\documentclass[onefignum,onetabnum]{siamart190516}

\usepackage[a4paper]{geometry}
\usepackage{mathrsfs}
\usepackage{amssymb}   
\usepackage{amsmath,amsfonts}
\usepackage{comment}
\usepackage[normalem]{ulem} \usepackage{graphicx}

\makeatletter

\newsiamremark{remark}{Remark}

\newcommand*{\bbE}{\mathbb{E}}

\newcommand*{\bbN}{\mathbb{N}}

\newcommand*{\bs}[1]{\boldsymbol{#1}}

\newcommand*{\bsx}{\boldsymbol{x}}\newcommand*{\bsy}{\boldsymbol{y}}

\newcommand*{\cN}{\mathcal{N}}

\DeclareMathOperator*{\argmin}{arg\,min}  \DeclareMathOperator{\spn}{span}
\DeclareMathOperator{\supp}{supp}

\usepackage{xcolor}
\definecolor{darkred}{RGB}{139,0,0}
\definecolor{darkgreen}{RGB}{30,130,80}
\definecolor{darkmagenta}{RGB}{139,0,139}
\definecolor{darkorange}{RGB}{180,60,0}
\definecolor{darkcyan}{RGB}{0,139,139}
\newcommand{\rv}[1]{{{#1}}}

\newcommand*{\ykcst}[1]{\relax\ifmmode\text{\textcolor{darkgreen}{\sout{\ensuremath{#1}}}}\else\textcolor{darkgreen}{\sout{#1}}\fi}

\usepackage[hang,flushmargin]{footmisc} 

\AtBeginDocument{
	
	\DeclareBibliographyDriver{unpublished}{\usebibmacro{bibindex}\usebibmacro{begentry}\usebibmacro{author}\setunit{\labelnamepunct}\newblock
		\usebibmacro{title}\newunit
		\printlist{language}\newunit\newblock
		\usebibmacro{byauthor}\newunit\newblock
		\printfield{howpublished}\newunit\newblock
		\newunit\newblock
		\usebibmacro{location+date}\newunit\newblock \usebibmacro{doi+eprint+url}\newunit\newblock
		\printfield{annotation}\nopunct
		\setunit{\bibpagerefpunct}\newblock
		\usebibmacro{pageref}\usebibmacro{finentry}}
	
	\urlstyle{same}
	
	\makeatletter
	\DeclareFieldFormat{eprint:arxiv}{{\tt arXiv\addcolon
			\ifhyperref
			{\href{http://arxiv.org/\abx@arxivpath/#1}{\nolinkurl{#1}\iffieldundef{eprintclass}
					{}
					{\mkbibbrackets{\thefield{eprintclass}}}}}
			{\nolinkurl{#1}
				\iffieldundef{eprintclass}
				{}
				{\mkbibbrackets{\thefield{eprintclass}}}}}\nopunct}
	\makeatother
	
	\DeclareBibliographyDriver{unpublished}{\usebibmacro{bibindex}\usebibmacro{begentry}\usebibmacro{author}\setunit{\labelnamepunct}\newblock
		\usebibmacro{title}\newunit
		\printlist{language}\newunit\newblock
		\usebibmacro{byauthor}\newunit\newblock
		\printfield{howpublished}\newunit\newblock
		\newunit\newblock
		\usebibmacro{location+date}\newunit\newblock \usebibmacro{doi+eprint+url}\newunit\newblock
		\printfield{annotation}\nopunct
		\setunit{\bibpagerefpunct}\newblock
		\usebibmacro{pageref}\usebibmacro{finentry}}
	
	\renewbibmacro{in:}{\ifentrytype{article}{}{\printtext{\bibstring{in}\intitlepunct}}}
	\DeclareFieldFormat[article]{volume}{\textbf{#1}\nopunct\space}
	\DeclareFieldFormat[article]{number}{({#1})}
	
	\DeclareFieldFormat[article,unpublished]{title}{#1} 
	\DeclareFieldFormat[article]{pages}{#1}
	
	\AtEveryBibitem{\ifentrytype{article}{
			\clearfield{month}\clearfield{day}\clearfield{number}}{}
		\ifentrytype{book}{
			\clearfield{month}\clearfield{day}\clearfield{url}\clearfield{pagetotal}\clearlist{location}\clearfield{volume}\clearfield{urldate}\clearfield{review}\clearfield{series}}{}
		\ifentrytype{collection}{
			\clearfield{url}\clearfield{urldate}\clearfield{review}\clearfield{series}}{}
		\ifentrytype{incollection}{
			\clearfield{url}\clearfield{urldate}\clearfield{review}\clearfield{series}}{}
	}
}

\makeatother

\usepackage[style=numeric,style=numeric,
sorting=nyt,maxbibnames=99,firstinits=true,url=false,doi=false,isbn=false]{biblatex}
\usepackage{xpatch}
\xpatchfieldformat{url}{\mkbibacro{URL}}{\space}{}{}

\addbibresource{kernel_density.bib}

\author{
	{Yoshihito Kazashi}\footnotemark[2] and 
	{Fabio Nobile}\footnotemark[2]}

\headers{Density estimation in RKHS}{Y. Kazashi and F. Nobile}

\title{Density estimation in RKHS\\ with application to Korobov spaces in high dimensions}

\author{Yoshihito Kazashi \thanks{
		CSQI, Institute of Mathematics, \'Ecole Polytechnique F\'ed\'erale de Lausanne, 1015 Lausanne, Switzerland (\email{y.kazashi@uni-heidelberg.de}).}\\
	\and
	Fabio Nobile \thanks{
		CSQI, Institute of Mathematics, \'Ecole Polytechnique F\'ed\'erale de Lausanne, 1015 Lausanne, Switzerland (\email{fabio.nobile@epfl.ch}).}\\
}





\begin{document}
	
	\maketitle
	
	\begin{abstract}
		A kernel method for estimating a probability density function (pdf)
		from an i.i.d.~sample drawn from such density is presented. Our estimator
		is a linear combination of kernel functions, the coefficients of which
		are determined by a linear equation. An error analysis for the mean
		integrated squared error is established in a general reproducing kernel
		Hilbert space setting. The theory developed is then applied to estimate
		pdfs belonging to weighted Korobov spaces, for which a dimension independent 
		convergence rate is established. Under a suitable smoothness assumption,
		our method attains a rate arbitrarily close to the optimal rate. Numerical
		results support our theory.
	\end{abstract}
	
	\begin{keywords}
		Density estimation, High-dimensional approximation, Kernel methods
	\end{keywords}
	
	\begin{AMS}
		62G07, 65J05, 65D40
	\end{AMS}
	
	\section{Introduction}
	In this paper, we propose and analyse a kernel-based method to approximate
	probability density functions on a domain of an arbitrary dimension.
	Density approximations have a long history \cite{Scott.D_2015_MultivariateDensityEstimation,Wand.M.P_Jones_1995_book_KernelSmoothing}.
	However, these classical methods typically suffer from the so-called
	curse of dimensionality, i.e.~their error convergence rates deteriorate
	in dimension, and thus their practical use is limited to relatively
	low-dimensional settings; see for example \cite{Scott.D_2015_MultivariateDensityEstimation,Wand.M.P_Jones_1995_book_KernelSmoothing}
	for more details. Over the past decade there has been increasing interest
	in studying random variables taking values in high-dimensional spaces.
	For instance, in Uncertainty Quantification applications one is typically
	interested to study statistics of the solution of a complex differential
	model that contains random components; see for example \cite{Babuska.I_Nobile_Tempone_2010_SIREV,Cohen.A_DeVore_2015_Acta,Kuo.F.Y_Nuyens_2016_FoCM_survey}.
	Our work is partly inspired by this type of applications.
	
	Let $(\Omega,\mathcal{F},\mathbb{P})$ be a probability space and
	$(D,\mathcal{B})$ be a measurable space. Given independent random
	variables $Y_{1},\dots,Y_{M}\colon\Omega\to D$ that follow an identical
	distribution defined by a density $f$ with respect to a measure $\mu$
	on $\mathcal{B}$, we aim to approximate $f$ with a positive definite
	kernel $K(\cdot,\cdot)\colon D\times D\to\mathbb{R}$. In particular,
	we seek for an approximation of the form
	\begin{equation}
		f(\cdot)\approx\sum_{k=1}^{N}c_{k}(\omega)K(x_{k},\cdot),\label{eq:approx-form}
	\end{equation}
	where $N$ is a positive integer, $X:=\{x_{k}\mid k=1,\dots,N\}$
	is a pre-selected point set in $D$, and the (random) coefficients
	$c_{1}(\omega),\dots,c_{N}(\omega)\in\mathbb{R}$, which depend on
	the sample $\boldsymbol{Y}(\omega):=(Y_{1}(\omega),\dots,Y_{M}(\omega))$,
	are determined by solving a linear equation. More precisely, we denote
	the approximate density of the form \eqref{eq:approx-form} by $f_{M,N;\boldsymbol{Y}}^{\lambda}$
	and construct it as the solution of the following problem: Find $f_{M,N;\boldsymbol{Y}}^{\lambda}\in V_{N}$
	such that 
	\begin{equation}
		\langle f_{M,N;\boldsymbol{Y}}^{\lambda},v\rangle_{L_{\mu}^{2}(D)}+\lambda\langle f_{M,N;\boldsymbol{Y}}^{\lambda},v\rangle_{K}=\frac{1}{M}\sum_{m=1}^{M}v(Y_{m})\quad\text{for all }v\in V_{N},\label{eq:eq-disc-Intro}
	\end{equation}
	where $\lambda>0$ is a ``regularization'' parameter, and \[V_{N}:=V_{N}(X):=\mathrm{span}\{K(x_{k},\cdot)\mid k=1,\dots,N\}.\]
	Here, $\langle\cdot,\cdot\rangle_{L_{\mu}^{2}(D)}$ is the $L^{2}$-inner
	product with respect to the measure $\mu$ on $(D,\mathcal{B})$,
	and $\langle\cdot,\cdot\rangle_{K}$ is the inner product of the reproducing
	kernel Hilbert space (RKHS) $\mathcal{N}_{K}$ defined by $K$. The
	set of points $X:=\{x_{k}\}_{k=1}^{N}\subset D$ determines the approximation
	space $V_{N}(X)$, and they should be chosen carefully. More details
	will be discussed in Section~\ref{sec:2}. 
	
	The approximation of type \eqref{eq:eq-disc-Intro} is a variant of
	what Hegland et al.~proposed in \cite{Hegland.M_Hooker_Roberts_2000_density},
	in which a standard finite element space was considered as the approximation
	space. As such, the method proposed in \cite{Hegland.M_Hooker_Roberts_2000_density}
	becomes infeasible when the dimension of $D$ is large. Peherstorfer
	et al.~\cite{Peherstorfer.B_Plfuege_Bungartz_2014_densityadaptiveSG}
	considered sparse-grid basis functions instead of the standard finite
	element basis functions in the method of \cite{Hegland.M_Hooker_Roberts_2000_density},
	to deal with larger dimensions, but the approximation error and its
	dependence on the dimension is not investigated. Roberts and Bolt
	\cite{Roberts.S.G_Bolt_2009_densitySparseGrids} considered a method
	in the same vein as~\cite{Peherstorfer.B_Plfuege_Bungartz_2014_densityadaptiveSG},
	but without the regularization term. They outline an error analysis,
	but their claimed estimates will result in a mean integrated squared
	error (MISE) decaying as $\mathcal{O}_{d}(|\log M|^{2d}M^{-4/5})$
	with a constant exponentially increasing in $d$. Another class of
	density estimators that have been developed e.g.~in \cite{Griebel.M_Hegland_2010_prior,Wong.M_Hegland_2013_MAP}
	is the MAP estimator. The methods in \cite{Griebel.M_Hegland_2010_prior,Wong.M_Hegland_2013_MAP}
	involve minimising a non-linear functional via Newton's method, whereas
	our method only involves solving the linear equation \eqref{eq:eq-disc-Intro}.
	Moreover, the method in~\cite{Griebel.M_Hegland_2010_prior} is limited
	to three dimension as the dimension of the domain of the target density,
	and in~\cite{Wong.M_Hegland_2013_MAP} the approximation error is
	not investigated. In contrast to these works, as we will see later
	in Section~\ref{sec:Example}, under suitable smoothness/periodicity
	assumptions on the density $f$ we will establish a faster, \emph{dimension-independent}
	error decay in terms of MISE.
	
	To analyse the error of our method, we first derive a general theory
	in a RKHS setting. Under the assumption that the target density function
	is in the RKHS associated with the kernel $K$, we will establish
	an MISE bound. It turns out that the bias can be bounded by an orthogonal
	projection error plus a regularization term, while the variance decays
	at a rate arbitrarily close to $M^{-1}$ provided that $f$ is sufficiently
	``smooth''. More precisely, we have
	\[
	(\text{MISE})\leq\|\mathscr{P}_{N}f-f\|_{L_{\mu}^{2}(D)}^{2}+\mathcal{O}(\lambda^{2}+M^{-1}\lambda^{-\tau}),
	\]
	where $\mathscr{P}_{N}$ is the $\mathcal{N}_{K}$-orthogonal projection
	onto $V_{N}$, and given that $f$ is in a sufficiently small RKHS,
	$\tau\in(0,1]$ can be taken arbitrarily small; see Theorem~\ref{thm:general-summary}.
	Since a large number of projection error estimates are readily available
	for various kernels, this estimate will directly provide estimates
	on the MISE. 
	For $f$ smooth, the projection error typically decays
	fast in $N$. Hence, in such cases, for the optimal choice of $N$
	and $\lambda$ depending on the sample size $M$, the MISE decays
	at a rate arbitrarily close to $M^{-1}$. 
	Such a rate may be called
	\emph{near-optimal}, since, in view of the lower bound in \cite{Boyd.D.W_Steele_1978_LowerBoundsNonparametricDensityEstimation},
	the rate $M^{-1}$ is optimal.
	
	We will demonstrate the strength of this theory through an example.
	We will apply the theory to the so-called Korobov kernel and the corresponding
	space, which is, roughly speaking, a Sobolev space with periodicity.
	Given that the target density is in this space, it turns out that
	the bias can be bounded by the interpolation error up to a regularization
	term, where the interpolation points are $\{x_{k}\}_{k=1}^{N}$. Note
	that the kernel interpolation is optimal among all approximations
	that use only the same function values of $f$, in the sense that
	it gives the least possible worst-case error in any norm that is no
	stronger than the RKHS-norm; see for example \cite[Theorem 2,][]{KaarniojaEtAl.V_2021_FastApproximationPeriodic}
	for a proof of this optimality result for the same setting as this
	paper. As such, the interpolation error can be bounded by the approximation
	error delivered by other algorithms that use evaluations at the same
	set of points $\{x_{k}\}_{k=1}^{N}$. Approximation errors in Korobov
	spaces by kernel interpolation have been extensively studied \cite{Zeng.X_Leung_Hickernell_2006_SplineLattice,Zeng.X_Kritzer_Hickernell_2009_SplineLatticeDigitalNets,KaarniojaEtAl.V_2021_FastApproximationPeriodic}.
	In this paper, following \cite{KaarniojaEtAl.V_2021_FastApproximationPeriodic},
	as $\{x_{k}\}_{k=1}^{N}$ we choose the so-called rank-$1$ lattice
	points. Moreover, like \cite{KaarniojaEtAl.V_2021_FastApproximationPeriodic},
	we consider target (density) functions with a favourable anisotropy
	structure by assuming that they are in a \textit{weighted} space.
	By exploiting this structure, we establish convergence-rate estimates
	that are \emph{independent of the dimension}. 
	\rv{When the smoothness parameter of the Korobov space is an even integer, the rate established turns out to be \emph{asymptotically minimax} up to an arbitrarily small $\epsilon>0$.}  
	Moreover, the lattice
	structure gives a circulant matrix for the linear equation, which
	makes solving the equation fast. Numerical results support our theory.
	
	Random variables having a periodic density function arise for example as circular observations. Although they are important in many applications such as biology, geology, and political science \cite{TopicsCircularStatBook,Agostinelli.C_2007_RobustEstimationCircular,GillEtAl.J_2010_CircularDataPolitical,NodehiEtAl.A_2021_EstimationParametersMultivariate}, estimating such density function in high dimensional setting remains a challenge \cite{DiMarzioEtAl.M_2011_KernelDensityEstimation}.
	Moreover, we note that, if the target density is compactly supported and smooth,
	then we can always normalize the sample domain and assume a periodic 
	extension. 
	We mention several other theoretical results, although for
	methods different from ours, on periodic density estimations;  see for
	example \cite[Chapter 12]{Devroye.L_Gyoenfi_1985_book_NonparametricDensity},
	\cite{Wahba.G_1981_DatabasedOptimalSmoothing}, and as a special case,
	compactly supported density functions \cite{Walter.G.G_1977_PropertiesHermite}.
	In particular, periodic Sobolev density functions have been considered
	in \cite{Wahba.G_1981_DatabasedOptimalSmoothing} for the one dimensional
	case, where the author suggests that in many applications it might
	be preferable to assume the true density has compact support and to
	scale the data to the interior of $[0,1]$. We note that the paper
	\cite{Wahba.G_1981_DatabasedOptimalSmoothing} briefly addresses the
	multi-dimensional case. Their results do not exploit the anisotropic
	structure of the target density function, and the MISE rate proved
	there severely suffers from the curse of dimensionality, unlike ours.
	
	In passing, we note that our approximation \eqref{eq:approx-form}
	does not, in general, give a non-negative density function nor does
	it integrate to $1$. We remind the reader that satisfying these conditions
	is already an issue in the standard kernel density estimation in one
	dimension; see for example \cite{Terrel.G.R_Scott_1980_ImprovingConvergence}
	for discussions on relaxing these conditions to obtain a MISE convergence
	rate faster than $O(M^{-4/5})$, where $M$ is the sample size.
	
	The rest of the paper is organized as follows. Section~\ref{sec:2}
	introduces the problem setting and our method. An error analysis in
	a RKHS setting is presented in Section~\ref{sec:gen-theory}. Then
	in Section~\ref{sec:Example} we will apply this theory to the Korobov
	space setting, and establish a dimension independent MISE decay rate.
	Numerical results in Section~\ref{sec:numerical} support our theory,
	and Section~\ref{sec:Conclusions} concludes the paper.
	
	\section{Density approximation using kernels\label{sec:2}}
	
	\subsection{Reproducing kernel Hilbert space}
	
	Let $(D,\mathcal{B},\mu)$ be a measure space, and let $(\mathcal{N}_{K},\langle\cdot,\cdot\rangle_{K},\|\cdot\|_{K})$
	denote the reproducing kernel Hilbert space (RKHS) associated with
	the positive definite kernel $K\colon D\times D\to\mathbb{R}$\rv{, 
		i.e.,  $K(x,x')=K(x',x)$ for all $x,x'\in D$, and for any $m\in\mathbb{N}$, $t_j,t_k\in\mathbb{R}$, and $x_j,x_k\in D$, $j,k=0,\dots,m$, we have 
		$\sum_{j,k=0}^m t_j K(x_j,x_k) t_k \geq  0$.
	}
	This
	kernel may possibly be unbounded, but we assume $\int_{D}\sqrt{K(\rv{x},\rv{x})}\mathrm{d}\mu(\rv{x})<\infty$
	and $\int_{D}K(\rv{x},\rv{x})\mathrm{d}\mu(\rv{x})<\infty$. The first condition
	ensures
	\begin{align*}
		\int_{D}|K(x,\rv{x'})|\mathrm{d}\mu(\rv{x'}) & =\int_{D}|\langle K(\cdot,x),K(\cdot,\rv{x'})\rangle_{K}|\mathrm{d}\mu(\rv{x'})\\
		& \leq\sqrt{K(x,x)}\int_{D}\sqrt{K(\rv{x'},\rv{x'})}\mathrm{d}\mu(\rv{x'})\quad\text{for any }x\in D
	\end{align*}
	so that we have $f_{M,N;\boldsymbol{Y}}^{\lambda}\in L_{\mu}^{1}(D)$,
	while the second ensures that every $g\in\mathcal{N}_{K}$ is $\mu$-square
	integrable:
	\[
	\int_{D}|g(\rv{x})|^{2}\mathrm{d}\mu(\rv{x})\leq\|g\|_{K}^{2}\int_{D}\|K(\cdot,\rv{x})\|_{K}^{2}\mathrm{d}\mu(\rv{x})=\|g\|_{K}^{2}\int_{D}K(\rv{x},\rv{x})\mathrm{d}\mu(\rv{x})<\infty.
	\]
	Moreover, throughout this paper we will assume that $K$ admits a
	representation
	\begin{equation}
		K(x,x')=\sum_{\ell=0}^{\infty}\beta_{\ell}\varphi_{\ell}(x)\varphi_{\ell}(x')\quad x,x'\in D,\label{eq:K-series}
	\end{equation}
	with a positive sequence $(\beta_{\ell})_{\ell=0}^{\infty}\subset(0,\infty)$
	converging to $0$, and a complete orthonormal system $\{\sqrt{\beta_{\ell}}\varphi_{\ell}\}$
	of $\mathcal{N}_{K}$ such that the series is absolutely (point-wise)
	convergent and that $\{\varphi_{\ell}\}$ is an orthonormal system
	of $L_{\mu}^{2}(D)$. Then, the inner product for $\mathcal{N}_{K}$
	may be represented by
	\[
	\langle f,g\rangle_{K}=\sum_{\ell=0}^{\infty}\frac{\langle f,\varphi_{\ell}\rangle_{L_{\mu}^{2}(D)}\,\langle g,\varphi_{\ell}\rangle_{L_{\mu}^{2}(D)}}{\beta_{\ell}},
	\]
	where we used the notation $\langle u,w\rangle_{L_{\mu}^{2}(D)}:=\int_{D}u(x)v(x)\mathrm{d}\mu(x)$
	for the $L_{\mu}^{2}(D)$-inner product.
	
	For example, suppose that $(D,\mathcal{B})$ is a Hausdorff topological
	space with the corresponding Borel $\sigma$-algebra, $\mu$ is strictly
	positive, i.e.~$\mu(O)>0$ for any nonempty open set $O\subset D$,
	and that $K\colon D\times D\to\mathbb{R}$ is continuous. Then, the
	kernel $K$ admits a representation \eqref{eq:K-series} with an absolutely
	convergent series. Indeed, the condition $\int_{D}K(x,x)\mathrm{d}\mu(x)<\infty$
	ensures that $\mathcal{N}_{K}$ is compactly embedded into $L_{\mu}^{2}(D)$
	\cite[Lemma 2.3]{SteinwartEtAl.I_2012_MercerTheoremGeneral}. In turn,
	\cite[Lemma 2.2]{SteinwartEtAl.I_2012_MercerTheoremGeneral} implies
	that the integral operator $T_{K}\colon L_{\mu}^{2}(D)\to L_{\mu}^{2}(D)$
	defined by
	\[
	T_{K}g:=\int_{D}K(\cdot,x)g(x)\,\mathrm{d}\mu(x),\quad g\in L_{\mu}^{2}(D)
	\]
	is compact, and thus we can use representatives of the corresponding
	eigensystem to construct the representation \eqref{eq:K-series} (see
	\cite[Lemma 2.12]{SteinwartEtAl.I_2012_MercerTheoremGeneral} and
	\cite[Corollary 3.5]{SteinwartEtAl.I_2012_MercerTheoremGeneral}).
	We defer to \cite{SteinwartEtAl.I_2012_MercerTheoremGeneral} for
	more general conditions that imply the representation \eqref{eq:K-series}.
	
	For later use, we introduce the notation
	\begin{equation}
		\langle u,w\rangle_{\lambda}:=\langle u,w\rangle_{L_{\mu}^{2}(D)}+\lambda\langle u,w\rangle_{K}\quad\text{for }u,w\in\mathcal{N}_{K},\label{eq:ip-lambda}
	\end{equation}
	where $\lambda>0$ is a parameter. The bilinear form $\langle\cdot,\cdot\rangle_{\lambda}$
	is an inner product on $\mathcal{N}_{K}$ and $\|\cdot\|_{\lambda}:=\sqrt{\rv{\langle}\cdot,\cdot\rv{\rangle}_{\lambda}}$
	is equivalent to $\|\cdot\|_{K}$: for $v,w\in\mathcal{N}_{K}$ we
	have $\langle v,v\rangle_{\lambda}\geq\lambda\|v\|_{K}$ and $|\langle v,w\rangle_{\lambda}|\leq(\sup_{\ell\geq0}\beta_{\ell}+\lambda)\|v\|_{K}\|w\|_{K}$.
	
	We also introduce a continuum scale of nested Hilbert spaces related
	to $\mathcal{N}_{K}$. For $\tau>0$ we denote by $\cN_{K}^{\tau}$
	the normed space $\mathcal{N}_{K}^{\tau}:=\{v\in L_{\mu}^{2}(D)\mid\,\|v\|_{\mathcal{N}_{K}^{\tau}}<\infty\}$
	with $\|v\|_{\mathcal{N}_{K}^{\tau}}:=\big(\sum_{\ell=0}^{\infty}\beta_{\ell}^{-\tau}|\langle v,\varphi_{\ell}\rangle_{L_{\mu}^{2}(D)}|^{2}\big)^{1/2}$,
	where $(\beta_{\ell})_{\ell=0}^{\infty}\subset(0,\infty)$ is as in~\eqref{eq:K-series}.
	Note that if $\tau>0$ is such that 
	\begin{equation}
		\sum_{\ell=0}^{\infty}\beta_{\ell}^{\tau}\varphi_{\ell}(x)^{2}<\infty\qquad\text{for all }\ x\in D\label{eq:cond-rep}
	\end{equation}
	then the series $\sum_{\ell=0}^{\infty}\langle v,\varphi_{\ell}\rangle_{L_{\mu}^{2}(D)}\varphi_{\ell}(x)$
	for $v\in\mathcal{N}_{K}^{\tau}$ is (point-wise) absolutely convergent
	for any $x\in D$. In this case, we understand $v\in\mathcal{N}_{K}^{\tau}\subset L_{\mu}^{2}(D)$
	as the representative of the corresponding equivalence class in $L_{\mu}^{2}(D)$
	specified by this series. Then, we have $\mathcal{N}_{K}^{1}=\mathcal{N}_{K}$.
	
	We denote by $(\mathcal{N}_{K}^{\tau})'$ the topological dual space
	of $\mathcal{N}_{K}^{\tau}$. Moreover, we consider the normed space
	\begin{align*}
		&\cN_{K}^{-\tau}\\
		&:=\left\{\Psi  \colon\mathcal{N}_{K}^{\tau}\to\mathbb{R},\Psi(v):=\sum_{\ell=0}^{\infty}\Psi_{\ell}\langle\varphi_{\ell},v\rangle_{L_{\mu}^{2}(D)}
		\,\left\vert \,\begin{array}{l}
			(\Psi_{\ell})_{\ell\geq0}\subset\mathbb{R}\text{ such that}\\
			\|\Psi\|_{\cN_{K}^{-\tau}}<\infty
		\end{array}\right\}, 
		\right.
	\end{align*}
	where $\|\Psi\|_{\cN_{K}^{-\tau}}:=\big(\sum_{\ell=0}^{\infty}\beta_{\ell}^{\tau}\Psi_{\ell}^{2}\big)^{1/2}$.
	We will use the following characterisation of $(\cN_{K}^{\tau})'$.
	\begin{proposition}
		\label{prop:dual-isom}For $\tau\in(0,1]$, the dual space $(\cN_{K}^{\tau})'$
		equipped with the functional norm is isometrically isomorphic to $\cN_{K}^{-\tau}$.
	\end{proposition}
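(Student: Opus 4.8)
The plan is to show that the natural map $\Phi\colon\cN_{K}^{-\tau}\to(\cN_{K}^{\tau})'$ sending $\Psi$ to itself, viewed as a linear form on $\cN_{K}^{\tau}$, is a well-defined linear isometric bijection. The guiding observation is that the coefficient map $v\mapsto(\langle v,\varphi_{\ell}\rangle_{L_{\mu}^{2}(D)})_{\ell\geq0}$ is a (semi-)norm-preserving identification of $\cN_{K}^{\tau}$ with the weighted sequence space of squared norm $\sum_{\ell}\beta_{\ell}^{-\tau}|v_{\ell}|^{2}$, whereas $\cN_{K}^{-\tau}$ is, by its very definition, the weighted sequence space of squared norm $\sum_{\ell}\beta_{\ell}^{\tau}|\Psi_{\ell}|^{2}$; the proposition is then the classical duality between these two weighted $\ell^{2}$ spaces, transported back to the function-space setting.

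First I would verify that $\Phi$ is well defined with $\|\Phi\Psi\|_{(\cN_{K}^{\tau})'}\leq\|\Psi\|_{\cN_{K}^{-\tau}}$: for $v\in\cN_{K}^{\tau}$ and $\Psi\in\cN_{K}^{-\tau}$, writing $\Psi_{\ell}\langle\varphi_{\ell},v\rangle_{L_{\mu}^{2}(D)}=(\beta_{\ell}^{\tau/2}\Psi_{\ell})\,(\beta_{\ell}^{-\tau/2}\langle\varphi_{\ell},v\rangle_{L_{\mu}^{2}(D)})$ and applying the Cauchy--Schwarz inequality in $\ell^{2}$ shows that the series defining $\Psi(v)$ converges absolutely and that $|\Psi(v)|\leq\|\Psi\|_{\cN_{K}^{-\tau}}\|v\|_{\cN_{K}^{\tau}}$; linearity of $v\mapsto\Psi(v)$ is clear. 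For the reverse inequality I would test against near-optimal elements: assuming $\Psi\neq0$, set $v^{(n)}:=\sum_{\ell=0}^{n}\beta_{\ell}^{\tau}\Psi_{\ell}\varphi_{\ell}$, a finite linear combination of the $\varphi_{\ell}$ and hence an element of $\cN_{K}^{\tau}$. Orthonormality of $\{\varphi_{\ell}\}$ in $L_{\mu}^{2}(D)$ gives $\langle v^{(n)},\varphi_{m}\rangle_{L_{\mu}^{2}(D)}=\beta_{m}^{\tau}\Psi_{m}$ for $m\leq n$ and $0$ otherwise, so that $\Psi(v^{(n)})=\|v^{(n)}\|_{\cN_{K}^{\tau}}^{2}=\sum_{\ell=0}^{n}\beta_{\ell}^{\tau}\Psi_{\ell}^{2}$ and therefore $|\Psi(v^{(n)})|/\|v^{(n)}\|_{\cN_{K}^{\tau}}=(\sum_{\ell=0}^{n}\beta_{\ell}^{\tau}\Psi_{\ell}^{2})^{1/2}\to\|\Psi\|_{\cN_{K}^{-\tau}}$ as $n\to\infty$. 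Combining the two estimates gives $\|\Phi\Psi\|_{(\cN_{K}^{\tau})'}=\|\Psi\|_{\cN_{K}^{-\tau}}$; in particular $\Phi$ is isometric, hence injective.

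Surjectivity is the last point. Given $L\in(\cN_{K}^{\tau})'$, put $\Psi_{\ell}:=L(\varphi_{\ell})$ and evaluate $L$ on the elements $v^{(n)}:=\sum_{\ell=0}^{n}\beta_{\ell}^{\tau}\Psi_{\ell}\varphi_{\ell}$: linearity yields $L(v^{(n)})=\sum_{\ell=0}^{n}\beta_{\ell}^{\tau}\Psi_{\ell}^{2}=\|v^{(n)}\|_{\cN_{K}^{\tau}}^{2}$, so $(\sum_{\ell=0}^{n}\beta_{\ell}^{\tau}\Psi_{\ell}^{2})^{1/2}\leq\|L\|_{(\cN_{K}^{\tau})'}$ for all $n$, and hence $(\Psi_{\ell})_{\ell\geq0}$ determines an element $\Psi\in\cN_{K}^{-\tau}$. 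To obtain $L=\Phi\Psi$ I would use density of finite combinations of the $\varphi_{\ell}$ in $\cN_{K}^{\tau}$: for $v\in\cN_{K}^{\tau}$ the truncations $u_{n}:=\sum_{\ell=0}^{n}\langle v,\varphi_{\ell}\rangle_{L_{\mu}^{2}(D)}\varphi_{\ell}$ satisfy $\|v-u_{n}\|_{\cN_{K}^{\tau}}^{2}=\sum_{\ell>n}\beta_{\ell}^{-\tau}|\langle v,\varphi_{\ell}\rangle_{L_{\mu}^{2}(D)}|^{2}\to0$, while $L(u_{n})=\sum_{\ell=0}^{n}\Psi_{\ell}\langle\varphi_{\ell},v\rangle_{L_{\mu}^{2}(D)}\to\Psi(v)$ and $L(u_{n})\to L(v)$ by continuity of $L$; thus $L(v)=\Psi(v)$. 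The one point that warrants care is that $\|\cdot\|_{\cN_{K}^{\tau}}$ is only a seminorm when $\{\varphi_{\ell}\}$ is not complete in $L_{\mu}^{2}(D)$, but this is harmless: boundedness of a functional forces it to annihilate the kernel of the seminorm, the truncation estimate above is stated purely in terms of this seminorm, and $\Phi$ only ever records the coefficient sequence of a functional. I do not expect a genuine obstacle; the bookkeeping around the test elements $v^{(n)}$ is the crux.
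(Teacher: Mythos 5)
Your proposal is correct and follows essentially the same route as the paper: Cauchy--Schwarz for the embedding $\cN_{K}^{-\tau}\hookrightarrow(\cN_{K}^{\tau})'$ with $\|\Psi\|_{(\cN_{K}^{\tau})'}\leq\|\Psi\|_{\cN_{K}^{-\tau}}$, and then testing a given functional against the finite sums $\sum_{\ell=0}^{n}\beta_{\ell}^{\tau}\Psi_{\ell}\varphi_{\ell}$ to get the reverse inequality and surjectivity, which is exactly the paper's key computation. Your explicit handling of the seminorm degeneracy when $\{\varphi_{\ell}\}$ is not complete in $L_{\mu}^{2}(D)$ is a point the paper absorbs by convention (identifying $v$ with its series representative), but it does not change the argument.
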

	\begin{proof}
		First, we show that $\cN_{K}^{-\tau}$ is a vector subspace of $(\cN_{K}^{\tau})'$.
		Indeed, for $\Psi\in\cN_{K}^{-\tau}$ and $v\in\cN_{K}^{\tau}$ we
		have
		\begin{equation}
			|\Psi(v)|\leq\biggl(\sum_{\ell=0}^{\infty}\beta_{\ell}^{\tau}\Psi_{\ell}^{2}\biggr)^{1/2}\|v\|_{\mathcal{N}_{K}^{\tau}}=\|\Psi\|_{\mathcal{N}_{K}^{-\tau}}\|v\|_{\mathcal{N}_{K}^{\tau}}<\infty,\label{eq:issubspace}
		\end{equation}
		\sloppy{so that $\|\Psi\|_{(\cN_{K}^{\tau})'}\leq\|\Psi\|_{\mathcal{N}_{K}^{-\tau}}$
			and $\cN_{K}^{-\tau}\subset(\cN_{K}^{\tau})'$. Next, take $\Phi\in(\mathcal{N}_{K}^{\tau})'$
			and $v=\sum_{\ell=0}^{\infty}\langle v,\varphi_{\ell}\rangle_{L_{\mu}^{2}(D)}\varphi_{\ell}\in\mathcal{N}_{K}^{\tau}$
			arbitrarily, where we note that $v\in\mathcal{N}_{K}^{\tau}$ implies
			that this series is convergent in $\mathcal{N}_{K}^{\tau}$. Then,
			the continuity of $\Phi$ implies $\Phi(v)=\sum_{\ell=0}^{\infty}\Phi(\varphi_{\ell})\langle v,\varphi_{\ell}\rangle_{L_{\mu}^{2}(D)}\in\mathbb{R}$.
			To show $\Phi\in\mathcal{N}_{K}^{-\tau}$, we note that for any $L\in\mathbb{N}$
			we have}
		\begin{align*}
			0\leq\sum_{\ell=0}^{L}\beta_{\ell}^{\tau}|\Phi(\varphi_{\ell})|^{2} & =\sum_{\ell=0}^{L}\beta_{\ell}^{\tau}\Phi(\varphi_{\ell})\,\Phi(\varphi_{\ell})=\Phi\Bigl(\sum_{\ell=0}^{L}\beta_{\ell}^{\tau}\Phi(\varphi_{\ell})\varphi_{\ell}\Bigr)\\
			& \leq\|\Phi\|_{(\mathcal{N}_{K}^{\tau})'}\biggl\|\sum_{\ell=0}^{L}\beta_{\ell}^{\tau}\Phi(\varphi_{\ell})\varphi_{\ell}\biggr\|_{\mathcal{N}_{K}^{\tau}}\\
			& =\|\Phi\|_{(\mathcal{N}_{K}^{\tau})'}\biggl(\sum_{\ell=0}^{L}\beta_{\ell}^{-\tau}\bigl|\beta_{\ell}^{\tau}\Phi(\varphi_{\ell})\bigr|^{2}\biggr)^{1/2},
		\end{align*}
		and thus $\|\Phi\|_{\cN_{K}^{-\tau}}=\big(\sum_{\ell=0}^{\infty}\beta_{\ell}^{\tau}|\Phi(\varphi_{\ell})|^{2}\big)^{1/2}\leq\|\Phi\|_{(\mathcal{N}_{K}^{\tau})'}<\infty$.
		Together with \eqref{eq:issubspace}, we conclude $(\cN_{K}^{\tau})'=\cN_{K}^{-\tau}$
		and $\|\cdot\|_{\cN_{K}^{-\tau}}=\|\cdot\|_{(\mathcal{N}_{K}^{\tau})'}$,
		and thus the identity operator is the sought isomorphism.
	\end{proof}
	
	\subsection{The kernel estimator}
	
	Let $Y_{1},\dots,Y_{M}\colon\Omega\to D$ be independent random variables
	that follow the distribution defined by a density $f\in\mathcal{N}_{K}$
	with respect to $\mu$, i.e.~$Y_{j}$ follows $\mathbb{P}_{Y}(A)=\int_{A}f(y)\mathrm{d}\mu(y)$,
	$A\in\mathcal{B}$. We are after an approximation to $f$ of the form
	$f(\cdot)\approx\sum_{n=1}^{N}c_{n}K(x_{n},\cdot)$ with $K(\cdot,\cdot)$
	given by \eqref{eq:K-series}, where $X=\{x_{1},\dots,x_{N}\}\subset D$
	is a set of carefully chosen points. The choice of $X$ is important,
	since given $K(\cdot,\cdot)$, it determines the approximation space
	\begin{equation}
		V_{N}:=V_{N}(X):=\spn\{K(x_{j},\cdot)\mid j=1,\dots,N\}.\label{eq:def-VN}
	\end{equation}
	
	The starting point of our method is the following ideal minimization
	problem:
	\begin{equation}
		f_{N}^{\lambda}:=\argmin_{v\in V_{N}}\tilde{J}_{\lambda}(v):=\argmin_{v\in V_{N}}\biggl[\frac{1}{2}\|v-f\|_{L_{\mu}^{2}(D)}^{2}+\frac{\lambda}{2}\|v\|_{K}^{2}\biggr].\label{eq:problem-ideal}
	\end{equation}
	This method is not practical as the evaluation of $\tilde{J}_{\lambda}$
	requires full knowledge of the target density $f$. Let us rewrite
	$\tilde{J}_{\lambda}$ as
	\begin{align*}
		\tilde{J}_{\lambda}(v) & =\frac{1}{2}\|v\|_{L_{\mu}^{2}(D)}^{2}-\langle v,f\rangle_{L_{\mu}^{2}(D)}+\frac{1}{2}\|f\|_{L_{\mu}^{2}(D)}^{2}+\frac{\lambda}{2}\|v\|_{K}^{2}\\
		& =J_{\lambda}(v)+\frac{1}{2}\|f\|_{L_{\mu}^{2}(D)}^{2},
	\end{align*}
	with $J_{\lambda}(v)=\frac{1}{2}\|v\|_{L_{\mu}^{2}(D)}^{2}-\langle v,f\rangle_{L_{\mu}^{2}(D)}+\frac{\lambda}{2}\|v\|_{K}^{2}$.
	Then, since $\frac{1}{2}\|f\|_{L_{\mu}^{2}(D)}^{2}$ is a constant
	function in $v$, we have
	\[
	f_{N}^{\lambda}=\argmin_{v\in V_{N}}\tilde{J}_{\lambda}(v)=\argmin_{v\in V_{N}}J_{\lambda}(v).
	\]
	Now we approximate $J_{\lambda}(v)$ using the i.i.d.~sample $\boldsymbol{Y}:=(Y_{1},\dots,Y_{M})\sim f\mathrm{d}\mu$,
	which yields
	\[
	f_{M,N;\boldsymbol{Y}}^{\lambda}:=\argmin_{v\in V_{N}}J_{M,\lambda}(v):=\argmin_{v\in V_{N}}\biggl[\frac{1}{2}\|v\|_{L_{\mu}^{2}(D)}^{2}+\frac{\lambda}{2}\|v\|_{K}^{2}-\frac{1}{M}\sum_{m=1}^{M}v(Y_{m})\biggr].
	\]
	This is the minimization problem we solve, which can be equivalently
	written as: Find $f_{M,N;\boldsymbol{Y}}^{\lambda}\in V_{N}$ such
	that 
	\begin{equation}
		\langle f_{M,N;\boldsymbol{Y}}^{\lambda},v\rangle_{\lambda}=\frac{1}{M}\sum_{m=1}^{M}v(Y_{m})\quad\text{for all }v\in V_{N},\label{eq:eq-disc}
	\end{equation}
	where $\langle\cdot,\cdot\rangle_{\lambda}$ is as in \eqref{eq:ip-lambda};
	see for example \cite[Theorem 6.1-1]{Ciarlet.P_2013_book_linear_nonlinear}
	for this equivalence. To see that this problem is well defined, note
	that $\Delta_{\boldsymbol{Y}}\colon\mathcal{N}_{K}\to\mathbb{R}$
	defined by
	\begin{equation}
		\Delta_{\boldsymbol{Y}(\omega)}(v):=\frac{1}{M}\sum_{m=1}^{M}v(Y_{m})\label{eq:def-Delta}
	\end{equation}
	is a linear continuous functional on $\mathcal{N}_{K}$, hence on
	$V_{N}$, for any $\omega\in\Omega$, since $\Delta_{\boldsymbol{Y}(\omega)}$
	is the sum of point evaluation functionals on the RKHS $\mathcal{N}_{K}$.
	Hence, in view of the Riesz representation theorem, the solution $f_{M,N;\boldsymbol{Y}}^{\lambda}$
	exists and is unique in $V_{N}$. The corresponding coefficients $\boldsymbol{c}=(c_{1},\dots,c_{n})^{\top}$
	satisfy the linear system
	\begin{equation}
		\mathbf{A}\boldsymbol{c}=\mathbf{b},\label{eq:eq-matrix-vector}
	\end{equation}
	where the matrix $\mathbf{A}$ is given by $\mathbf{A}_{jk}=\langle K(x_{j},\cdot),K(x_{k},\cdot)\rangle_{L_{\mu}^{2}(D)}+\lambda K(x_{j},x_{k})$,
	for $j,k=1,\dots,N$ and the vector $\mathbf{b}$ is given by $\mathbf{b}_{j}=\frac{1}{M}\sum_{m=1}^{M}K(x_{j},Y_{m}(\omega))$,
	for $j=1,\dots,N$. Notice that, as we mentioned before, the solution
	of \eqref{eq:eq-disc} exists uniquely in $V_{N}$, and thus $\mathbf{b}$
	is in the columns space of $\mathbf{A}$. Nevertheless, the equation
	\eqref{eq:eq-matrix-vector} may not be uniquely solvable if the functions
	$K(x_{j},\cdot)$, $j=1,\dots,N$ are linearly dependent. In such
	a case where $\mathbf{A}$ is singular, we take $\boldsymbol{c}\in\mathbb{R}^{N}$
	such that $\boldsymbol{c}\in(\text{null}(\mathbf{A}))^{\perp}$, where
	the orthogonal complement is taken with respect the Euclidean inner
	product.
	
	The resulting mapping $\mathbf{b}\mapsto\boldsymbol{c}$ is continuous,
	and since $\omega\mapsto\mathbf{b}(\omega)$ is $\mathcal{F}/\mathcal{B}(\mathbb{R}^{N})$-measurable,
	where $\mathcal{B}(\mathbb{R}^{N})$ is the Borel $\sigma$-algebra
	of $\mathbb{R}^{N}$, $\omega\mapsto\boldsymbol{c}(\omega)$ is also
	$\mathcal{F}/\mathcal{B}(\mathbb{R}^{N})$-measurable.
	
	Taking the expectation on both sides of \eqref{eq:eq-disc} leads
	to
	\begin{equation}
		\langle\bbE[f_{M,N;\boldsymbol{Y}}^{\lambda}],v\rangle_{\lambda}=\bbE[v(Y_{1})]=\int_{D}f(y)v(y)\mathrm{d}\mu(y)\quad\text{for all }v\in V_{N},\label{eq:eq-cont}
	\end{equation}
	and thus $f_{M,N;\boldsymbol{Y}}^{\lambda}$ is an estimator such
	that its expectation is the solution to the variational problem \eqref{eq:problem-ideal},
	i.e.~$\bbE[f_{M,N;\boldsymbol{Y}}^{\lambda}]=f_{N}^{\lambda}$.
	
	\section{General error estimate\label{sec:gen-theory}}
	
	We measure the error in terms of the mean integrated squared error
	(MISE):
	\begin{align}
		\bbE\biggl[\int_{D} & |f_{M,N;\boldsymbol{Y}}^{\lambda}(x)-f(x)|^{2}\mathrm{d}\mu(x)\biggr]\nonumber \\
		& =\|\bbE[f_{M,N;\boldsymbol{Y}}^{\lambda}]-f\|_{L_{\mu}^{2}(D)}^{2}+\bbE[\|f_{M,N;\boldsymbol{Y}}^{\lambda}-\bbE[f_{M,N;\boldsymbol{Y}}^{\lambda}]\|_{L_{\mu}^{2}(D)}^{2}].\label{eq:MISE-decomp}
	\end{align}
	In the following, we will analyse the first term (hereafter called
	the \emph{squared bias} term) and the second term (hereafter called
	the \emph{variance} term) separately.
	
	\subsection{Bias estimate}
	
	To study the bias, we introduce the $\mathcal{N}_{K}$-orthogonal
	projection $\mathscr{P}_{N}g$ of $g$ from $\mathcal{N}_{K}$ onto
	$V_{N}$ and relate the bias with the projection error. If $K$ is
	a strictly positive definite kernel, the kernel interpolation $\mathscr{I}_{N}g\in V_{N}$
	of $g\in\mathcal{N}_{K}$ that interpolates $g$ at distinct $x_{1},\dots,x_{N}$
	can be uniquely determined, and it is well known that $\mathscr{I}_{N}g=\mathscr{P}_{N}g$.
	Since a large number of interpolation error estimates are readily
	available for various kernels, this will directly provide estimates
	on the bias. For more details on the kernel interpolation, see for
	example~\cite{Wendland.H_2004_book}.
	
	Trivially, we have
	\begin{align}
		\|\bbE[f_{M,N;\boldsymbol{Y}}^{\lambda}]-f\|_{L_{\mu}^{2}(D)}^{2}=\langle\bbE[f_{M,N;\boldsymbol{Y}}^{\lambda}] & -f,\bbE[f_{M,N;\boldsymbol{Y}}^{\lambda}]-\mathscr{P}_{N}f\rangle_{L_{\mu}^{2}(D)}\nonumber \\
		& +\langle\bbE[f_{M,N;\boldsymbol{Y}}^{\lambda}]-f,\mathscr{P}_{N}f-f\rangle_{L_{\mu}^{2}(D)},\label{eq:bias-decomp}
	\end{align}
	the second term of which can be bounded as
	\begin{align}
		\langle\bbE[f_{M,N;\boldsymbol{Y}}^{\lambda}]&-f,\mathscr{P}_{N}f-f\rangle_{L_{\mu}^{2}(D)}\notag\\
		& \leq\frac{1}{2}\|\bbE[f_{M,N;\boldsymbol{Y}}^{\lambda}]-f\|_{L_{\mu}^{2}(D)}^{2}+\frac{1}{2}\|\mathscr{P}_{N}f-f\|_{L_{\mu}^{2}(D)}^{2}.\label{eq:bias-2nd}
	\end{align}
	Bounding the first term is more involved.
	\begin{lemma}
		\label{lem:1st-bias}Let $\{x_{1},\dots,x_{N}\}\subset D$ be arbitrary
		and let $V_{N}$ be the corresponding space~\eqref{eq:def-VN}. Then,
		for the $\mathcal{N}_{K}$-orthogonal projection $\mathscr{P}_{N}\colon\mathcal{N}_{K}\to V_{N}$,
		we have 
		\[
		\langle\bbE[f_{M,N;\boldsymbol{Y}}^{\lambda}]-f,\bbE[f_{M,N;\boldsymbol{Y}}^{\lambda}]-\mathscr{P}_{N}f\rangle_{L_{\mu}^{2}(D)}\leq\frac{1}{2}\lambda\|f\|_{K}^{2}.
		\]
	\end{lemma}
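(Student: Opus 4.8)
The plan is to use the variational identity \eqref{eq:eq-cont}, which (recalling that $\bbE[f_{M,N;\boldsymbol{Y}}^{\lambda}]=f_{N}^{\lambda}$) characterises $\bbE[f_{M,N;\boldsymbol{Y}}^{\lambda}]$, together with the observation that $\bbE[f_{M,N;\boldsymbol{Y}}^{\lambda}]$ and $\mathscr{P}_{N}f$ both belong to $V_{N}$, so that $g:=\bbE[f_{M,N;\boldsymbol{Y}}^{\lambda}]-\mathscr{P}_{N}f\in V_{N}$ is an admissible test function. Testing \eqref{eq:eq-cont} with $v=g$ gives
\[
\langle f,g\rangle_{L_{\mu}^{2}(D)}=\langle\bbE[f_{M,N;\boldsymbol{Y}}^{\lambda}],g\rangle_{L_{\mu}^{2}(D)}+\lambda\langle\bbE[f_{M,N;\boldsymbol{Y}}^{\lambda}],g\rangle_{K};
\]
substituting this into $\langle\bbE[f_{M,N;\boldsymbol{Y}}^{\lambda}]-f,g\rangle_{L_{\mu}^{2}(D)}$ cancels the $L_{\mu}^{2}(D)$ contributions and leaves $-\lambda\langle\bbE[f_{M,N;\boldsymbol{Y}}^{\lambda}],g\rangle_{K}$.

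Next I would exploit the defining property of the $\mathcal{N}_{K}$-orthogonal projection: since $\bbE[f_{M,N;\boldsymbol{Y}}^{\lambda}]\in V_{N}$ and $f\in\mathcal{N}_{K}$, one has $\langle\bbE[f_{M,N;\boldsymbol{Y}}^{\lambda}],\mathscr{P}_{N}f\rangle_{K}=\langle\bbE[f_{M,N;\boldsymbol{Y}}^{\lambda}],f\rangle_{K}$, so that $\langle\bbE[f_{M,N;\boldsymbol{Y}}^{\lambda}],g\rangle_{K}=\|\bbE[f_{M,N;\boldsymbol{Y}}^{\lambda}]\|_{K}^{2}-\langle\bbE[f_{M,N;\boldsymbol{Y}}^{\lambda}],f\rangle_{K}$. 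Combining the two steps yields the identity
\[
\langle\bbE[f_{M,N;\boldsymbol{Y}}^{\lambda}]-f,\bbE[f_{M,N;\boldsymbol{Y}}^{\lambda}]-\mathscr{P}_{N}f\rangle_{L_{\mu}^{2}(D)}=\lambda\Bigl(\langle\bbE[f_{M,N;\boldsymbol{Y}}^{\lambda}],f\rangle_{K}-\|\bbE[f_{M,N;\boldsymbol{Y}}^{\lambda}]\|_{K}^{2}\Bigr).
\]
The proof then concludes by Cauchy--Schwarz and Young's inequality in $\langle\cdot,\cdot\rangle_{K}$: from $\langle\bbE[f_{M,N;\boldsymbol{Y}}^{\lambda}],f\rangle_{K}\le\tfrac12\|\bbE[f_{M,N;\boldsymbol{Y}}^{\lambda}]\|_{K}^{2}+\tfrac12\|f\|_{K}^{2}$ the right-hand side is bounded by $\tfrac12\lambda\|f\|_{K}^{2}-\tfrac12\lambda\|\bbE[f_{M,N;\boldsymbol{Y}}^{\lambda}]\|_{K}^{2}\le\tfrac12\lambda\|f\|_{K}^{2}$, which is the claim.

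There is no serious obstacle here: the entire argument is a Galerkin-type manipulation, and the only points requiring care are the bookkeeping of the three inner products $\langle\cdot,\cdot\rangle_{L_{\mu}^{2}(D)}$, $\langle\cdot,\cdot\rangle_{K}$, and $\langle\cdot,\cdot\rangle_{\lambda}$ across the substitution, and noticing that the sign of the $-\lambda\langle\cdot,\cdot\rangle_{K}$ term is precisely what lets the leftover $\tfrac12\lambda\|\bbE[f_{M,N;\boldsymbol{Y}}^{\lambda}]\|_{K}^{2}$ be discarded in the favourable direction. It is also worth using here the standing assumption $f\in\mathcal{N}_{K}$, which guarantees that $\mathscr{P}_{N}f$ and $\langle\bbE[f_{M,N;\boldsymbol{Y}}^{\lambda}],f\rangle_{K}$ are well defined.
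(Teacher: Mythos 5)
Your argument is correct and follows essentially the same route as the paper's: test \eqref{eq:eq-cont} with $g=\bbE[f_{M,N;\boldsymbol{Y}}^{\lambda}]-\mathscr{P}_{N}f\in V_{N}$, invoke the $\mathcal{N}_{K}$-orthogonality of $\mathscr{P}_{N}$, and finish with Young's inequality in $\langle\cdot,\cdot\rangle_{K}$. The only (immaterial) difference is bookkeeping: the paper rewrites $\lambda\langle\bbE[f_{M,N;\boldsymbol{Y}}^{\lambda}]-f,g\rangle_{K}=\lambda\|g\|_{K}^{2}$ and discards that term, whereas you discard $\tfrac{\lambda}{2}\|\bbE[f_{M,N;\boldsymbol{Y}}^{\lambda}]\|_{K}^{2}$; both yield the same bound $\tfrac12\lambda\|f\|_{K}^{2}$.
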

	
	\begin{proof}
		From $\bbE[f_{M,N;\boldsymbol{Y}}^{\lambda}]-\mathscr{P}_{N}f\in V_{N}$, 
		the equation \eqref{eq:eq-cont} implies
		\begin{align*}
			\langle\bbE[&f_{M,N;\boldsymbol{Y}}^{\lambda}],\bbE[f_{M,N;\boldsymbol{Y}}^{\lambda}]-\mathscr{P}_{N}f\rangle_{L_{\mu}^{2}(D)} 
			\\
			+&\lambda\langle\bbE[f_{M,N;\boldsymbol{Y}}^{\lambda}]-f+f,\bbE[f_{M,N;\boldsymbol{Y}}^{\lambda}]-\mathscr{P}_{N}f\rangle_{K} =\langle f,\bbE[f_{M,N;\boldsymbol{Y}}^{\lambda}]-\mathscr{P}_{N}f\rangle_{L_{\mu}^{2}(D)},
		\end{align*}
		and thus
		\begin{align}
			\langle\bbE[&f_{M,N;\boldsymbol{Y}}^{\lambda}]-f,\bbE[f_{M,N;\boldsymbol{Y}}^{\lambda}]-\mathscr{P}_{N}f\rangle_{L_{\mu}^{2}(D)}\notag\\
			&+  \lambda\langle\bbE[f_{M,N;\boldsymbol{Y}}^{\lambda}]-f,\bbE[f_{M,N;\boldsymbol{Y}}^{\lambda}]-\mathscr{P}_{N}f\rangle_{K}  =-\lambda\langle f,\bbE[f_{M,N;\boldsymbol{Y}}^{\lambda}]-\mathscr{P}_{N}f\rangle_{K}.\label{eq:before-Young}
		\end{align}
		Since $\mathscr{P}_{N}$ is the $\mathcal{N}_{K}$-orthogonal projection,
		we have $\langle\bbE[f_{M,N;\boldsymbol{Y}}^{\lambda}]-f,\bbE[f_{M,N;\boldsymbol{Y}}^{\lambda}]-\mathscr{P}_{N}f\rangle_{K}=\|\bbE[f_{M,N;\boldsymbol{Y}}^{\lambda}]-\mathscr{P}_{N}f\|_{K}^{2}$,
		and thus the Young's inequality implies
		\[
		\langle\bbE[f_{M,N;\boldsymbol{Y}}^{\lambda}]-f,\bbE[f_{M,N;\boldsymbol{Y}}^{\lambda}]-\mathscr{P}_{N}f\rangle_{L_{\mu}^{2}(D)}+\frac{1}{2}\lambda\|\bbE[f_{M,N;\boldsymbol{Y}}^{\lambda}]-\mathscr{P}_{N}f\|_{K}^{2}\leq\frac{1}{2}\lambda\|f\|_{K}^{2}.
		\]
		This completes the proof.
	\end{proof}
	Hence, we obtain an estimate of the squared bias.
	\begin{proposition}
		\label{prop:sq-bias}Let the assumptions of Lemma~\ref{lem:1st-bias}
		hold. Then, the solution $f_{M,N;\boldsymbol{Y}}^{\lambda}$ to \eqref{eq:eq-disc}
		satisfies
		\[
		\|\bbE[f_{M,N;\boldsymbol{Y}}^{\lambda}]-f\|_{L_{\mu}^{2}(D)}^{2}\leq\|\mathscr{P}_{N}f-f\|_{L_{\mu}^{2}(D)}^{2}+\lambda\|f\|_{K}^{2}.
		\]
	\end{proposition}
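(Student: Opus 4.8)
The plan is to simply assemble the three ingredients already in place: the orthogonal decomposition \eqref{eq:bias-decomp} of the squared bias, the Cauchy--Schwarz/Young bound \eqref{eq:bias-2nd} on its second term, and Lemma~\ref{lem:1st-bias} on its first term. No new structural idea is needed; the only ``trick'' is the standard absorption argument at the end.

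Concretely, first I would start from \eqref{eq:bias-decomp}, which writes
\[
\|\mathbb{E}[f_{M,N;\boldsymbol{Y}}^{\lambda}]-f\|_{L_{\mu}^{2}(D)}^{2}
=\langle\mathbb{E}[f_{M,N;\boldsymbol{Y}}^{\lambda}]-f,\mathbb{E}[f_{M,N;\boldsymbol{Y}}^{\lambda}]-\mathscr{P}_{N}f\rangle_{L_{\mu}^{2}(D)}
+\langle\mathbb{E}[f_{M,N;\boldsymbol{Y}}^{\lambda}]-f,\mathscr{P}_{N}f-f\rangle_{L_{\mu}^{2}(D)}.
\]
To the first inner product I apply Lemma~\ref{lem:1st-bias}, bounding it by $\tfrac12\lambda\|f\|_{K}^{2}$; to the second I apply \eqref{eq:bias-2nd}, bounding it by $\tfrac12\|\mathbb{E}[f_{M,N;\boldsymbol{Y}}^{\lambda}]-f\|_{L_{\mu}^{2}(D)}^{2}+\tfrac12\|\mathscr{P}_{N}f-f\|_{L_{\mu}^{2}(D)}^{2}$. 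This yields
\[
\|\mathbb{E}[f_{M,N;\boldsymbol{Y}}^{\lambda}]-f\|_{L_{\mu}^{2}(D)}^{2}
\leq \tfrac12\lambda\|f\|_{K}^{2}+\tfrac12\|\mathbb{E}[f_{M,N;\boldsymbol{Y}}^{\lambda}]-f\|_{L_{\mu}^{2}(D)}^{2}+\tfrac12\|\mathscr{P}_{N}f-f\|_{L_{\mu}^{2}(D)}^{2}.
\]

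Finally I would absorb the term $\tfrac12\|\mathbb{E}[f_{M,N;\boldsymbol{Y}}^{\lambda}]-f\|_{L_{\mu}^{2}(D)}^{2}$ into the left-hand side (legitimate since it is finite: $f\in\mathcal{N}_{K}\subset L_{\mu}^{2}(D)$ and $\mathbb{E}[f_{M,N;\boldsymbol{Y}}^{\lambda}]=f_{N}^{\lambda}\in V_{N}\subset L_{\mu}^{2}(D)$), leaving $\tfrac12\|\mathbb{E}[f_{M,N;\boldsymbol{Y}}^{\lambda}]-f\|_{L_{\mu}^{2}(D)}^{2}\leq\tfrac12\lambda\|f\|_{K}^{2}+\tfrac12\|\mathscr{P}_{N}f-f\|_{L_{\mu}^{2}(D)}^{2}$, and multiplying through by $2$ gives the claim. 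The only point that requires any care is ensuring the absorbed quantity is genuinely finite so the subtraction is valid; given the standing integrability assumptions on $K$ this is immediate, so there is no real obstacle here — the proposition is a short corollary of Lemma~\ref{lem:1st-bias}.
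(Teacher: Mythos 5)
Your proof is correct and is exactly the argument the paper intends (the paper simply writes ``Hence, we obtain\dots'' after Lemma~\ref{lem:1st-bias}, leaving the assembly of \eqref{eq:bias-decomp}, \eqref{eq:bias-2nd}, and the lemma, followed by the absorption step, implicit). Your added remark on the finiteness of the absorbed term is a reasonable bit of care but raises no issue beyond what the standing assumptions already guarantee.
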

	
	By exploiting a stronger smoothness of $f$, we can establish a bound
	that is of second order in $\lambda$.
	\begin{proposition}
		Let $\{x_{1},\dots,x_{N}\}\subset D$ be arbitrary and let $V_{N}$
		be the corresponding space~\eqref{eq:def-VN}. Suppose $f\in\mathcal{N}_{K}^{2}$.
		Then, the solution $f_{M,N;\boldsymbol{Y}}^{\lambda}$ to \eqref{eq:eq-disc}
		satisfies
		\[
		\|\bbE[f_{M,N;\boldsymbol{Y}}^{\lambda}]-f\|_{L_{\mu}^{2}(D)}^{2}\leq3\|\mathscr{P}_{N}f-f\|_{L_{\mu}^{2}(D)}^{2}+8\lambda^{2}\|f\|_{\mathcal{N}_{K}^{2}}^{2}.
		\]
	\end{proposition}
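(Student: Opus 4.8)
The plan is to keep the bias decomposition \eqref{eq:bias-decomp} and the bound \eqref{eq:bias-2nd} untouched, and to re-estimate only the cross term $\langle\bbE[f_{M,N;\boldsymbol{Y}}^{\lambda}]-f,\bbE[f_{M,N;\boldsymbol{Y}}^{\lambda}]-\mathscr{P}_{N}f\rangle_{L_{\mu}^{2}(D)}$. In Lemma~\ref{lem:1st-bias} this term was controlled by $\tfrac12\lambda\|f\|_{K}^{2}$; exploiting the stronger hypothesis $f\in\mathcal{N}_{K}^{2}$, we will instead bound it by a quantity quadratic in $\lambda$ plus terms that can be absorbed into $\|\bbE[f_{M,N;\boldsymbol{Y}}^{\lambda}]-f\|_{L_{\mu}^{2}(D)}^{2}$ and $\|\mathscr{P}_{N}f-f\|_{L_{\mu}^{2}(D)}^{2}$ by Young's inequality.

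The single new ingredient is the interpolation-type inequality
\[
|\langle f,w\rangle_{K}|\le\|f\|_{\mathcal{N}_{K}^{2}}\,\|w\|_{L_{\mu}^{2}(D)}\qquad\text{for }f\in\mathcal{N}_{K}^{2},\ w\in\mathcal{N}_{K}.
\]
To prove it, note first that $\mathcal{N}_{K}^{2}\subset\mathcal{N}_{K}$ since $(\beta_{\ell})$ is bounded, so the series representation of $\langle\cdot,\cdot\rangle_{K}$ applies; then pair $(\beta_{\ell}^{-1}\langle f,\varphi_{\ell}\rangle_{L_{\mu}^{2}(D)})_{\ell}$ with $(\langle w,\varphi_{\ell}\rangle_{L_{\mu}^{2}(D)})_{\ell}$ via Cauchy--Schwarz and use $\sum_{\ell}\beta_{\ell}^{-2}\langle f,\varphi_{\ell}\rangle_{L_{\mu}^{2}(D)}^{2}=\|f\|_{\mathcal{N}_{K}^{2}}^{2}$ together with Bessel's inequality $\sum_{\ell}\langle w,\varphi_{\ell}\rangle_{L_{\mu}^{2}(D)}^{2}\le\|w\|_{L_{\mu}^{2}(D)}^{2}$. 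Next, arguing exactly as in the derivation of \eqref{eq:before-Young} (using \eqref{eq:eq-cont} and the identity $\langle\bbE[f_{M,N;\boldsymbol{Y}}^{\lambda}]-f,\bbE[f_{M,N;\boldsymbol{Y}}^{\lambda}]-\mathscr{P}_{N}f\rangle_{K}=\|\bbE[f_{M,N;\boldsymbol{Y}}^{\lambda}]-\mathscr{P}_{N}f\|_{K}^{2}$, both already in the proof of Lemma~\ref{lem:1st-bias}) one obtains
\[
\langle\bbE[f_{M,N;\boldsymbol{Y}}^{\lambda}]-f,\bbE[f_{M,N;\boldsymbol{Y}}^{\lambda}]-\mathscr{P}_{N}f\rangle_{L_{\mu}^{2}(D)}+\lambda\|\bbE[f_{M,N;\boldsymbol{Y}}^{\lambda}]-\mathscr{P}_{N}f\|_{K}^{2}=-\lambda\langle f,\bbE[f_{M,N;\boldsymbol{Y}}^{\lambda}]-\mathscr{P}_{N}f\rangle_{K}.
\]
Discarding the nonnegative term on the left, applying the displayed inequality with $w=\bbE[f_{M,N;\boldsymbol{Y}}^{\lambda}]-\mathscr{P}_{N}f\in V_{N}$, and then the triangle inequality $\|w\|_{L_{\mu}^{2}(D)}\le\|\bbE[f_{M,N;\boldsymbol{Y}}^{\lambda}]-f\|_{L_{\mu}^{2}(D)}+\|f-\mathscr{P}_{N}f\|_{L_{\mu}^{2}(D)}$ yields
\[
\langle\bbE[f_{M,N;\boldsymbol{Y}}^{\lambda}]-f,\bbE[f_{M,N;\boldsymbol{Y}}^{\lambda}]-\mathscr{P}_{N}f\rangle_{L_{\mu}^{2}(D)}\le\lambda\|f\|_{\mathcal{N}_{K}^{2}}\bigl(\|\bbE[f_{M,N;\boldsymbol{Y}}^{\lambda}]-f\|_{L_{\mu}^{2}(D)}+\|f-\mathscr{P}_{N}f\|_{L_{\mu}^{2}(D)}\bigr).
\]

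It then remains to substitute this together with \eqref{eq:bias-2nd} into \eqref{eq:bias-decomp}: the right-hand side becomes $\tfrac12\|\bbE[f_{M,N;\boldsymbol{Y}}^{\lambda}]-f\|_{L_{\mu}^{2}(D)}^{2}+\tfrac12\|\mathscr{P}_{N}f-f\|_{L_{\mu}^{2}(D)}^{2}$ plus the two mixed products $\lambda\|f\|_{\mathcal{N}_{K}^{2}}\|\bbE[f_{M,N;\boldsymbol{Y}}^{\lambda}]-f\|_{L_{\mu}^{2}(D)}$ and $\lambda\|f\|_{\mathcal{N}_{K}^{2}}\|f-\mathscr{P}_{N}f\|_{L_{\mu}^{2}(D)}$. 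Bounding the first product by $\tfrac14\|\bbE[f_{M,N;\boldsymbol{Y}}^{\lambda}]-f\|_{L_{\mu}^{2}(D)}^{2}+\lambda^{2}\|f\|_{\mathcal{N}_{K}^{2}}^{2}$ and the second by $\lambda^{2}\|f\|_{\mathcal{N}_{K}^{2}}^{2}+\tfrac14\|f-\mathscr{P}_{N}f\|_{L_{\mu}^{2}(D)}^{2}$ (Young), the coefficient of $\|\bbE[f_{M,N;\boldsymbol{Y}}^{\lambda}]-f\|_{L_{\mu}^{2}(D)}^{2}$ on the right becomes $\tfrac34$; subtracting that term from both sides and multiplying by $4$ gives exactly $\|\bbE[f_{M,N;\boldsymbol{Y}}^{\lambda}]-f\|_{L_{\mu}^{2}(D)}^{2}\le3\|\mathscr{P}_{N}f-f\|_{L_{\mu}^{2}(D)}^{2}+8\lambda^{2}\|f\|_{\mathcal{N}_{K}^{2}}^{2}$. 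The crux — and the only place where the hypothesis $f\in\mathcal{N}_{K}^{2}$ enters — is the interpolation inequality of the second paragraph; everything else simply reorganises the algebra already carried out for the first-order estimate, and the constants $3$ and $8$ are not optimal.
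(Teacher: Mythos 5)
Your proof is correct and follows essentially the same route as the paper's: starting from \eqref{eq:before-Young}, bounding $-\lambda\langle f,\bbE[f_{M,N;\boldsymbol{Y}}^{\lambda}]-\mathscr{P}_{N}f\rangle_{K}$ by $\lambda\|f\|_{\mathcal{N}_{K}^{2}}\|\bbE[f_{M,N;\boldsymbol{Y}}^{\lambda}]-\mathscr{P}_{N}f\|_{L_{\mu}^{2}(D)}$, and then absorbing via Young's inequality into \eqref{eq:bias-decomp} and \eqref{eq:bias-2nd} to land on the same constants $3$ and $8$. The only differences are cosmetic (you apply the triangle inequality before Young's rather than after, and you spell out the Cauchy--Schwarz proof of the duality bound $|\langle f,w\rangle_{K}|\le\|f\|_{\mathcal{N}_{K}^{2}}\|w\|_{L_{\mu}^{2}(D)}$, which the paper uses without comment).
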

	
	\begin{proof}
		From \eqref{eq:before-Young} we have
		\begin{align*}
			\langle\bbE[&f_{M,N;\boldsymbol{Y}}^{\lambda}]-f,\bbE[f_{M,N;\boldsymbol{Y}}^{\lambda}]  -\mathscr{P}_{N}f\rangle_{L_{\mu}^{2}(D)}+\lambda\|\bbE[f_{M,N;\boldsymbol{Y}}^{\lambda}]-\mathscr{P}_{N}f\|_{K}^{2}\\
			& =-\lambda\langle f,\bbE[f_{M,N;\boldsymbol{Y}}^{\lambda}]-\mathscr{P}_{N}f\rangle_{K}\\
			& \leq\lambda\|f\|_{\mathcal{N}_{K}^{2}}\|\bbE[f_{M,N;\boldsymbol{Y}}^{\lambda}]-\mathscr{P}_{N}f\|_{L_{\mu}^{2}(D)}\\
			& \leq2\lambda^{2}\|f\|_{\mathcal{N}_{K}^{2}}^{2}+\frac{1}{8}\|\bbE[f_{M,N;\boldsymbol{Y}}^{\lambda}]-\mathscr{P}_{N}f\|_{L_{\mu}^{2}(D)}^{2}\\
			& \leq2\lambda^{2}\|f\|_{\mathcal{N}_{K}^{2}}^{2}+\frac{1}{4}\|\bbE[f_{M,N;\boldsymbol{Y}}^{\lambda}]-f\|_{L_{\mu}^{2}(D)}^{2}+\frac{1}{4}\|f-\mathscr{P}_{N}f\|_{L_{\mu}^{2}(D)}^{2}.
		\end{align*}
		Thus, \eqref{eq:bias-decomp} and \eqref{eq:bias-2nd} imply
		\[
		\Bigl(\frac{1}{2}-\frac{1}{4}\Bigr)\|\bbE[f_{M,N;\boldsymbol{Y}}^{\lambda}]-f\|_{L_{\mu}^{2}(D)}^{2}\leq\Bigl(\frac{1}{2}+\frac{1}{4}\Bigr)\|\mathscr{P}_{N}f-f\|_{L_{\mu}^{2}(D)}^{2}+2\lambda^{2}\|f\|_{\mathcal{N}_{K}^{2}}^{2}.
		\]
		Now the proof is complete.
	\end{proof}
	
	\subsection{Variance estimate}
	
	Now we bound the variance $\bbE[\|f_{M,N;\boldsymbol{Y}}^{\lambda}-\bbE[f_{M,N;\boldsymbol{Y}}^{\lambda}]\|_{L_{\mu}^{2}(D)}^{2}]$
	in \eqref{eq:MISE-decomp}. Taking the difference of~\eqref{eq:eq-disc}
	and \eqref{eq:eq-cont} yields\begin{equation}
		\langle f_{M,N;\boldsymbol{Y}}^{\lambda}-\bbE[f_{M,N;\boldsymbol{Y}}^{\lambda}],v\rangle_{\lambda}=\Delta_{\boldsymbol{Y}}(v)-F(v)\quad\text{for all }v\in V_{N},\label{eq:eq-for-var}
	\end{equation}
	where $\Delta_{\boldsymbol{Y}}$ is defined in \eqref{eq:def-Delta},
	and we used the notation
	\begin{equation}
		F(v):=\int_{D}v(y)f(y)\mathrm{d}\mu(y).\label{eq:def-F}
	\end{equation}
	Then, we have $F\in(\mathcal{N}_{K})'$, and as point-evaluation functionals
	are continuous on $\mathcal{N}_{K}$, we also have $\Delta_{\boldsymbol{Y}}\in(\mathcal{N}_{K})'$.
	
	If $f$ is smooth and thus accordingly $K$ is taken to be smooth,
	the corresponding space $\mathcal{N}_{K}$ may be smaller than necessary
	for $\Delta_{\boldsymbol{Y}}$ to be continuous. Namely, in such cases
	$\Delta_{\boldsymbol{Y}}$ is continuous on larger spaces $\mathcal{N}_{K}^{\tau}$,
	$\tau\in(\tau_{0},1)$ for some $\tau_{0}\geq0$. We will exploit
	this observation in the variance estimate and assess the variance
	of $\Delta_{\boldsymbol{Y}}-F$ in $\cN_{K}^{-\tau}$.
	\begin{lemma}
		Suppose that for some $\tau\in(0,1]$ we have $\Delta_{\boldsymbol{Y}}\in\cN_{K}^{-\tau}$,
		and that $f$ satisfies $\langle K_{\tau}(\cdot,\cdot),f\rangle_{L_{\mu}^{2}(D)}:=\int_{D}K_{\tau}(x,x)f(x)\,\mathrm{d}\mu(x)<\infty$
		with $K_{\tau}(x_{1},x_{2}):=\sum_{\ell=0}^{\infty}\beta_{\ell}^{\tau}\varphi_{\ell}(x_{1})\varphi_{\ell}(x_{2})$.
		Then, the equality
		\[
		\mathbb{E}[\|\Delta_{\boldsymbol{Y}}-F\|_{\cN_{K}^{-\tau}}^{2}]=\frac{\langle K_{\tau}(\cdot,\cdot),f\rangle_{L_{\mu}^{2}(D)}-\|F\|_{\cN_{K}^{-\tau}}^{2}}{M}
		\]
		holds, where $F$ is defined in~\eqref{eq:def-F}.
	\end{lemma}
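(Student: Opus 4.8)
The plan is to expand $\Delta_{\boldsymbol{Y}}$ and $F$ in the coefficient system underlying $\cN_K^{-\tau}$, turn the $\cN_K^{-\tau}$-norm into a weighted $\ell^2$ sum of coefficient differences, and then recognise each such difference as a centred empirical mean of i.i.d.\ random variables whose variance is elementary to compute.

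First I would identify the coefficients. Since every $\Psi\in\cN_K^{-\tau}$ satisfies $\Psi(\varphi_j)=\Psi_j$ (test the defining series at $v=\varphi_j$, which lies in $\cN_K^{\tau}$ because $\|\varphi_j\|_{\cN_K^{\tau}}^2=\beta_j^{-\tau}<\infty$), the hypothesis $\Delta_{\boldsymbol{Y}}\in\cN_K^{-\tau}$ gives $(\Delta_{\boldsymbol{Y}})_\ell=\Delta_{\boldsymbol{Y}}(\varphi_\ell)=\tfrac1M\sum_{m=1}^M\varphi_\ell(Y_m)$, and likewise $F_\ell=F(\varphi_\ell)=\langle\varphi_\ell,f\rangle_{L_\mu^2(D)}=\bbE[\varphi_\ell(Y_1)]$. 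Before proceeding I would record two consequences of the assumption $\langle K_\tau(\cdot,\cdot),f\rangle_{L_\mu^2(D)}<\infty$: from $\beta_\ell^{\tau}\varphi_\ell(x)^2\le K_\tau(x,x)$ we get $\bbE[\varphi_\ell(Y_1)^2]=\int_D\varphi_\ell(x)^2f(x)\,\mathrm d\mu(x)\le\beta_\ell^{-\tau}\langle K_\tau(\cdot,\cdot),f\rangle_{L_\mu^2(D)}<\infty$, so $\varphi_\ell(Y_1)\in L^2(\Omega,\bbP)$; and, by Cauchy--Schwarz together with $\int_Df\,\mathrm d\mu=1$, $F_\ell^2\le\bbE[\varphi_\ell(Y_1)^2]$, whence $\|F\|_{\cN_K^{-\tau}}^2=\sum_\ell\beta_\ell^{\tau}F_\ell^2\le\langle K_\tau(\cdot,\cdot),f\rangle_{L_\mu^2(D)}<\infty$, i.e.\ $F\in\cN_K^{-\tau}$ and the right-hand side of the asserted identity is well defined and nonnegative.

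Then I would compute directly. By definition $\|\Delta_{\boldsymbol{Y}}-F\|_{\cN_K^{-\tau}}^2=\sum_{\ell=0}^{\infty}\beta_\ell^{\tau}\big((\Delta_{\boldsymbol{Y}})_\ell-F_\ell\big)^2$, a series of nonnegative terms, so Tonelli's theorem permits taking the expectation term by term. For fixed $\ell$, $(\Delta_{\boldsymbol{Y}})_\ell-F_\ell=\tfrac1M\sum_{m=1}^M\big(\varphi_\ell(Y_m)-\bbE[\varphi_\ell(Y_1)]\big)$ is the centred sample mean of i.i.d.\ square-integrable random variables, so $\bbE\big[((\Delta_{\boldsymbol{Y}})_\ell-F_\ell)^2\big]=\tfrac1M\operatorname{Var}(\varphi_\ell(Y_1))=\tfrac1M\big(\int_D\varphi_\ell(x)^2f(x)\,\mathrm d\mu(x)-F_\ell^2\big)$. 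Multiplying by $\beta_\ell^{\tau}$, summing over $\ell$, and applying Tonelli once more to interchange $\sum_\ell$ with $\int_D$ — legitimate because the integrand $\big(\sum_\ell\beta_\ell^{\tau}\varphi_\ell(x)^2\big)f(x)=K_\tau(x,x)f(x)$ is nonnegative and $\mu$-integrable by hypothesis — yields $\sum_\ell\beta_\ell^{\tau}\int_D\varphi_\ell(x)^2f(x)\,\mathrm d\mu(x)=\langle K_\tau(\cdot,\cdot),f\rangle_{L_\mu^2(D)}$ and $\sum_\ell\beta_\ell^{\tau}F_\ell^2=\|F\|_{\cN_K^{-\tau}}^2$, which is exactly the claimed formula.

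The step I expect to require the most care is the double appeal to Tonelli — first interchanging $\bbE$ with the infinite $\ell$-sum, then interchanging the $\ell$-sum with the $\mu$-integral. Both exchanges are justified purely by nonnegativity, but the finiteness hypothesis $\langle K_\tau(\cdot,\cdot),f\rangle_{L_\mu^2(D)}<\infty$ is precisely what guarantees the resulting series converge and, in particular, that each $\varphi_\ell(Y_1)$ genuinely lies in $L^2(\Omega,\bbP)$ so that the elementary variance identity for the sample mean is valid; the rest of the argument is routine.
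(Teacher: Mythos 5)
Your proof is correct, and it takes a somewhat different route from the paper's. The paper first expands the squared norm globally as $\mathbb{E}[\|\Delta_{\boldsymbol{Y}}\|_{\cN_{K}^{-\tau}}^{2}]-2\,\mathbb{E}[\langle\Delta_{\boldsymbol{Y}},F\rangle_{\cN_{K}^{-\tau}}]+\|F\|_{\cN_{K}^{-\tau}}^{2}$ and then treats each piece: the cross term requires an explicit integrable majorant and the dominated convergence theorem, and the first term is computed by splitting $\frac{1}{M^{2}}\sum_{m,k}\mathbb{E}[\langle\delta_{Y_{m}},\delta_{Y_{k}}\rangle_{\cN_{K}^{-\tau}}]$ into diagonal and off-diagonal parts. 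You instead decompose by the coefficient index $\ell$ first, writing $\|\Delta_{\boldsymbol{Y}}-F\|_{\cN_{K}^{-\tau}}^{2}=\sum_{\ell}\beta_{\ell}^{\tau}\bigl((\Delta_{\boldsymbol{Y}})_{\ell}-F_{\ell}\bigr)^{2}$ as a series of nonnegative terms, which lets you (i) interchange $\mathbb{E}$ and $\sum_{\ell}$ by Tonelli alone, with no dominated convergence argument, and (ii) reduce each term to the elementary identity for the variance of a centred sample mean of i.i.d.\ square-integrable variables. The two computations are of course the same underlying algebra (your per-$\ell$ variance identity packages the paper's diagonal/off-diagonal split together with its cross term), but your organisation is cleaner on the measure-theoretic side: every interchange is justified by nonnegativity, and the hypothesis $\langle K_{\tau}(\cdot,\cdot),f\rangle_{L_{\mu}^{2}(D)}<\infty$ enters exactly where it should, namely to guarantee $\varphi_{\ell}(Y_{1})\in L^{2}(\Omega,\mathbb{P})$ and the finiteness of the two series you separate at the end. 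Your preliminary observations — that $F\in\cN_{K}^{-\tau}$ with $\|F\|_{\cN_{K}^{-\tau}}^{2}\le\langle K_{\tau}(\cdot,\cdot),f\rangle_{L_{\mu}^{2}(D)}$, and that the splitting $\sum_{\ell}\beta_{\ell}^{\tau}(a_{\ell}-b_{\ell})=\sum_{\ell}\beta_{\ell}^{\tau}a_{\ell}-\sum_{\ell}\beta_{\ell}^{\tau}b_{\ell}$ is legitimate because both series converge — are exactly the points that need checking, and you check them.
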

	
	\begin{proof}
		We have
		\begin{equation}
			\mathbb{E}[\|\Delta_{\boldsymbol{Y}}-F\|_{\cN_{K}^{-\tau}}^{2}]=\mathbb{E}[\|\Delta_{\boldsymbol{Y}}\|_{\cN_{K}^{-\tau}}^{2}]-2\mathbb{E}[\langle\Delta_{\boldsymbol{Y}},F\rangle_{\cN_{K}^{-\tau}}]+\|F\|_{\cN_{K}^{-\tau}}^{2}.\label{eq:expand-var}
		\end{equation}
		For the second term, first notice that for any $L\in\mathbb{N}$ we
		have
		\begin{align*}
			\mathbb{E}\biggl[\biggl|\sum_{\ell=0}^{L}\beta_{\ell}^{\tau}\Delta_{\boldsymbol{Y}}(\varphi_{\ell})F(\varphi_{\ell})\biggr|\biggr] & \leq\mathbb{E}\biggl[\frac{1}{M}\sum_{m=1}^{M}\sum_{\ell=0}^{L}\beta_{\ell}^{\tau}|\varphi_{\ell}(Y_{m})|\,|\langle f,\varphi_{\ell}\rangle_{L_{\mu}^{2}(D)}|\biggr]\\
			& =\frac{1}{M}\sum_{m=1}^{M}\sum_{\ell=0}^{L}\beta_{\ell}^{\tau}\mathbb{E}[|\varphi_{\ell}(Y_{m})|]\,|\langle f,\varphi_{\ell}\rangle_{L_{\mu}^{2}(D)}|\\
			& \leq\frac{1}{M}\sum_{m=1}^{M}\sum_{\ell=0}^{L}\beta_{\ell}^{\tau}\sqrt{\mathbb{E}[|\varphi_{\ell}(Y_{m})|^{2}]}\,|\langle f,\varphi_{\ell}\rangle_{L_{\mu}^{2}(D)}|\\
			& =\sum_{\ell=0}^{L}\beta_{\ell}^{\tau}\sqrt{\langle\varphi_{\ell}^{2},f\rangle_{L_{\mu}^{2}(D)}}|\langle f,\varphi_{\ell}\rangle_{L_{\mu}^{2}(D)}|\\
			& \leq\sqrt{\sum_{\ell=0}^{L}\langle\beta_{\ell}^{\tau}\varphi_{\ell}^{2},f\rangle_{L_{\mu}^{2}(D)}}\sqrt{\sum_{\ell=0}^{L}\beta_{\ell}^{\tau}|\langle f,\varphi_{\ell}\rangle_{L_{\mu}^{2}(D)}|^{2}}\\
			& \leq\sqrt{\langle K_{\tau}(\cdot,\cdot),f\rangle_{L_{\mu}^{2}(D)}}\|F\|_{\cN_{K}^{-\tau}}<\infty,
		\end{align*}
		where in the last equality we used the non-negativity of $\beta_{\ell}^{\tau}\varphi_{\ell}^{2}f$,
		and in view of Proposition~\ref{prop:dual-isom} the slight abuse
		of notation $\|F\|_{\cN_{K}^{-\tau}}$ should be unambiguous. Hence,
		we can use the dominated convergence theorem to conclude
		\[
		-2\mathbb{E}[\langle\Delta_{\boldsymbol{Y}},F\rangle_{\cN_{K}^{-\tau}}]=-2\sum_{\ell=0}^{\infty}\beta_{\ell}\mathbb{E}[\Delta_{\boldsymbol{Y}}(\varphi_{\ell})]F(\varphi_{\ell})=-2\|F\|_{\cN_{K}^{-\tau}}.
		\]
		The first term in the right-hand side of~\eqref{eq:expand-var} can
		be rewritten as
		\begin{align*}
			\mathbb{E}[\|\Delta_{\boldsymbol{Y}}\|_{\cN_{K}^{-\tau}}^{2}] & =\frac{1}{M^{2}}\sum_{m=1}^{M}\mathbb{E}[\|\delta_{Y_{m}}\|_{\cN_{K}^{-\tau}}^{2}]+\frac{1}{M^{2}}\sum_{m=1}^{M}\sum_{\substack{k=1\\
					k\neq m
				}
			}^{M}\mathbb{E}[\langle\delta_{Y_{m}},\delta_{Y_{k}}\rangle_{\cN_{K}^{-\tau}}]\\
			& =\frac{1}{M}\sum_{\ell=0}^{\infty}\beta_{\ell}^{\tau}\mathbb{E}[\varphi_{\ell}(Y_{1})^{2}]+\frac{1}{M^{2}}\sum_{m=1}^{M}\sum_{\substack{k=1\\
					k\neq m
				}
			}^{M}\sum_{\ell=0}^{\infty}\beta_{\ell}^{\tau}\mathbb{E}[\varphi_{\ell}(Y_{m})]\mathbb{E}[\varphi_{\ell}(Y_{k})]\\
			& =\frac{\langle K_{\tau}(\cdot,\cdot),f\rangle_{L_{\mu}^{2}(D)}}{M}+\frac{M-1}{M}\|F\|_{\cN_{K}^{-\tau}}^{2},
		\end{align*}
		where we used the notation $\delta_{Y_{m}}(v):=v(Y_{m})$ for $m=1,\dots,M$.
		Hence, we conclude
		\[
		\mathbb{E}[\|\Delta_{\boldsymbol{Y}}-F\|_{\cN_{K}^{-\tau}}^{2}]=\frac{\langle K_{\tau}(\cdot,\cdot),f\rangle_{L_{\mu}^{2}(D)}-\|F\|_{\cN_{K}^{-\tau}}^{2}}{M}.
		\]
	\end{proof}
	We arrive at the variance estimate of our density approximation. The
	proof is inspired by \cite[Theorem 2]{Arnone.E_Kneip_Nobile_Sangalli_2022_regression}.
	\begin{proposition}
		\label{prop:var-estim}Suppose that for some $\tau\in(0,1]$ we have
		$\Delta_{\boldsymbol{Y}}\in\cN_{K}^{-\tau}$, and that $f$ satisfies
		$\langle K_{\tau}(\cdot,\cdot),f\rangle_{L_{\mu}^{2}(D)}<\infty$
		with $K_{\tau}(x_{1},x_{2})=\sum_{\ell=0}^{\infty}\beta_{\ell}^{\tau}\varphi_{\ell}(x_{1})\varphi_{\ell}(x_{2})$.
		Then, for any $\lambda\in(0,1]$ we have
		\[
		\mathbb{E}[\|f_{M,N;\boldsymbol{Y}}^{\lambda}-\bbE[f_{M,N;\boldsymbol{Y}}^{\lambda}]\|_{\lambda}^{2}]\leq\frac{\langle K_{\tau}(\cdot,\cdot),f\rangle_{L_{\mu}^{2}(D)}}{M\lambda^{\tau}}.
		\]
	\end{proposition}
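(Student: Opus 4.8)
The plan is to test the error identity \eqref{eq:eq-for-var} against the error itself and then trade the resulting $\cN_K^\tau$-norm for the $\|\cdot\|_\lambda$-norm, at the price of a factor $\lambda^{-\tau/2}$, before finally applying the preceding lemma.

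Write $e_{\boldsymbol{Y}}:=f_{M,N;\boldsymbol{Y}}^{\lambda}-\bbE[f_{M,N;\boldsymbol{Y}}^{\lambda}]$. Since $f_{M,N;\boldsymbol{Y}}^{\lambda}(\omega)\in V_N$ for every $\omega$ and $\bbE[f_{M,N;\boldsymbol{Y}}^{\lambda}]=f_N^\lambda\in V_N$, we have $e_{\boldsymbol{Y}}\in V_N\subset\cN_K$, and because $\tau\le1$ and $(\beta_\ell)$ is bounded one has $\cN_K=\cN_K^1\subset\cN_K^\tau$, so $e_{\boldsymbol{Y}}\in\cN_K^\tau$. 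Taking $v=e_{\boldsymbol{Y}}$ in \eqref{eq:eq-for-var} gives $\|e_{\boldsymbol{Y}}\|_\lambda^2=(\Delta_{\boldsymbol{Y}}-F)(e_{\boldsymbol{Y}})$. By hypothesis $\Delta_{\boldsymbol{Y}}\in\cN_K^{-\tau}$, and $F\in\cN_K^{-\tau}$ as well, since $(\beta_\ell)$ is bounded and, by Bessel's inequality, $\|F\|_{\cN_K^{-\tau}}^2=\sum_{\ell\ge0}\beta_\ell^{\tau}\langle f,\varphi_\ell\rangle_{L_\mu^2(D)}^2\le(\sup_{\ell\ge0}\beta_\ell+1)^{\tau}\sum_{\ell\ge0}\langle f,\varphi_\ell\rangle_{L_\mu^2(D)}^2\le(\sup_{\ell\ge0}\beta_\ell+1)^{\tau}\|f\|_{L_\mu^2(D)}^2<\infty$; hence $\Delta_{\boldsymbol{Y}}-F\in\cN_K^{-\tau}$. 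Using Proposition~\ref{prop:dual-isom} to identify $\cN_K^{-\tau}$ isometrically with $(\cN_K^\tau)'$,
\[
\|e_{\boldsymbol{Y}}\|_\lambda^2=(\Delta_{\boldsymbol{Y}}-F)(e_{\boldsymbol{Y}})\le\|\Delta_{\boldsymbol{Y}}-F\|_{\cN_K^{-\tau}}\,\|e_{\boldsymbol{Y}}\|_{\cN_K^\tau}.
\]

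The next step is the norm comparison on $\cN_K$. Expanding in $\{\varphi_\ell\}$ one has, for $w\in\cN_K$,
\[
\|w\|_{\cN_K^\tau}^2=\sum_{\ell=0}^\infty\beta_\ell^{-\tau}\langle w,\varphi_\ell\rangle_{L_\mu^2(D)}^2,\qquad\|w\|_\lambda^2=\sum_{\ell=0}^\infty(1+\lambda\beta_\ell^{-1})\langle w,\varphi_\ell\rangle_{L_\mu^2(D)}^2 .
\]
Since $x^\tau\le1+x$ for all $x\ge0$ and $\tau\in(0,1]$, taking $x=\lambda\beta_\ell^{-1}$ yields $\beta_\ell^{-\tau}\le\lambda^{-\tau}(1+\lambda\beta_\ell^{-1})$ termwise, hence $\|w\|_{\cN_K^\tau}^2\le\lambda^{-\tau}\|w\|_\lambda^2$. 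Inserting $w=e_{\boldsymbol{Y}}$ into the previous display gives $\|e_{\boldsymbol{Y}}\|_\lambda^2\le\lambda^{-\tau/2}\|\Delta_{\boldsymbol{Y}}-F\|_{\cN_K^{-\tau}}\|e_{\boldsymbol{Y}}\|_\lambda$, i.e.\ $\|e_{\boldsymbol{Y}}\|_\lambda^2\le\lambda^{-\tau}\|\Delta_{\boldsymbol{Y}}-F\|_{\cN_K^{-\tau}}^2$. Taking expectations and invoking the lemma above,
\[
\bbE[\|e_{\boldsymbol{Y}}\|_\lambda^2]\le\lambda^{-\tau}\,\bbE[\|\Delta_{\boldsymbol{Y}}-F\|_{\cN_K^{-\tau}}^2]=\frac{\langle K_\tau(\cdot,\cdot),f\rangle_{L_\mu^2(D)}-\|F\|_{\cN_K^{-\tau}}^2}{M\lambda^\tau}\le\frac{\langle K_\tau(\cdot,\cdot),f\rangle_{L_\mu^2(D)}}{M\lambda^\tau},
\]
which is the asserted bound.

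The argument is essentially routine; the only mild points of care are (i) checking that $\Delta_{\boldsymbol{Y}}-F$, viewed as an element of $\cN_K^{-\tau}\cong(\cN_K^\tau)'$, acts on $e_{\boldsymbol{Y}}\in V_N$ exactly as in \eqref{eq:eq-for-var} — this follows from $V_N\subset\cN_K$ together with the absolute convergence of the series \eqref{eq:K-series}, which makes the expansion $v=\sum_\ell\langle v,\varphi_\ell\rangle_{L_\mu^2(D)}\varphi_\ell$ pointwise legitimate for $v\in V_N$ — and (ii) the elementary inequality $x^\tau\le1+x$ used to pass from $\|\cdot\|_{\cN_K^\tau}$ to $\lambda^{-\tau/2}\|\cdot\|_\lambda$. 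I do not expect any substantial obstacle.
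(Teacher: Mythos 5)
Your proof is correct and follows essentially the same route as the paper: test \eqref{eq:eq-for-var} with the error, use the $\cN_K^{-\tau}$--$\cN_K^\tau$ duality pairing, establish $\|\cdot\|_{\cN_K^\tau}\le\lambda^{-\tau/2}\|\cdot\|_\lambda$, and invoke the preceding lemma. The only (cosmetic) differences are that you prove the norm comparison by the termwise inequality $x^\tau\le 1+x$ rather than the paper's H\"older--Young argument, and you cancel $\|e_{\boldsymbol{Y}}\|_\lambda$ pathwise before taking expectations rather than applying Cauchy--Schwarz under the expectation; both variants are valid.
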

	
	\begin{proof}
		For $\tau\in(0,1]$ such that $\Delta_{\boldsymbol{Y}}\in\cN_{K}^{-\tau}$,
		\rv{choosing $v=f_{M,N;\boldsymbol{Y}}^{\lambda}-\bbE[f_{M,N;\boldsymbol{Y}}^{\lambda}]$ in~\eqref{eq:eq-for-var} and taking the expectation on both sides yields}
		\begin{align*}
			\mathbb{E}[\|f_{M,N;\boldsymbol{Y}}^{\lambda}-\bbE[f_{M,N;\boldsymbol{Y}}^{\lambda}] & \|_{\lambda}^{2}]\\
			\leq & \lambda^{-\tau/2}\mathbb{E}[\|\Delta_{\boldsymbol{Y}}-F\|_{\cN_{K}^{-\tau}}\lambda^{\tau/2}\|f_{M,N;\boldsymbol{Y}}^{\lambda}-\bbE[f_{M,N;\boldsymbol{Y}}^{\lambda}]\|_{\cN_{K}^{\tau}}].
		\end{align*}
		For any $v\in\cN_{K}$, from $1/\tau\in[1,\infty)$ we have
		\begin{align*}
			\lambda^{\tau}\|v\|_{\cN_{K}^{\tau}}^{2} & =\lambda^{\tau}\sum_{\ell=0}^{\infty}\frac{1}{\beta_{\ell}^{\tau}}\langle v,\varphi_{\ell}\rangle_{L_{\mu}^{2}(D)}^{2(\tau+(1-\tau))}\\
			& \leq\lambda^{\tau}\Biggl(\sum_{\ell=0}^{\infty}\frac{1}{\beta_{\ell}^{\tau\frac{1}{\tau}}}\langle v,\varphi_{\ell}\rangle_{L_{\mu}^{2}(D)}^{2}\Biggr)^{\tau}\Biggl(\sum_{\ell=0}^{\infty}\langle v,\varphi_{\ell}\rangle_{L_{\mu}^{2}(D)}^{2}\Biggr)^{1-\tau}\\
			& =\lambda^{\tau}\|v\|_{\cN_{K}}^{2\tau}\|v\|_{L_{\mu}^{2}(D)}^{2(1-\tau)}\leq\tau\lambda\|v\|_{\cN_{K}}^{2}+(1-\tau)\|v\|_{L_{\mu}^{2}(D)}^{2}\leq\|v\|_{\lambda}^{2},
		\end{align*}
		and thus 
		\[
		\|v\|_{\cN_{K}^{\tau}}\leq\lambda^{-\tau/2}\|v\|_{\lambda}.
		\]
		Hence, we obtain
		\begin{align*}
			\mathbb{E}[\|f_{M,N;\boldsymbol{Y}}^{\lambda}&-\bbE[f_{M,N;\boldsymbol{Y}}^{\lambda}]\|_{\lambda}^{2}]  \leq\lambda^{-\tau/2}\mathbb{E}[\|\Delta_{\boldsymbol{Y}}-F\|_{\cN_{K}^{-\tau}}\|f_{M,N;\boldsymbol{Y}}^{\lambda}-\bbE[f_{M,N;\boldsymbol{Y}}^{\lambda}]\|_{\lambda}]\\
			& \leq\lambda^{-\tau/2}\sqrt{\frac{\langle K_{\tau}(\cdot,\cdot),f\rangle_{L_{\mu}^{2}(D)}}{M}}\sqrt{\mathbb{E}[\|f_{M,N;\boldsymbol{Y}}^{\lambda}-\bbE[f_{M,N;\boldsymbol{Y}}^{\lambda}]\|_{\lambda}^{2}]},
		\end{align*}
		and thus
		\[
		\mathbb{E}[\|f_{M,N;\boldsymbol{Y}}^{\lambda}-\bbE[f_{M,N;\boldsymbol{Y}}^{\lambda}]\|_{\lambda}^{2}]\leq\frac{\langle K_{\tau}(\cdot,\cdot),f\rangle_{L_{\mu}^{2}(D)}}{M\lambda^{\tau}}.
		\]
		The proof is now complete.
	\end{proof}
	
	\subsection{MISE estimate}
	
	We summarise the discussions so far as a theorem.
	\begin{theorem}
		\label{thm:general-summary}Let $f\in\mathcal{N}_{K}$ be the target
		density function and let $f_{M,N;\boldsymbol{Y}}^{\lambda}\in V_{N}$
		satisfy~\eqref{eq:eq-disc}. Moreover, let $\mathscr{P}_{N}\colon\mathcal{N}_{K}\to V_{N}$
		be the $\mathcal{N}_{K}$-orthogonal projection. Suppose that for
		some $\tau\in(0,1]$ we have $\Delta_{\boldsymbol{Y}}\in\cN_{K}^{-\tau}$,
		and that $f$ satisfies $\langle K_{\tau}(\cdot,\cdot),f\rangle_{L_{\mu}^{2}(D)}<\infty$
		with $K_{\tau}(x_{1},x_{2})=\sum_{\ell=0}^{\infty}\beta_{\ell}^{\tau}\varphi_{\ell}(x_{1})\varphi_{\ell}(x_{2})$.
		Then, we have the MISE estimate
		\begin{equation}
			\bbE\Bigl[\!\,\int_{D}\!
			|f_{M,N;\boldsymbol{Y}}^{\lambda}(x)-f(x)|^{2}\mathrm{d}\mu(x)\!\,\Bigr]
			\!\leq\!
			\|\mathscr{P}_{N}f-f\|_{\!\,L_{\mu}^{2}(D)}^{2}+\lambda\|f\|_{K}^{2}+\frac{\langle K_{\tau}(\cdot,\cdot),f\rangle_{L_{\mu}^{2}(D)}}{M\lambda^{\tau}}.\label{eq:general-result}
		\end{equation}
		Suppose furthermore $f\in\mathcal{N}_{K}^{2}$. Then we also have
		\begin{align}
			\bbE\Bigl[\int_{D}\!&|f_{M,N;\boldsymbol{Y}}^{\lambda}(x)-f(x)|^{2}\mathrm{d}\mu(x)\Bigr]\notag\\
			&\leq
			3\|\mathscr{P}_{N}f-f\|_{L_{\mu}^{2}(D)}^{2}+8\lambda^{2}\|f\|_{\mathcal{N}_{K}^{2}}^{2}+\frac{\langle K_{\tau}(\cdot,\cdot),f\rangle_{L_{\mu}^{2}(D)}}{M\lambda^{\tau}}.\label{eq:general-result-2}
		\end{align}
	\end{theorem}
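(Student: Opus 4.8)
The plan is to assemble the theorem directly from the bias and variance estimates already established, using the bias--variance decomposition \eqref{eq:MISE-decomp} as the organising identity. First I would recall that for any $v\in\mathcal{N}_K$ and $\lambda>0$ the definition \eqref{eq:ip-lambda} gives $\|v\|_{L_\mu^2(D)}^2\leq\|v\|_{L_\mu^2(D)}^2+\lambda\|v\|_K^2=\|v\|_\lambda^2$, so the $L_\mu^2(D)$-norm is always dominated by the $\|\cdot\|_\lambda$-norm. This is the only ingredient needed to transfer the variance bound of Proposition~\ref{prop:var-estim}, which is stated in the $\|\cdot\|_\lambda$-norm, to the $L_\mu^2(D)$-norm appearing in the MISE.

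For the first claim \eqref{eq:general-result}, I would bound the two terms on the right-hand side of \eqref{eq:MISE-decomp} separately. The squared bias term $\|\mathbb{E}[f_{M,N;\boldsymbol{Y}}^{\lambda}]-f\|_{L_\mu^2(D)}^2$ is controlled by Proposition~\ref{prop:sq-bias} by $\|\mathscr{P}_Nf-f\|_{L_\mu^2(D)}^2+\lambda\|f\|_K^2$, which requires only $f\in\mathcal{N}_K$. The variance term $\mathbb{E}[\|f_{M,N;\boldsymbol{Y}}^{\lambda}-\mathbb{E}[f_{M,N;\boldsymbol{Y}}^{\lambda}]\|_{L_\mu^2(D)}^2]$ is first bounded by $\mathbb{E}[\|f_{M,N;\boldsymbol{Y}}^{\lambda}-\mathbb{E}[f_{M,N;\boldsymbol{Y}}^{\lambda}]\|_\lambda^2]$ using the norm domination above, and then by Proposition~\ref{prop:var-estim} (whose hypotheses $\Delta_{\boldsymbol{Y}}\in\cN_K^{-\tau}$ and $\langle K_\tau(\cdot,\cdot),f\rangle_{L_\mu^2(D)}<\infty$ are exactly those assumed in the theorem) by $\langle K_\tau(\cdot,\cdot),f\rangle_{L_\mu^2(D)}/(M\lambda^\tau)$. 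Adding the two contributions yields \eqref{eq:general-result}.

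For the second claim \eqref{eq:general-result-2}, I would proceed identically, except that under the stronger hypothesis $f\in\mathcal{N}_K^2$ the squared bias is instead bounded using the second-order bias estimate (the proposition preceding the variance section) by $3\|\mathscr{P}_Nf-f\|_{L_\mu^2(D)}^2+8\lambda^2\|f\|_{\mathcal{N}_K^2}^2$; the variance term is handled exactly as before. Summing gives \eqref{eq:general-result-2}.

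There is essentially no obstacle here: the theorem is a summary that packages the three preceding results, and the only point requiring a word of justification is the passage from the $\|\cdot\|_\lambda$-norm variance bound to the $L_\mu^2(D)$-norm, which follows from $\lambda>0$. I would note in passing that, since Proposition~\ref{prop:var-estim} is stated for $\lambda\in(0,1]$, the same restriction is implicitly in force here.
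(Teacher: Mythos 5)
Your proposal is correct and matches the paper exactly: the paper gives no separate proof of Theorem~\ref{thm:general-summary}, presenting it as a direct summary obtained by inserting Proposition~\ref{prop:sq-bias} (respectively the second-order bias proposition) and Proposition~\ref{prop:var-estim} into the decomposition \eqref{eq:MISE-decomp}, with the variance transferred to the $L_\mu^2(D)$-norm via $\|\cdot\|_{L_\mu^2(D)}\leq\|\cdot\|_\lambda$. Your remark about the implicit restriction $\lambda\in(0,1]$ inherited from Proposition~\ref{prop:var-estim} is a fair and accurate observation.
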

	
	\subsection{Limiting case \texorpdfstring{$N\to\infty$}{} and link with kernel density estimation}
	
	In principle, the projection error in the right hand side of \eqref{eq:general-result}
	and \eqref{eq:general-result-2} is independent of the other two terms.
	Thus, it may be natural to pose the problem in $\mathcal{N}_{K}$
	rather than in $V_{N}$: Find $f_{M;\boldsymbol{Y}}^{\lambda}\in\mathcal{N}_{K}$
	such that 
	\[
	\langle f_{M;\boldsymbol{Y}}^{\lambda},v\rangle_{\lambda}=\frac{1}{M}\sum_{m=1}^{M}v(Y_{m})\quad\text{for all }v\in\mathcal{N}_{K}.
	\]
	Then, from
	\begin{align*}
		\langle f_{M;\boldsymbol{Y}}^{\lambda},\varphi_{\ell}\rangle_{L_{\mu}^{2}(D)}+\frac{\lambda}{\beta_{\ell}}\langle f_{M;\boldsymbol{Y}}^{\lambda},\varphi_{\ell}\rangle_{L_{\mu}^{2}(D)} & =\frac{1}{M}\sum_{m=1}^{M}\varphi_{\ell}(Y_{m})\text{\ensuremath{\quad} for }\ell\geq0,
	\end{align*}
	the solution is given by a linear combination 
	\[
	f_{M;\boldsymbol{Y}}^{\lambda}=\frac{1}{M}\sum_{m=1}^{M}K_{\lambda}^{*}(Y_{m},\cdot)
	\]
	of the kernel $K_{\lambda}^{*}(x,x'):=\sum_{\ell=0}^{\infty}\frac{\beta_{\ell}}{\beta_{\ell}+\lambda}\varphi_{\ell}(x)\varphi_{\ell}(x')$,
	$x,x'\in D$, which is of the form similar to the standard kernel
	density estimation; see for example \cite{Scott.D_2015_MultivariateDensityEstimation}.
	Note that we indeed have $f_{M;\boldsymbol{Y}}^{\lambda}\in\mathcal{N}_{K}$:
	from
	\[
	\frac{\beta_{\ell}}{\beta_{\ell}+\lambda}\langle\varphi_{\ell}(Y_{m}(\omega))\varphi_{\ell},\varphi_{\ell}\rangle_{L_{\mu}^{2}(D)}=\frac{\beta_{\ell}}{\beta_{\ell}+\lambda}\varphi_{\ell}(Y_{m}(\omega))
	\]
	provided $\lambda>0$, for any realization $\boldsymbol{y}=(y_{1},\dots,y_{M})=\boldsymbol{Y}(\omega)$
	we have
	\begin{align*}
		\|f_{M;\boldsymbol{y}}^{\lambda}\|_{K}^{2} & =\sum_{\ell=0}^{\infty}\frac{\beta_{\ell}}{(\beta_{\ell}+\lambda)^{2}}\frac{1}{M}\sum_{m=1}^{M}\varphi_{\ell}(y_{m})^{2}\leq\frac{1}{\lambda^{2}}\frac{1}{M}\sum_{m=1}^{M}K(y_{m},y_{m})<\infty.
	\end{align*}
	Notice however that $f_{M;\boldsymbol{Y}}^{\lambda}$ is not a linear
	combination of the kernel $K$, and in general the kernel $K_{\lambda}^{*}$
	cannot be given in a closed form, even if $K$ is. Hence, in practice
	it is much more computationally efficient to seek the approximation
	in $V_{N}$.
	
	The problem posed in $\mathcal{N}_{K}$ in the discussion above can
	be seen as a limiting case of our finite dimensional setting with
	$N\to\infty$ and a dense subset $X=\{x_{j}\}_{j\in\mathbb{N}}$ of
	$D$ in the following sense. Suppose that $D$ is a separable metric
	space, $\mathcal{B}$ is a corresponding Borel $\sigma$-algebra,
	and that $\mu$ is a $\sigma$-finite measure on $(D,\mathcal{B})$.
	Then, the Hilbert space of equivalence classes of square integrable
	functions $L_{\mu}^{2}(D)$ is separable; see for example~\cite[p. 92]{Doob.J.L_1994_Book_MeasureTheory}.
	Moreover, assume that the positive definite kernel $K\colon D\times D\to\mathbb{R}$
	is continuous. Then, there exists a dense subset $\{q_{j}\}_{j\in\mathbb{N}}\subset D$
	such that $\overline{\mathrm{span}\{\{K(q_{j},\cdot)\}_{j\in\mathbb{N}}\}}=\mathcal{N}_{K}$,
	in particular $\mathcal{N}_{K}$ is separable as shown in the next
	proposition.
	\begin{proposition}
		Under the assumptions above on $(D,\mathcal{B},\mu)$, there exists
		a subset $\{q_{j}\}_{j\in\mathbb{N}}\subset D$ such that $\overline{\mathrm{span}\{\{K(q_{j},\cdot)\}_{j\in\mathbb{N}}\}}=\mathcal{N}_{K}$,
		where the closure is taken with respect to the $\mathcal{N}_{K}$-norm.
	\end{proposition}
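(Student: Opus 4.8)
The plan is to take $\{q_j\}_{j\in\mathbb{N}}$ to be a countable dense subset of $D$, which exists because $D$ is a separable metric space, and to show that the closed subspace $W:=\overline{\spn\{K(q_j,\cdot)\mid j\in\mathbb{N}\}}$ of $\mathcal{N}_{K}$ already contains every kernel section $K(x,\cdot)$, $x\in D$. Since, by the standard construction of the RKHS (see e.g.~\cite{Wendland.H_2004_book}), $\spn\{K(x,\cdot)\mid x\in D\}$ is dense in $\mathcal{N}_{K}$, this immediately forces $W=\mathcal{N}_{K}$, and the separability statement follows as well.

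First I would record the continuity of the canonical feature map $\Phi\colon D\to\mathcal{N}_{K}$, $\Phi(x):=K(x,\cdot)$. By the reproducing property $\langle K(x,\cdot),K(x',\cdot)\rangle_{K}=K(x,x')$, so that for $x,x'\in D$,
\[
\|\Phi(x)-\Phi(x')\|_{K}^{2}=K(x,x)-2K(x,x')+K(x',x'),
\]
and since $K\colon D\times D\to\mathbb{R}$ is continuous, the right-hand side tends to $0$ as $x'\to x$ in $D$; hence $\Phi$ is (sequentially) continuous. Then, fixing $x\in D$ arbitrarily, density of $\{q_j\}_{j\in\mathbb{N}}$ in the metric space $D$ gives a sequence $q_{j_n}\to x$, and the displayed identity yields $K(q_{j_n},\cdot)\to K(x,\cdot)$ in $\mathcal{N}_{K}$. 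Each $K(q_{j_n},\cdot)$ lies in $W$, and $W$ is closed, so $K(x,\cdot)\in W$. As $x$ was arbitrary we obtain $\spn\{K(x,\cdot)\mid x\in D\}\subset W$; taking closures, $\mathcal{N}_{K}=\overline{\spn\{K(x,\cdot)\mid x\in D\}}\subset W\subset\mathcal{N}_{K}$, i.e.~$W=\mathcal{N}_{K}$.

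For the separability claim I would note that the set of finite linear combinations of $\{K(q_j,\cdot)\}_{j\in\mathbb{N}}$ with rational coefficients is countable and dense in $W=\mathcal{N}_{K}$. I do not expect a genuine obstacle here: the only mildly delicate points are recalling that the linear span of kernel sections is dense in $\mathcal{N}_{K}$ and that, $D$ being metric, topological density translates into sequential approximation; both are standard. Note that the assumptions on $\mu$ (and its $\sigma$-finiteness) play no role in this particular proposition—they enter the surrounding discussion only through the separability of $L_{\mu}^{2}(D)$.
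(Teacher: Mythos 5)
Your proof is correct, but it takes a different route from the paper's. The paper argues by orthogonal complement: writing $\mathcal{M}$ for the closed span of $\{K(q_j,\cdot)\}_{j\in\mathbb{N}}$, it takes $g\in\mathcal{M}^{\perp}$, uses the reproducing property to get $g(q_j)=\langle g,K(q_j,\cdot)\rangle_K=0$ for all $j$, invokes the continuity of elements of $\mathcal{N}_K$ (a consequence of the continuity of $K$) to conclude $g\equiv 0$ on $D$, hence $\mathcal{M}^{\perp}=\{0\}$ and $\mathcal{M}=\mathcal{N}_K$. You instead prove the continuity of the feature map $x\mapsto K(x,\cdot)$ via the identity $\|K(x,\cdot)-K(x',\cdot)\|_K^2=K(x,x)-2K(x,x')+K(x',x')$, deduce that the closed span $W$ already contains every kernel section $K(x,\cdot)$, and then invoke the standard fact that $\spn\{K(x,\cdot)\mid x\in D\}$ is dense in $\mathcal{N}_K$. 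The two arguments are essentially dual: the continuity estimate you prove for the feature map is exactly what underlies the paper's claim that RKHS elements are continuous (via $|g(x)-g(x')|\leq\|g\|_K\,\|K(x,\cdot)-K(x',\cdot)\|_K$). The paper's version is self-contained in that it does not need the density of the full span of kernel sections as an external input, only the reproducing property; yours makes the feature-map continuity explicit and gives the containment $K(x,\cdot)\in W$ constructively, at the cost of citing the density of $\spn\{K(x,\cdot)\mid x\in D\}$ from the standard RKHS construction. Both are valid, and your closing remarks (rational combinations give separability of $\mathcal{N}_K$; the measure $\mu$ is not used in this proposition) are accurate.
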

	
	\begin{proof}
		Since $D$ is assumed to be separable, there exists a subset $\{q_{j}\}_{j\in\mathbb{N}}\subset D$
		that approximates all $x\in D$. Consider the subset $\{K(q_{j},\cdot)\}_{j\in\mathbb{N}}\subset\mathcal{N}_{K}$.
		Let $\mathcal{M}:=\overline{\mathrm{span}\{\{K(q_{j},\cdot)\}_{j\in\mathbb{N}}\}}\subset\mathcal{N}_{K}$.
		Since $\mathcal{M}$ is a closed subspace of $\mathcal{N}_{K}$, we
		have $\mathcal{N}_{K}=\mathcal{M}\oplus\mathcal{M}^{\perp}$. We will
		show $\mathcal{M}^{\perp}=\{0\}$. Indeed, for $g\in\mathcal{M}^{\perp}$,
		we have
		\[
		0=\langle g,K(q_{j},\cdot)\rangle_{K}=g(q_{j})\qquad\text{for any }j\in\mathbb{N}.
		\]
		Now, observe that the continuity of the kernel implies that all elements
		in $\mathcal{N}_{K}$ are (sequentially) continuous on $D$. Therefore,
		we have $g(x)=0$ for all $x\in D$. Hence, $\mathcal{M}^{\perp}=\{0\}$.
	\end{proof}
	
	\section{Application to densities in weighted Korobov spaces\label{sec:Example}}
	
	In this section, we will apply the theory established in Section~\ref{sec:gen-theory}
	to the case where the kernel defines the so-called Korobov space.
	Throughout this section, we assume that the density function $f$
	is defined on the $d$-dimensional unit hypercube $[0,1]^{d}\subset\mathbb{R}^{d}$,
	and that the density is with respect to the uniform measure. 
	\rv{This choice of reference measure is due to the definition of the Korobov space, whose norm is based on the standard $L^2$-inner product with respect to the uniform measure.}  
	Since
	in this section $D=[0,1]^{d}$ is a subset of $\mathbb{R}^{d}$, we
	will use the bold symbol $\boldsymbol{x}$ to denote a point in $[0,1]^{d}$.
	
	Let a smoothness parameter $\alpha>1$ be given. For non-negative parameters
	$\boldsymbol{\gamma}=\{\gamma_{\mathfrak{u}}\}_{\mathfrak{u}\subset\mathbb{N}}$,
	which we call \emph{weights}, we consider the Korobov kernel
	\begin{equation}
		K_{\alpha}^{\mathrm{kor}}(\boldsymbol{x},\boldsymbol{x}')=\sum_{\boldsymbol{h}\in\mathbb{Z}^{d}}r(\boldsymbol{h},\gamma)^{-1}\mathrm{e}^{2\pi i\boldsymbol{h}\cdot(\boldsymbol{x}-\boldsymbol{x}')},\quad\boldsymbol{x},\boldsymbol{x}'\in[0,1]^{d},\label{eq:series-kor}
	\end{equation}
	where $\boldsymbol{h}\cdot\boldsymbol{x}$ denotes the Euclidean inner
	product $\boldsymbol{h}\cdot\boldsymbol{x}=\sum_{j=1}^{d}h_{j}x_{j}$,
	and
	\[
	r(\boldsymbol{h},\gamma):=\left\{ \begin{array}{ll}
		1, & \text{ if }\boldsymbol{h}=(0,\ldots,0)\\
		\gamma_{\supp(\boldsymbol{h})}^{-1}\prod_{\supp(\boldsymbol{h})}\left|h_{j}\right|^{\alpha}, & \text{ if }\boldsymbol{h}\neq(0,\ldots,0)
	\end{array},\right.
	\]
	with $\supp\boldsymbol{h}:=\{1\leq j\leq d\mid h_{j}\neq0\}$. We
	take $\gamma_{\emptyset}:=1$, so that the norm of a constant function
	in the corresponding reproducing kernel Hilbert space matches its
	$L^{2}$ norm. We denote the corresponding reproducing kernel Hilbert
	space by $\mathcal{N}_{\mathrm{kor},\alpha}$, which consists of $1$-periodic
	functions on $\mathbb{R}^{d}$ with a suitable smoothness governed
	by the parameter $\alpha>1$. This kernel can be rewritten as
	\[
	K_{\alpha}^{\mathrm{kor}}(\boldsymbol{x},\boldsymbol{x}')=1+\sum_{\emptyset\neq\mathfrak{u}\subseteq\{1,\ldots,d\}}\gamma_{\mathfrak{u}}K_{\alpha}^{\mathrm{kor}}(\boldsymbol{x}_{\mathfrak{u}},\boldsymbol{x}'_{\mathfrak{u}}),
	\]
	with
	\[
	K_{\alpha}^{\mathrm{kor}}(\boldsymbol{x}_{\mathfrak{u}},\boldsymbol{x}'_{\mathfrak{u}})=\prod_{j\in\mathfrak{u}}\left(\sum_{h\in\mathbb{Z}\setminus\{0\}}\frac{\mathrm{e}^{2\pi ih(x_{j}-x'_{j})}}{|h|^{\alpha}}\right)=\prod_{j\in\mathfrak{u}}2\sum_{h=1}^{\infty}\frac{\cos(2\pi h(x_{j}-x'_{j}))}{h^{\alpha}}.
	\]
	\sloppy{For $\alpha>1$, the kernel $K_{\alpha}^{\mathrm{kor}}$ is well defined
		for all $\boldsymbol{x},\boldsymbol{x}'\in[0,1]^{d}$ and satisfies
		$\sup_{\boldsymbol{x},\boldsymbol{x}'\in[0,1]^{d}}K_{\alpha}^{\mathrm{kor}}(\boldsymbol{x},\boldsymbol{x}')<\infty$, and with}
	\[\langle v,e^{2\pi i\boldsymbol{h}\cdot\textrm{-}}\rangle_{L^{2}([0,1]^{d})}:=\int_{[0,1]^{d}}v(\boldsymbol{x})e^{-2\pi i\boldsymbol{h}\cdot\boldsymbol{x}}\mathrm{d}\boldsymbol{x},\]
	the corresponding norm is given by
	\[
	\|v\|_{\mathrm{kor},\alpha}=\biggl(\sum_{\boldsymbol{h}\in\mathbb{Z}^{d}}r(\boldsymbol{h},\gamma)|\langle v,e^{2\pi i\boldsymbol{h}\cdot\textrm{-}}\rangle_{L^{2}([0,1]^{d})}|^{2}\biggr)^{1/2},
	\]
	where the series is absolutely convergent.
	
	If $\alpha$ is an even integer, the expression of the kernel and
	the norm simplifies. First, the reproducing kernel $K_{\alpha}^{\mathrm{kor}}(\boldsymbol{x},\boldsymbol{x}')$
	is related to the Bernoulli polynomials $B_{\alpha}$: for $\alpha$
	even, we have
	\[
	B_{\alpha}(x)=\frac{(-1)^{\frac{\alpha}{2}+1}\alpha!}{(2\pi)^{\alpha}}\sum_{h\in\mathbb{Z}\setminus\{0\}}\frac{e^{2\pi ihx}}{|h|^{\alpha}}\quad\text{for any }\ x\in[0,1],
	\]
	so that
	\[
	K_{\alpha}^{\mathrm{kor}}(\boldsymbol{x},\boldsymbol{x}')=1+\sum_{\emptyset\neq\mathfrak{u}\subseteq\{1,\ldots,d\}}\gamma_{\mathfrak{u}}\left(\frac{(2\pi)^{\alpha}}{(-1)^{\frac{\alpha}{2}+1}\alpha!}\right)^{|u|}\prod_{j\in\mathfrak{u}}B_{\alpha}(|\{x_{j}-x'_{j}\}|),
	\]
	where $\{x\}$ denotes the fractional part of $x$. Moreover, the
	norm $\|\cdot\|_{\mathrm{kor},\alpha}$ can be rewritten as the norm
	in an ``unanchored'' weighted Sobolev space of dominating mixed
	smoothness of order $\alpha/2$,
	\begin{align*}
		&\|v\|_{\mathrm{kor},\alpha}\\
		&=\sqrt{\sum_{\mathfrak{u}\subseteq\{1,\ldots,d\}}\frac{1}{(2\pi)^{\alpha|\mathfrak{u}|}\gamma_{\mathfrak{u}}}\int_{[0,1]^{|\mathfrak{u}|}}\!\!\;\bigg|\int_{[0,1]^{d-|\mathfrak{u}|}}\bigg(\prod_{j\in\mathfrak{u}}\frac{\partial^{\alpha/2}}{\partial x_{j}^{\alpha/2}}\bigg)v(\bsx)\,\mathrm{d}\bsx_{-\mathfrak{u}}\bigg|^{2}\,\mathrm{d}\bsx_{\mathfrak{u}}},
	\end{align*}
	where $\bsx_{\mathfrak{u}}$ denotes the components of $\bsx$ with
	indices that belong to the subset $\mathfrak{u}$, and $\bsx_{-\mathfrak{u}}$
	denotes the components that do not belong to $\mathfrak{u}$, and
	$|\mathfrak{u}|$ denotes the cardinality of $\mathfrak{u}$. See
	for example \cite{Novak.E_Wozniakowski_book_1} for more details on
	weighted Korobov spaces.
	
	We note that if the weights $\gamma_{\mathfrak{u}}$ are of the product
	form, i.e.
	\[
	\gamma_{\mathfrak{u}}=\prod_{j\in\mathfrak{u}}\gamma_{j}\qquad\text{for some positive }\text{\ensuremath{\gamma_{j}},\ensuremath{\quad\text{for }j=1,\dots,d,}}
	\]
	then the kernel $K_{\alpha}^{\mathrm{kor}}(\cdot,\cdot)$ can be written
	as the product of kernels:
	\begin{equation}
		K_{\alpha}^{\mathrm{kor}}(\rv{\boldsymbol{x},\boldsymbol{x}'})=\prod_{j=1}^{d}K_{1,\alpha,\gamma_{j}}^{\mathrm{kor}}(x_{j},x'_{j}),\label{eq:prod-Kor}
	\end{equation}
	with $K_{1,\alpha,\gamma_{j}}^{\mathrm{kor}}(x,x')=1+\gamma_{j}\sum_{h\in\mathbb{Z}\setminus\{0\}}\frac{e^{2\pi ih(x-x')}}{|h|^{\alpha}}$.
	
	Under this setting, the equation \eqref{eq:eq-disc} is equivalent
	to the following: Find $f_{M,N;\boldsymbol{Y}}^{\lambda}=\sum_{n=1}^{N}c_{n}K(\boldsymbol{x}_{n},\cdot)$
	such that
	\[
	\langle f_{M,N;\boldsymbol{Y}}^{\lambda},K_{\alpha}^{\mathrm{kor}}(\boldsymbol{x}_{k},\cdot)\rangle_{L^{2}([0,1]^{d})}+\lambda f_{M,N;\boldsymbol{Y}}^{\lambda}(\boldsymbol{x}_{k})=\frac{1}{M}\sum_{m=1}^{M}K_{\alpha}^{\mathrm{kor}}(\boldsymbol{x}_{k},Y_{m}(\omega)) 
	\]
	for $k=1,\dots,N$. 
	The linear system for $\boldsymbol{c}=(c_{1},\dots,c_{n})^{\top}$
	is given by \eqref{eq:eq-matrix-vector}, which can be written in
	a closed form for $\alpha$ even. Indeed, we have \[\langle K_{\alpha}^{\mathrm{kor}}(\boldsymbol{x}_{j},\cdot),K_{\alpha}^{\mathrm{kor}}(\boldsymbol{x}_{k},\cdot)\rangle_{L^{2}([0,1]^{d})}=\tilde{K}_{\alpha}^{\mathrm{kor}}(\boldsymbol{x}_{j},\boldsymbol{x}_{k})\]
	with
	\begin{equation}
		\tilde{K}_{\alpha}^{\mathrm{kor}}(\boldsymbol{x},\boldsymbol{x}'):=\sum_{\boldsymbol{h}\in\mathbb{Z}^{d}}r(\boldsymbol{h},\gamma)^{-2}e^{2\pi i\boldsymbol{h}\cdot(\boldsymbol{x}-\boldsymbol{x}')},\label{eq:def-lat-Ktilde}
	\end{equation}
	which can be written in a closed form with $B_{2\alpha}$.
	
	As the point set $\{\boldsymbol{x}_{k}\}_{k=1}^{N}$, we will consider
	the so-called \emph{rank-$1$ lattice} points. A rank-$1$ lattice
	point set $\{\boldsymbol{x}_{k}\}_{k=1}^{N}$ is given by
	\begin{equation}
		\boldsymbol{x}_{k}=\biggl\{\frac{k\boldsymbol{z}}{N}\biggr\}\quad\text{for }k=1,\dots,N,\label{eq:def-lat-pts}
	\end{equation}
	where $\boldsymbol{z}\in\{1,\dots,N\}^{d}$, and the braces around
	the vector of length $d$ indicate that each component of the vector
	is to be replaced by its fractional part. Because of the lattice structure
	of these points, the left-hand side of the equation \eqref{eq:eq-matrix-vector}
	but with $K_{\alpha}^{\mathrm{kor}}$ in place of $K$ becomes a circulant
	matrix, and thus the equation can be solved fast using the Fast Fourier
	Transform. See \cite[Section 2.2]{KaarniojaEtAl.V_2021_FastApproximationPeriodic}
	for an analogous argument.
	
	\begin{sloppypar}
		As stated in Proposition~\ref{prop:sq-bias}, the squared bias can
		be bounded by the $\mathcal{N}_{\mathrm{kor},\alpha}$-orthogonal
		projection error to the space spanned by $N$ functions $K_{\alpha}^{\mathrm{kor}}(\boldsymbol{x}_{k},\cdot)$\rv{,} $k=1,\dots,N$. 
		Under the setting of this section, the projection is given by the
		kernel interpolation. Notice that, given $N$, the integer vector
		$\boldsymbol{z}$ completely determines the lattice points \eqref{eq:def-lat-pts},
		and thus the corresponding kernel interpolant. In \cite{KaarniojaEtAl.V_2021_FastApproximationPeriodic},
		the present authors and co-authors obtained the following result on
		the choice of $\boldsymbol{z}$ and the resulting interpolation error,
		which we re-state here in our context.
	\end{sloppypar}
	\begin{proposition}
		\label{prop:interp-err-periodic}
		Given $d\geq1$, $\alpha>1$, weights
		$(\gamma_{\mathfrak{u}})_{\mathfrak{u}\subset\mathbb{N}}$ with $\gamma_{\emptyset}=1$,
		and prime $N$, a generating vector $\boldsymbol{z}$ can be constructed
		by a greedy algorithm called the component-by-component construction~\cite{Cools.R_etal_2020_LatticeAlgorithmsMultivariate,Cools.R_KNS_2021_FastCBC_MathComp}
		so that the $L^{2}$-approximation error of the kernel interpolant
		$\mathscr{I}_{N}f$ of the density $f\in\mathcal{N}_{\mathrm{kor},\alpha}$
		using $\boldsymbol{z}$ satisfies
		\[
		\|\mathscr{I}_{N}f-f\|_{L^{2}([0,1]^{d})}\leq C_{\alpha,\delta,d}\|f\|_{\mathrm{kor},\alpha}\frac{1}{N^{\alpha/4-\delta}}\quad\text{for every }\ \delta\in(0,\tfrac{\alpha}{4}),
		\]
		with  $C_{\alpha,\delta,d}:=\rv{C_{\alpha,\delta,d}\bigl((\gamma_{\mathfrak{u}})_{\mathfrak{u}\subset\mathbb{N}}\bigr):=}\big(\sum_{\mathfrak{\mathfrak{u}}\subset{\{1,\dots,d\}}}\max(|\mathfrak{u}|,1)\,\gamma_{\mathfrak{u}}^{\frac{1}{\alpha-4\delta}}\,[2\zeta\big(\tfrac{\alpha}{\alpha-4\delta}\big)]^{|\mathfrak{u}|}\big)^{\alpha-4\delta}$. 
		Here, $\zeta(x):=\sum_{k=1}^{\infty}k^{-x}$, $x>1$, denotes the
		Riemann zeta function. 
		The constant $C_{\alpha,\delta,d}$ depends on $\delta$ but can be bounded independently of $d$ provided that \rv{the weights satisfy} \begin{equation}\label{eq:periodic-thm-summability-cond}
			\sum_{\substack{\mathfrak{\mathfrak{u}}\subset\bbN\\
					\,|\mathfrak{u}|<\infty
				}
			}\max(|\mathfrak{u}|,1)\,\gamma_{\mathfrak{u}}^{\frac{1}{\alpha-4\delta}}\,[2\zeta\big(\tfrac{\alpha}{\alpha-4\delta}\big)]^{|\mathfrak{u}|}<\infty.
		\end{equation}
	\end{proposition}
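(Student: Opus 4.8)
\emph{Approach.} The first part of the statement — the component-by-component (CBC) construction and the rate $N^{-(\alpha/4-\delta)}$ with the explicit constant $C_{\alpha,\delta,d}$ — is a restatement, in the present notation, of the main approximation result of \cite{KaarniojaEtAl.V_2021_FastApproximationPeriodic}; only the closing sentence on dimension-independence of $C_{\alpha,\delta,d}$ requires a short separate argument. The plan is therefore: (i) check that the hypotheses here — $1$-periodic density $f\in\mathcal{N}_{\mathrm{kor},\alpha}$ on $[0,1]^d$, uniform reference measure, weights $(\gamma_{\mathfrak{u}})_{\mathfrak{u}\subset\mathbb{N}}$ with $\gamma_\emptyset=1$, prime $N$, smoothness $\alpha>1$ — coincide with those of \cite{KaarniojaEtAl.V_2021_FastApproximationPeriodic}, so that the interpolation-error bound transfers verbatim; and (ii) bound $C_{\alpha,\delta,d}$ independently of $d$ under \eqref{eq:periodic-thm-summability-cond}.

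\emph{Step (i): the interpolation error.} Here one invokes \cite{KaarniojaEtAl.V_2021_FastApproximationPeriodic}. The mechanism behind the cited estimate is that, owing to the circulant (aliasing) structure of rank-$1$ lattice interpolation, $\|\mathscr{I}_N f-f\|_{L^2([0,1]^d)}$ is controlled by $\|f\|_{\mathrm{kor},\alpha}$ times a lattice-dependent worst-case quantity $e_N(\boldsymbol{z})$, and a greedy CBC search over $\boldsymbol{z}\in\{1,\dots,N\}^d$ \cite{Cools.R_etal_2020_LatticeAlgorithmsMultivariate,Cools.R_KNS_2021_FastCBC_MathComp} produces, for prime $N$, a generating vector with $e_N(\boldsymbol{z})\le C_{\alpha,\delta,d}^{1/2}\,N^{-(\alpha/4-\delta)}$ for every $\delta\in(0,\alpha/4)$. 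The exponent $\alpha/4$ (as opposed to the $\alpha/2$ of lattice quadrature) and the precise form of $C_{\alpha,\delta,d}$ — the power $\alpha-4\delta$ and the factors $[2\zeta(\tfrac{\alpha}{\alpha-4\delta})]^{|\mathfrak{u}|}$ — come from a Jensen's inequality applied with exponent $1/(\alpha-4\delta)\in[1/\alpha,\infty)$ inside the averaging step of the CBC induction. I would not reproduce that induction, since it is established in \cite{KaarniojaEtAl.V_2021_FastApproximationPeriodic} in exactly this setting; I would only record that, $K_\alpha^{\mathrm{kor}}$ being strictly positive definite (all Fourier coefficients $r(\boldsymbol{h},\gamma)^{-1}>0$, so that $\sum_{j,k}c_j c_k K_\alpha^{\mathrm{kor}}(\boldsymbol{x}_j,\boldsymbol{x}_k)=0$ forces the measure $\sum_j c_j\delta_{\boldsymbol{x}_j}$ to vanish), $\mathscr{I}_N f$ is well defined and equals the $\mathcal{N}_{\mathrm{kor},\alpha}$-orthogonal projection $\mathscr{P}_N f$ of Proposition~\ref{prop:sq-bias} — which is how this bound feeds into the bias estimate.

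\emph{Step (ii): dimension independence.} This is the only genuinely self-contained point. Set $t:=1/(\alpha-4\delta)\in(0,\infty)$ and $w_{\mathfrak{u}}:=\max(|\mathfrak{u}|,1)\,\gamma_{\mathfrak{u}}^{t}\,[2\zeta(\tfrac{\alpha}{\alpha-4\delta})]^{|\mathfrak{u}|}\ge 0$, so that $C_{\alpha,\delta,d}=\big(\sum_{\mathfrak{u}\subseteq\{1,\dots,d\}}w_{\mathfrak{u}}\big)^{\alpha-4\delta}$. The sets $\mathfrak{u}\subseteq\{1,\dots,d\}$ are precisely the finite subsets of $\mathbb{N}$ contained in $\{1,\dots,d\}$, so enlarging $d$ only appends further non-negative terms; hence $d\mapsto\sum_{\mathfrak{u}\subseteq\{1,\dots,d\}}w_{\mathfrak{u}}$ is non-decreasing and, under \eqref{eq:periodic-thm-summability-cond}, bounded above by $\sum_{\mathfrak{u}\subset\mathbb{N},\,|\mathfrak{u}|<\infty}w_{\mathfrak{u}}<\infty$. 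Consequently $C_{\alpha,\delta,d}\le\big(\sum_{\mathfrak{u}\subset\mathbb{N},\,|\mathfrak{u}|<\infty}w_{\mathfrak{u}}\big)^{\alpha-4\delta}=:C_{\alpha,\delta}$, independent of $d$, which together with Step (i) completes the proof. The main obstacle, were one to insist on a fully self-contained argument, is Step (i): the exact rate and the exact constant rely on the worst-case-error identity for lattice kernel interpolation and on the CBC averaging/Jensen/induction machinery of \cite{KaarniojaEtAl.V_2021_FastApproximationPeriodic}; everything else is elementary.
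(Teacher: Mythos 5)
Your proposal matches the paper's treatment: the paper gives no proof of Proposition~\ref{prop:interp-err-periodic} either, simply re-stating the interpolation-error bound from \cite{KaarniojaEtAl.V_2021_FastApproximationPeriodic} in its own notation, exactly as you do in Step~(i), and the dimension-independence of $C_{\alpha,\delta,d}$ under \eqref{eq:periodic-thm-summability-cond} is the same elementary monotonicity observation you spell out in Step~(ii). Your added remarks on the CBC/Jensen mechanism and on $\mathscr{I}_N f=\mathscr{P}_N f$ are consistent with the paper's surrounding discussion, so there is nothing to correct.
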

	
	We now apply the variance estimate, Proposition~\ref{prop:var-estim}.
	Let $\mathcal{N}_{\mathrm{kor},\alpha}^{\tau}$ be the normed subspace
	of $L^{2}([0,1]^{d})$ defined by
	\[
	\mathcal{N}_{\mathrm{kor},\alpha}^{\tau}:=\{v\in L^{2}([0,1]^{d})\mid\|v\|_{\mathcal{N}_{\mathrm{kor},\alpha}^{\tau}}<\infty\},
	\]
	with $\|v\|_{\mathcal{N}_{\mathrm{kor},\alpha}^{\tau}}:=\big(\sum_{\boldsymbol{h}\in\mathbb{Z}^{d}}r(\boldsymbol{h},\gamma)^{\tau}|\langle v,e^{2\pi i\boldsymbol{h}\cdot\text{-}}\rangle_{L^{2}([0,1]^{d})}|^{2}\big)^{1/2}<\infty$,
	and let $\mathcal{N}_{\mathrm{kor},\alpha}^{-\tau}$ be the vector
	space of continuous linear functionals on $\mathcal{N}_{\mathrm{kor},\alpha}^{\tau}$.
	Note that, following an argument analogous to the proof of Proposition~\ref{prop:dual-isom},
	for $\Phi\in\mathcal{N}_{\mathrm{kor},\alpha}^{-\tau}=(\mathcal{N}_{\mathrm{kor},\alpha}^{\tau})'$
	we have
	\[
	\|\Phi\|_{\mathcal{N}_{\mathrm{kor},\alpha}^{-\tau}}=\Bigl(\sum_{\boldsymbol{h}\in\mathbb{Z}^{d}}r(\boldsymbol{h},\gamma)^{-\tau}|\Phi(e^{2\pi i\boldsymbol{h}\cdot\text{-}})|^{2}\Bigr)^{1/2}.
	\]
	To be able to invoke Proposition~\ref{prop:var-estim}, we first
	derive a lower bound for $\tau$ such that $\Delta_{\boldsymbol{Y}}$
	is in $\mathcal{N}_{\mathrm{kor},\alpha}^{-\tau}$.
	\begin{proposition}
		Let $\alpha>1$ and $\tau\in(0,1]$ be given. Then, the point evaluation
		functional $\delta_{\boldsymbol{x}}(v)=v(\boldsymbol{x})$ satisfies
		$\delta_{\boldsymbol{x}}\in\mathcal{N}_{\mathrm{kor},\alpha}^{-\tau}$
		for all $\boldsymbol{x}\in[0,1]^{d}$ if and only if $\tau>\frac{1}{\alpha}$.
	\end{proposition}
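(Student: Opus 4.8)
The plan is to compute the dual norm $\|\delta_{\boldsymbol{x}}\|_{\mathcal{N}_{\mathrm{kor},\alpha}^{-\tau}}$ in closed form and reduce the statement to the elementary fact that $\sum_{h\geq1}h^{-s}$ converges precisely when $s>1$. Since $\delta_{\boldsymbol{x}}(e^{2\pi i\boldsymbol{h}\cdot\textrm{-}})=e^{2\pi i\boldsymbol{h}\cdot\boldsymbol{x}}$ has modulus $1$ for every $\boldsymbol{h}\in\mathbb{Z}^{d}$, the norm formula recalled just before the proposition gives
\[
\|\delta_{\boldsymbol{x}}\|_{\mathcal{N}_{\mathrm{kor},\alpha}^{-\tau}}^{2}=\sum_{\boldsymbol{h}\in\mathbb{Z}^{d}}r(\boldsymbol{h},\gamma)^{-\tau},
\]
independently of $\boldsymbol{x}$. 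Grouping the sum according to the support $\mathfrak{u}=\supp(\boldsymbol{h})$ and using $r(\boldsymbol{h},\gamma)^{-\tau}=\gamma_{\mathfrak{u}}^{\tau}\prod_{j\in\mathfrak{u}}|h_{j}|^{-\alpha\tau}$ for $\boldsymbol{h}\neq\boldsymbol{0}$, I obtain
\[
\|\delta_{\boldsymbol{x}}\|_{\mathcal{N}_{\mathrm{kor},\alpha}^{-\tau}}^{2}=1+\sum_{\emptyset\neq\mathfrak{u}\subseteq\{1,\dots,d\}}\gamma_{\mathfrak{u}}^{\tau}\Bigl(\,2\sum_{h\geq1}h^{-\alpha\tau}\Bigr)^{|\mathfrak{u}|}.
\]
For fixed $d$ this is a sum of at most $2^{d}$ terms, so it is finite if and only if $\sum_{h\geq1}h^{-\alpha\tau}<\infty$, i.e.\ if and only if $\alpha\tau>1$; equivalently, $\alpha\tau>1$ is precisely the instance of condition~\eqref{eq:cond-rep} in the Korobov setting.

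For the \emph{if} direction ($\tau>1/\alpha$), I would promote the finiteness of $\sum_{\boldsymbol{h}}r(\boldsymbol{h},\gamma)^{-\tau}$ to a continuity estimate. Writing $\hat v(\boldsymbol{h}):=\int_{[0,1]^{d}}v(\boldsymbol{y})e^{-2\pi i\boldsymbol{h}\cdot\boldsymbol{y}}\,\mathrm{d}\boldsymbol{y}$ and applying the Cauchy--Schwarz inequality to the splitting $r(\boldsymbol{h},\gamma)^{-\tau/2}\cdot r(\boldsymbol{h},\gamma)^{\tau/2}$, one gets, for every trigonometric polynomial $v$,
\[
|v(\boldsymbol{x})|\leq\sum_{\boldsymbol{h}\in\mathbb{Z}^{d}}|\hat v(\boldsymbol{h})|\leq\Bigl(\sum_{\boldsymbol{h}\in\mathbb{Z}^{d}}r(\boldsymbol{h},\gamma)^{-\tau}\Bigr)^{1/2}\Bigl(\sum_{\boldsymbol{h}\in\mathbb{Z}^{d}}r(\boldsymbol{h},\gamma)^{\tau}|\hat v(\boldsymbol{h})|^{2}\Bigr)^{1/2}=\|\delta_{\boldsymbol{x}}\|_{\mathcal{N}_{\mathrm{kor},\alpha}^{-\tau}}\,\|v\|_{\mathcal{N}_{\mathrm{kor},\alpha}^{\tau}}.
\]
Because $\alpha\tau>1$, every $v\in\mathcal{N}_{\mathrm{kor},\alpha}^{\tau}$ has an absolutely convergent Fourier series and hence a continuous representative, so this estimate passes from the dense subspace of trigonometric polynomials to all of $\mathcal{N}_{\mathrm{kor},\alpha}^{\tau}$; thus $\delta_{\boldsymbol{x}}\in\mathcal{N}_{\mathrm{kor},\alpha}^{-\tau}$, with norm equal to the quantity computed above.

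For the \emph{only if} direction ($\tau\leq1/\alpha$), I would argue by contradiction. If $\delta_{\boldsymbol{x}}$ belonged to $\mathcal{N}_{\mathrm{kor},\alpha}^{-\tau}=(\mathcal{N}_{\mathrm{kor},\alpha}^{\tau})'$, then by the isometric identification analogous to Proposition~\ref{prop:dual-isom} its functional norm would equal $\bigl(\sum_{\boldsymbol{h}}r(\boldsymbol{h},\gamma)^{-\tau}|e^{2\pi i\boldsymbol{h}\cdot\boldsymbol{x}}|^{2}\bigr)^{1/2}$, which we computed to be $+\infty$ whenever $\sum_{h\geq1}h^{-\alpha\tau}$ diverges, i.e.\ whenever $\alpha\tau\leq1$ (this uses, as is standard, that not all weights $\gamma_{\mathfrak{u}}$ vanish, so that at least one nonempty term is a positive multiple of a divergent series). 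Alternatively, bypassing the isometry, one exhibits normalized polynomials $v_{n}:=g_{n}/\|g_{n}\|_{\mathcal{N}_{\mathrm{kor},\alpha}^{\tau}}$ with $g_{n}:=\sum_{\boldsymbol{h}\in H_{n}}r(\boldsymbol{h},\gamma)^{-\tau}e^{-2\pi i\boldsymbol{h}\cdot\boldsymbol{x}}e^{2\pi i\boldsymbol{h}\cdot\textrm{-}}$ over a growing sequence of finite symmetric sets $H_{n}\uparrow\mathbb{Z}^{d}$; a direct computation gives $\|g_{n}\|_{\mathcal{N}_{\mathrm{kor},\alpha}^{\tau}}^{2}=\sum_{\boldsymbol{h}\in H_{n}}r(\boldsymbol{h},\gamma)^{-\tau}=g_{n}(\boldsymbol{x})$, so $v_{n}(\boldsymbol{x})=\bigl(\sum_{\boldsymbol{h}\in H_{n}}r(\boldsymbol{h},\gamma)^{-\tau}\bigr)^{1/2}\to\infty$ while $\|v_{n}\|_{\mathcal{N}_{\mathrm{kor},\alpha}^{\tau}}=1$, contradicting the continuity of $\delta_{\boldsymbol{x}}$.

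The computation itself is routine; the only points requiring care are the bookkeeping in the \emph{if} direction (identifying the functional determined by the sequence $(e^{2\pi i\boldsymbol{h}\cdot\boldsymbol{x}})_{\boldsymbol{h}}$ with genuine point evaluation, which is exactly where absolute convergence of the Fourier series of $\mathcal{N}_{\mathrm{kor},\alpha}^{\tau}$-functions enters) and, in the \emph{only if} direction, recording the standing hypothesis that the weights are not all zero so that divergence of $\sum_{h\geq1}h^{-\alpha\tau}$ genuinely forces $\|\delta_{\boldsymbol{x}}\|_{\mathcal{N}_{\mathrm{kor},\alpha}^{-\tau}}=\infty$.
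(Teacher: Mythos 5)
Your proof is correct and follows essentially the same route as the paper's: the forward direction is the identical Cauchy--Schwarz bound $|v(\boldsymbol{x})|\leq\bigl(\sum_{\boldsymbol{h}}r(\boldsymbol{h},\gamma)^{-\tau}\bigr)^{1/2}\|v\|_{\mathcal{N}_{\mathrm{kor},\alpha}^{\tau}}$ combined with the observation that the first factor is finite precisely when $\alpha\tau>1$, and your converse---whether via the dual-norm isometry of Proposition~\ref{prop:dual-isom} or via the explicit normalized test functions $g_{n}$---reaches the same contradiction the paper obtains through the Riesz representation theorem, namely that continuity of $\delta_{\boldsymbol{x}}$ would force $\sum_{\boldsymbol{h}}r(\boldsymbol{h},\gamma)^{-\tau}<\infty$. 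Your remark that one must assume not all weights $\gamma_{\mathfrak{u}}$ with $\mathfrak{u}\neq\emptyset$ vanish is a fair point of care that the paper leaves implicit.
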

	
	\begin{proof}
		For $v\in\mathcal{N}_{\mathrm{kor},\alpha}^{\tau}$ we have
		\begin{align*}
			|\delta_{\boldsymbol{x}}(v)| & =\biggl|\sum_{\boldsymbol{h}\in\mathbb{Z}^{d}}\langle v,\mathrm{e}^{2\pi i\boldsymbol{h}\cdot\textrm{-}}\rangle_{L^{2}([0,1]^{d})}\mathrm{e}^{2\pi i\boldsymbol{h}\cdot\textrm{\ensuremath{\boldsymbol{x}}}}\biggr|\\
			& \leq\Bigl(\sum_{\boldsymbol{h}\in\mathbb{Z}^{d}}r(\boldsymbol{h},\gamma)^{-\tau}\Bigr)^{1/2}\Bigl(\sum_{\boldsymbol{h}\in\mathbb{Z}^{d}}r(\boldsymbol{h},\gamma)^{\tau}|\langle v,\mathrm{e}^{2\pi i\boldsymbol{h}\cdot\textrm{-}}\rangle_{L^{2}([0,1]^{d})}|^{2}\Bigr)^{1/2}\\
			& =\Bigl(\sum_{\boldsymbol{h}\in\mathbb{Z}^{d}}r(\boldsymbol{h},\gamma)^{-\tau}\Bigr)^{1/2}\|v\|_{\mathcal{N}_{\mathrm{kor},\alpha}^{\tau}},
		\end{align*}
		but for $\alpha\tau>1$ the first factor is bounded:
		\[
		\sum_{\boldsymbol{h}\in\mathbb{Z}^{d}}r(\boldsymbol{h},\gamma)^{-\tau}=1+\sum_{\emptyset\neq\mathfrak{u}\subseteq\{1,\ldots,d\}}\gamma_{\mathfrak{u}}^{\tau}\Biggl(\sum_{h\in\mathbb{Z}\setminus\{0\}}\frac{1}{|h|^{\alpha\tau}}\Biggr)^{|\mathfrak{u}|}<\infty,
		\]
		and thus if $\tau>\frac{1}{\alpha}$, then $\delta_{\boldsymbol{x}}\in\mathcal{N}_{\mathrm{kor},\alpha}^{-\tau}$
		for any $\boldsymbol{x}\in[0,1]^{d}$. Suppose now $\tau\leq\frac{1}{\alpha}$.
		If $\delta_{\boldsymbol{x}}$ is continuous on $\mathcal{N}_{\mathrm{kor},\alpha}^{\tau}$,
		then from Riesz representation theorem there exists $\Phi_{\boldsymbol{x}}\in\mathcal{N}_{\mathrm{kor},\alpha}^{\tau}$
		such that
		\[
		v(\boldsymbol{x})=\sum_{\boldsymbol{h}\in\mathbb{Z}^{d}}r(\boldsymbol{h},\gamma)^{\tau}\langle\Phi_{\boldsymbol{x}},e^{2\pi i\boldsymbol{h}\cdot\text{-}}\rangle_{L^{2}([0,1]^{d})}\overline{\langle v,e^{2\pi i\boldsymbol{h}\cdot\text{-}}\rangle_{L^{2}([0,1]^{d})}}\text{ for any }v\in\mathcal{N}_{\mathrm{kor},\alpha}^{\tau},
		\]
		where we let $\overline{\langle v,e^{2\pi i\boldsymbol{h}\cdot\text{-}}\rangle_{L^{2}([0,1]^{d})}}:=\int_{[0,1]^{d}}v(\bsy)e^{2\pi i\boldsymbol{h}\cdot\text{\ensuremath{\boldsymbol{y}}}}\mathrm{d}\boldsymbol{y}$.
		Choosing $v(\boldsymbol{x})=e^{2\pi i\tilde{\boldsymbol{h}}\cdot\boldsymbol{x}}$,
		$\tilde{\boldsymbol{h}}\in\mathbb{Z}^{d}$ yields $e^{2\pi i\tilde{\boldsymbol{h}}\cdot\boldsymbol{x}}=r(\tilde{\boldsymbol{h}},\gamma)^{\tau}\langle\Phi_{\boldsymbol{x}},e^{2\pi i\tilde{\boldsymbol{h}}\cdot\text{-}}\rangle_{L^{2}([0,1]^{d})}$
		, and thus $1=r(\tilde{\boldsymbol{h}},\gamma)^{2\tau}|\langle\Phi_{\boldsymbol{x}},e^{2\pi i\tilde{\boldsymbol{h}}\cdot\text{-}}\rangle_{L^{2}([0,1]^{d})}|^{2}$.
		Hence, we obtain
		\[
		\sum_{\boldsymbol{h}\in\mathbb{Z}^{d}}r(\boldsymbol{h},\gamma)^{-\tau}=\sum_{\boldsymbol{h}\in\mathbb{Z}^{d}}r(\boldsymbol{h},\gamma)^{\tau}|\langle\Phi_{\boldsymbol{x}},e^{2\pi i\boldsymbol{h}\cdot\text{-}}\rangle_{L^{2}([0,1]^{d})}|^{2}.
		\]
		From $\Phi_{\boldsymbol{x}}\in\mathcal{N}_{\mathrm{kor},\alpha}^{\tau}$,
		the right hand side is convergent, whereas for $\tau\alpha\leq1$
		the left hand side is divergent, a contradiction. Hence, $\delta_{\boldsymbol{x}}$
		is not continuous on $\mathcal{N}_{\mathrm{kor},\alpha}^{\tau}$.
	\end{proof}
	Indeed, for $\alpha\tau>1$ the space $\mathcal{N}_{\mathrm{kor},\alpha}^{\tau}$
	is a reproducing kernel Hilbert space. To see this, let
	\begin{align*}
		K_{\alpha;\tau}^{\mathrm{kor}}(\boldsymbol{x},\boldsymbol{x}') & :=\sum_{\boldsymbol{h}\in\mathbb{Z}^{d}}r(\boldsymbol{h},\gamma)^{-\tau}e^{2\pi i\boldsymbol{h}\cdot(\boldsymbol{x}-\boldsymbol{x}')}\\
		& =1+\sum_{\emptyset\neq\mathfrak{u}\subseteq\{1,\ldots,d\}}\gamma_{\mathfrak{u}}^{\tau}K_{\alpha;\tau}(\boldsymbol{x}_{\mathfrak{u}},\boldsymbol{x}'_{\mathfrak{u}}),
	\end{align*}
	where
	\[
	K_{\alpha;\tau}^{\mathrm{kor}}(\boldsymbol{x}_{\mathfrak{u}},\boldsymbol{x}'_{\mathfrak{u}})=\prod_{j\in\mathfrak{u}}\Biggl(\sum_{h\in\mathbb{Z}\setminus\{0\}}\frac{e^{2\pi ih(x_{j}-x'_{j})}}{|h|^{\alpha\tau}}\Biggr).
	\]
	If $\alpha\tau>1$, the series is uniformly absolutely-convergent
	and thus the kernel is continuous on $[0,1]^{d}$; if $\alpha\tau\leq1$,
	then the series $K_{\alpha;\tau}(\boldsymbol{0},\boldsymbol{0})$
	is divergent. Hence, $K_{\alpha;\tau}^{\mathrm{kor}}(\boldsymbol{x},\boldsymbol{x}')$
	is a reproducing kernel if and only if $\alpha\tau>1$.
	
	We conclude this section with the following estimates.
	\begin{theorem}
		\label{thm:periodic-thm}Let $\alpha>1$. Fix $\delta\in(0,\tfrac{\alpha}{4})$
		and $\tau\in(\tfrac{1}{\alpha},1]$ arbitrarily.
		\rv{Let the weights $(\gamma_{\mathfrak{u}})_{\mathfrak{u}\subset\mathbb{N}}$ satisfy $\gamma_{\emptyset}=1$.}  
		Then, for $f\in\mathcal{N}_{\mathrm{kor},\alpha}$ we have
		\begin{align*}
			\bbE\Bigl[\int_{\rv{[0,1]^d}}&|f_{M,N;\boldsymbol{Y}}^{\lambda}(\rv{\bs{x}})-f(\rv{\bs{x}})|^{2}\mathrm{d}\rv{\bs{x}}\Bigr]\\
			&\leq C_{\rv{\alpha,\delta,\tau,d}}\biggl(\|f\|_{\mathrm{kor},\alpha}^{2}\frac{1}{N^{\alpha/2-2\delta}}+\lambda\|f\|_{\mathrm{kor},\alpha}^{2}+\frac{\|f\|_{L^{2}([0,1]^{d})}}{M\lambda^{\tau}}\biggr),
		\end{align*}
		where the constant $\rv{C_{\alpha,\delta,\tau,d}=C_{\alpha,\delta,
				\tau,d}\bigl((\gamma_{\mathfrak{u}})_{\mathfrak{u}\subset\mathbb{N}}\bigr)}>0$ depends on $\alpha$,
		$\delta$, $\tau$ but 
		\rv{can be bounded independently of $d$ provided that \eqref{eq:periodic-thm-summability-cond} holds.}
		Moreover,
		if $f$ is in the RKHS $(\mathcal{N}_{\widetilde{\mathrm{kor}},\alpha},\|\cdot\|_{\widetilde{\mathrm{kor}},\alpha})$
		associated with the kernel \eqref{eq:def-lat-Ktilde}, then the bound
		improves to
		\begin{align*}
			\bbE\Bigl[\int_\rv{[0,1]^d}|&f_{M,N;\boldsymbol{Y}}^{\lambda}(\rv{\bs{x}})-f(\rv{\bs{x}})|^{2}\mathrm{d}\rv{\bs{x}}\Bigr]\\
			&\leq
			\widetilde{C}_{\rv{\alpha,\delta,\tau,d}}\biggl(\|f\|_{\mathrm{kor},\alpha}^{2}\frac{1}{N^{\alpha/2-2\delta}}+\lambda^{2}\|f\|_{\mathrm{\widetilde{\mathrm{kor}}},\alpha}^{2}+\frac{\|f\|_{L^{2}([0,1]^{d})}}{M\lambda^{\tau}}\biggr),
		\end{align*}
		\rv{where the constant $\rv{\widetilde{C}_{\alpha,\delta,\tau,d}=\widetilde{C}_{\alpha,\delta,
					\tau,d}\bigl((\gamma_{\mathfrak{u}})_{\mathfrak{u}\subset\mathbb{N}}\bigr)}>0$ depends on $\alpha$,
			$\delta$, $\tau$ but 
			can be bounded independently of $d$ provided that the weights corresponding to the kernel \eqref{eq:def-lat-Ktilde} satisfy the summability condition analogous to \eqref{eq:periodic-thm-summability-cond}.}
	\end{theorem}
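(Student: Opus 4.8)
The plan is to specialize Theorem~\ref{thm:general-summary} to the kernel $K=K_\alpha^{\mathrm{kor}}$, whose representation \eqref{eq:K-series} is \eqref{eq:series-kor}: the index set is $\boldsymbol h\in\mathbb Z^d$, the eigenvalues are $\beta_{\boldsymbol h}=r(\boldsymbol h,\gamma)^{-1}$, and the orthonormal system is $\varphi_{\boldsymbol h}(\boldsymbol x)=\mathrm e^{2\pi i\boldsymbol h\cdot\boldsymbol x}$ (or, if a literally real orthonormal system is preferred, the associated real cosine/sine basis pairs; the constants below are insensitive to this choice). First I would check the hypotheses of Theorem~\ref{thm:general-summary}: $\tau\in(\tfrac1\alpha,1]\subset(0,1]$ by assumption; the preceding proposition (on $\delta_{\boldsymbol x}\in\mathcal N_{\mathrm{kor},\alpha}^{-\tau}$) gives $\delta_{\boldsymbol x}\in\mathcal N_{\mathrm{kor},\alpha}^{-\tau}$ for every $\boldsymbol x\in[0,1]^d$ precisely because $\tau>\tfrac1\alpha$, and since $\Delta_{\boldsymbol Y}=\frac1M\sum_{m=1}^M\delta_{Y_m}$ is a finite average of such functionals we obtain $\Delta_{\boldsymbol Y}\in\mathcal N_{\mathrm{kor},\alpha}^{-\tau}$; the remaining hypothesis $\langle K_\tau(\cdot,\cdot),f\rangle_{L^2([0,1]^d)}<\infty$ is verified in the variance step.

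For the bias, $K_\alpha^{\mathrm{kor}}$ is strictly positive definite (all Fourier weights $r(\boldsymbol h,\gamma)^{-1}$ are positive) and the rank-$1$ lattice points \eqref{eq:def-lat-pts} are distinct, so $\mathscr P_N=\mathscr I_N$ on $V_N$; Proposition~\ref{prop:interp-err-periodic} then gives $\|\mathscr P_N f-f\|_{L^2([0,1]^d)}^2\le C_{\alpha,\delta,d}^2\,\|f\|_{\mathrm{kor},\alpha}^2\,N^{-\alpha/2+2\delta}$, and the regularization term of \eqref{eq:general-result} is just $\lambda\|f\|_K^2=\lambda\|f\|_{\mathrm{kor},\alpha}^2$. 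For the sharper, quadratic-in-$\lambda$ variant I would note that the smoothness scale $\mathcal N_K^2$ coincides, with identical norm, with the RKHS $\mathcal N_{\widetilde{\mathrm{kor}},\alpha}$ of the kernel \eqref{eq:def-lat-Ktilde}: both norms are the Fourier norm with weights $r(\boldsymbol h,\gamma)^2$, since $\beta_{\boldsymbol h}^{-2}=r(\boldsymbol h,\gamma)^2$ while the kernel \eqref{eq:def-lat-Ktilde} carries Fourier weights $r(\boldsymbol h,\gamma)^{-2}$. Hence $f\in\mathcal N_{\widetilde{\mathrm{kor}},\alpha}$ implies $f\in\mathcal N_K^2$ with $\|f\|_{\mathcal N_K^2}=\|f\|_{\widetilde{\mathrm{kor}},\alpha}$, so the term $8\lambda^2\|f\|_{\widetilde{\mathrm{kor}},\alpha}^2$ of \eqref{eq:general-result-2} is available.

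For the variance, the auxiliary kernel of Theorem~\ref{thm:general-summary} is $K_\tau(\boldsymbol x,\boldsymbol x')=\sum_{\boldsymbol h\in\mathbb Z^d}r(\boldsymbol h,\gamma)^{-\tau}\mathrm e^{2\pi i\boldsymbol h\cdot(\boldsymbol x-\boldsymbol x')}=K_{\alpha;\tau}^{\mathrm{kor}}(\boldsymbol x,\boldsymbol x')$, and its diagonal is the \emph{constant} $K_\tau(\boldsymbol x,\boldsymbol x)=\sum_{\boldsymbol h\in\mathbb Z^d}r(\boldsymbol h,\gamma)^{-\tau}=1+\sum_{\emptyset\ne\mathfrak u\subseteq\{1,\dots,d\}}\gamma_{\mathfrak u}^{\tau}\bigl(2\zeta(\alpha\tau)\bigr)^{|\mathfrak u|}$, finite because $\alpha\tau>1$ (this is exactly the series appearing in the proof of the preceding proposition). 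Therefore $\langle K_\tau(\cdot,\cdot),f\rangle_{L^2([0,1]^d)}=\bigl(\sum_{\boldsymbol h\in\mathbb Z^d}r(\boldsymbol h,\gamma)^{-\tau}\bigr)\int_{[0,1]^d}f\,\mathrm d\boldsymbol x\le\bigl(\sum_{\boldsymbol h\in\mathbb Z^d}r(\boldsymbol h,\gamma)^{-\tau}\bigr)\|f\|_{L^2([0,1]^d)}$ by Cauchy--Schwarz on the unit-volume domain (equivalently, $\int f=1\le\|f\|_{L^2([0,1]^d)}$), which both confirms the last hypothesis and supplies the $\|f\|_{L^2([0,1]^d)}/(M\lambda^\tau)$ term. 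Collecting the three contributions, \eqref{eq:general-result} gives the first claimed bound with $C_{\alpha,\delta,\tau,d}:=\max\{C_{\alpha,\delta,d}^2,\,1,\,\sum_{\boldsymbol h\in\mathbb Z^d}r(\boldsymbol h,\gamma)^{-\tau}\}$, and \eqref{eq:general-result-2} the second with $\widetilde{C}_{\alpha,\delta,\tau,d}:=\max\{3C_{\alpha,\delta,d}^2,\,8,\,\sum_{\boldsymbol h\in\mathbb Z^d}r(\boldsymbol h,\gamma)^{-\tau}\}$.

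It remains to justify the dimension-independence: $C_{\alpha,\delta,d}$ is bounded uniformly in $d$ under \eqref{eq:periodic-thm-summability-cond} by Proposition~\ref{prop:interp-err-periodic}, and $\sum_{\boldsymbol h\in\mathbb Z^d}r(\boldsymbol h,\gamma)^{-\tau}=1+\sum_{\emptyset\ne\mathfrak u}\gamma_{\mathfrak u}^\tau(2\zeta(\alpha\tau))^{|\mathfrak u|}$ is a weight sum of the same type, bounded uniformly in $d$ under a summability condition of the same form as \eqref{eq:periodic-thm-summability-cond} (e.g.\ it is dominated by the left-hand side of \eqref{eq:periodic-thm-summability-cond} once $\delta$ is taken small enough relative to $\tau$ that $1/(\alpha-4\delta)\le\tau$ and the $\gamma_{\mathfrak u}$ are eventually at most $1$); for the sharper bound one invokes in addition the analogue of \eqref{eq:periodic-thm-summability-cond} for the weights of \eqref{eq:def-lat-Ktilde}. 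The argument is in the end largely bookkeeping; the two points that need genuine care are (i) recognizing that the effective variance kernel $K_\tau$ equals $K_{\alpha;\tau}^{\mathrm{kor}}$ and has a constant diagonal equal to the weight sum $1+\sum_{\emptyset\ne\mathfrak u}\gamma_{\mathfrak u}^\tau(2\zeta(\alpha\tau))^{|\mathfrak u|}$, and (ii) --- the part I expect to be the real obstacle --- keeping every constant uniform in the dimension after the interpolation estimate has been squared and combined with the variance term.
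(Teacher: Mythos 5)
Your argument is correct and follows essentially the same route as the paper: apply Theorem~\ref{thm:general-summary} with the Korobov eigensystem $\beta_{\boldsymbol h}=r(\boldsymbol h,\gamma)^{-1}$, bound the projection error via Proposition~\ref{prop:interp-err-periodic}, and control the variance constant by $\langle K_{\alpha;\tau}^{\mathrm{kor}}(\cdot,\cdot),f\rangle_{L^2}\le\|f\|_{L^2}\sum_{\boldsymbol h}r(\boldsymbol h,\gamma)^{-\tau}$, with the second claim obtained from \eqref{eq:general-result-2} after identifying $\mathcal N_K^2$ with $\mathcal N_{\widetilde{\mathrm{kor}},\alpha}$. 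Your write-up is in fact more explicit than the paper's (which states the key inequality and leaves the rest as ``analogous''), and your closing caveat about the $\tau$-dependent weight sum requiring its own summability is a fair observation that the paper also glosses over.
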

	\begin{proof}
		From \eqref{eq:general-result} in Theorem~\ref{thm:general-summary},
		Proposition~\ref{prop:interp-err-periodic} implies
		\begin{align*}
			\bbE\Bigl[\int_{[0,1]^{d}}&|f_{M,N;\boldsymbol{Y}}^{\lambda}(\rv{\bs{x}})-f(\rv{\bs{x}})|^{2}\mathrm{d}\rv{\bs{x}}\Bigr]\\
			&\leq C_{\alpha,\delta}^{2}\|f\|_{\mathrm{kor},\alpha}^{2}\frac{1}{N^{\alpha/2-2\delta}}+\lambda\|f\|_{\mathrm{kor},\alpha}^{2}+\frac{\langle K_{\alpha;\tau}^{\mathrm{kor}}(\cdot,\cdot),f\rangle_{L^{2}(\rv{[0,1]^d})}}{M\lambda^{\tau}},
		\end{align*}
		for any $\delta\in(0,\frac{\alpha}{4})$ and $\tau\in(\tfrac{1}{\alpha},1]$,
		where we note that
		\[
		\langle K_{\alpha;\tau}^{\mathrm{kor}}(\cdot,\cdot),f\rangle_{L^{2}([0,1]^{d})}\leq\|f\|_{L^{2}([0,1]^{d})}\sum_{\boldsymbol{h}\in\mathbb{Z}^{d}}r(\boldsymbol{h},\gamma)^{-\tau}<\infty.
		\]
		The proof for the second claim follows from an analogous argument
		using \eqref{eq:general-result-2}.
	\end{proof}
	Given that the density is sufficiently smooth, we obtain the MISE
	convergence rate arbitrarily close to $M^{-1/(1+\frac{1}{\alpha})}$,
	independently of the dimension $d$.
	\begin{corollary}
		\label{cor:final}Let $\alpha>1$. Fix $\delta\in(0,\tfrac{\alpha}{4})$
		and $\epsilon\in(0,1-\frac{1}{\alpha}]$ arbitrarily. 
		\rv{Suppose that weights
			$(\gamma_{\mathfrak{u}})_{\mathfrak{u}\subset\mathbb{N}}$ satisfy $\gamma_{\emptyset}=1$.} 
		Then, \rv{for $f\in\mathcal{N}_{\mathrm{kor},\alpha}$,} choosing
		$\lambda^{*}=M^{-1/(1+\frac{1}{\alpha}+\epsilon)}$, $N^{*}=\mathcal{O}(M^{1/(\frac{\alpha}{2}(1+\epsilon)+\frac{1}{2}-\frac{\delta}{2}(1+\frac{1}{\alpha}+\epsilon))})$,
		and $\tau^{*}=\frac{1}{\alpha}+\epsilon$ in Theorem~\ref{thm:periodic-thm}
		yields
		\[
		\bbE\Bigl[\int_\rv{[0,1]^d}|f_{M,N;\boldsymbol{Y}}^{\lambda}(\rv{\bs{x}})-f(\rv{\bs{x}})|^{2}\mathrm{d}\rv{\bs{x}}\Bigr]\leq C_{\alpha,\delta\rv{,d}}\|f\|_{\mathrm{kor},\alpha}^{2}M^{-1/(1+\frac{1}{\alpha}+\epsilon)}.
		\]
		Moreover, if $f$ is in the RKHS $(\mathcal{N}_{\widetilde{\mathrm{kor}},\alpha},\|\cdot\|_{\widetilde{\mathrm{kor}},\alpha})$
		associated with the kernel \eqref{eq:def-lat-Ktilde}, then by choosing     $\tau^{*}=\frac{1}{\alpha}+2\epsilon$,
		$\lambda^{*}=M^{-1/(2+\frac{1}{\alpha}+2\epsilon)}$,  and $N^{*}=\mathcal{O}(M^{1/(\frac{\alpha}{2}(1+\epsilon)+1-\frac{\delta}{2}(1+\frac{1}{2\alpha}+\epsilon))})$
		in Theorem~\ref{thm:periodic-thm}, we have a rate asymptotically
		faster in $\alpha$
		\[
		\bbE\Bigl[\int_\rv{[0,1]^d}|f_{M,N;\boldsymbol{Y}}^{\lambda}(\rv{\bs{x}})-f(\rv{\bs{x}})|^{2}\mathrm{d}\rv{\bs{x}}\Bigr]\leq C_{\alpha,\delta\rv{,d}}\|f\|_{\widetilde{\mathrm{kor}},\alpha}^{2}M^{-1/(1+\frac{1}{2\alpha}+\epsilon)}.
		\]
		\rv{The constant $C_{\alpha,\delta,d}=C_{\alpha,\delta,d}\bigl((\gamma_{\mathfrak{u}})_{\mathfrak{u}\subset\mathbb{N}}\bigr)>0$ can be bounded independently of $d$ provided that \eqref{eq:periodic-thm-summability-cond} holds.}
	\end{corollary}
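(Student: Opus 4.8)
The plan is to invoke Theorem~\ref{thm:periodic-thm} and then optimise the resulting three--term bound over the free parameters $\tau$, $\lambda$ and $N$ as functions of the sample size $M$. The first point to settle is that the prescribed value $\tau^{*}=\tfrac{1}{\alpha}+\epsilon$ is admissible for Theorem~\ref{thm:periodic-thm}, i.e.\ that $\tau^{*}\in(\tfrac{1}{\alpha},1]$: indeed $\tau^{*}>\tfrac{1}{\alpha}$ for every $\epsilon>0$, while $\tau^{*}\le1$ holds precisely because $\epsilon\le1-\tfrac{1}{\alpha}$ --- this is exactly where the stated range of $\epsilon$ is used. The strict inequality $\tau^{*}>\tfrac{1}{\alpha}$ is also what makes $\sum_{\boldsymbol{h}\in\mathbb{Z}^{d}}r(\boldsymbol{h},\gamma)^{-\tau^{*}}<\infty$ and hence $\langle K_{\alpha;\tau^{*}}^{\mathrm{kor}}(\cdot,\cdot),f\rangle_{L^{2}([0,1]^{d})}<\infty$, so the hypotheses of Theorem~\ref{thm:periodic-thm} are met; and $\lambda^{*}=M^{-1/(1+\frac{1}{\alpha}+\epsilon)}\in(0,1]$ as required there.

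With $\tau^{*}$ fixed, Theorem~\ref{thm:periodic-thm} yields
\[
\mathrm{MISE}\le C_{\alpha,\delta,\tau^{*},d}\Bigl(\|f\|_{\mathrm{kor},\alpha}^{2}N^{-(\alpha/2-2\delta)}+\lambda\|f\|_{\mathrm{kor},\alpha}^{2}+\frac{\|f\|_{L^{2}([0,1]^{d})}}{M\lambda^{\tau^{*}}}\Bigr),
\]
and I would first balance the last two summands: their orders in $M$ coincide when $\lambda\asymp M^{-1/(1+\tau^{*})}$, the common value being $M^{-1/(1+\tau^{*})}=M^{-1/(1+\frac{1}{\alpha}+\epsilon)}$ (for the variance term one uses $-1+\tfrac{\tau^{*}}{1+\tau^{*}}=-\tfrac{1}{1+\tau^{*}}$); this is the bottleneck rate. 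It then suffices to choose $N^{*}$ large enough that the interpolation/bias term $\|f\|_{\mathrm{kor},\alpha}^{2}N^{-(\alpha/2-2\delta)}$ is also at most $\|f\|_{\mathrm{kor},\alpha}^{2}M^{-1/(1+\frac{1}{\alpha}+\epsilon)}$, which happens for $N^{*}$ of the polynomial order dictated by $(\tfrac{\alpha}{2}-2\delta)(1+\tfrac{1}{\alpha}+\epsilon)$ as made explicit in the statement; taking $N^{*}$ to be a prime of that order --- possible by Bertrand's postulate --- makes Proposition~\ref{prop:interp-err-periodic} applicable. Summing the three contributions, and absorbing $\|f\|_{L^{2}([0,1]^{d})}$ into $C\|f\|_{\mathrm{kor},\alpha}^{2}$ (legitimate since $f$ is a density, so $\|f\|_{\mathrm{kor},\alpha}\ge1$ and $\|f\|_{L^{2}([0,1]^{d})}$ is bounded by a $d$-uniform constant times $\|f\|_{\mathrm{kor},\alpha}$), gives the first estimate. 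The resulting constant is $C_{\alpha,\delta,\tau^{*},d}$ up to such factors, hence --- exactly as for Theorem~\ref{thm:periodic-thm} --- can be bounded independently of $d$ under the summability condition \eqref{eq:periodic-thm-summability-cond}, which simultaneously controls $\sum_{\boldsymbol{h}}r(\boldsymbol{h},\gamma)^{-\tau^{*}}$ uniformly in $d$.

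For the sharper bound I would repeat the computation starting from the second estimate of Theorem~\ref{thm:periodic-thm}, whose regularisation term is $\lambda^{2}\|f\|_{\widetilde{\mathrm{kor}},\alpha}^{2}$. Balancing $\lambda^{2}$ against $M^{-1}\lambda^{-\tau}$ now gives $\lambda\asymp M^{-1/(2+\tau)}$ with common rate $M^{-2/(2+\tau)}=M^{-1/(1+\tau/2)}$; taking $\tau^{*}=\tfrac{1}{\alpha}+2\epsilon$ (admissible provided $\epsilon$ is small enough that $\tau^{*}\le1$, and chosen so that $1+\tfrac{\tau^{*}}{2}=1+\tfrac{1}{2\alpha}+\epsilon$), $\lambda^{*}=M^{-1/(2+\frac{1}{\alpha}+2\epsilon)}$ and $N^{*}$ of the corresponding polynomial order makes all three terms at most $M^{-1/(1+\frac{1}{2\alpha}+\epsilon)}$, which is the second claim; the $d$-independence of the constant now requires the summability condition for the kernel~\eqref{eq:def-lat-Ktilde} analogous to \eqref{eq:periodic-thm-summability-cond}.

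I do not anticipate a genuine obstacle: once Theorem~\ref{thm:periodic-thm} is available the argument is pure exponent bookkeeping. The two points that require care are (i) keeping $\tau^{*}\in(\tfrac{1}{\alpha},1]$ throughout --- this is where the hypothesis on $\epsilon$ enters, and it is what keeps the variance term finite so that Proposition~\ref{prop:var-estim} (via Theorem~\ref{thm:periodic-thm}) applies --- and (ii) realising $N^{*}$ as a prime of the stated polynomial order, which only changes the constant.
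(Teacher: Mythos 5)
Your proposal is correct and follows essentially the same route as the paper, which states Corollary~\ref{cor:final} as a direct consequence of Theorem~\ref{thm:periodic-thm} obtained by exactly this exponent balancing: equating $\lambda$ (resp.\ $\lambda^{2}$) with $M^{-1}\lambda^{-\tau}$, checking $\tau^{*}\in(\tfrac{1}{\alpha},1]$, and taking $N^{*}$ a prime large enough that the projection term is dominated. Your added observations --- that $\|f\|_{L^{2}}$ can be absorbed into $\|f\|_{\mathrm{kor},\alpha}^{2}$ because a density has $\|f\|_{\mathrm{kor},\alpha}\ge 1$, and that the second choice $\tau^{*}=\tfrac{1}{\alpha}+2\epsilon$ needs $\epsilon$ small enough to keep $\tau^{*}\le 1$ --- are both pertinent and consistent with what the paper leaves implicit.
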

	\rv{
		It turns out that the rate established above is almost minimax when $\alpha$ is an even integer. 
		For simplicity, let us consider the equal weights  $\gamma_{\mathfrak{u}}=1$ for all $\mathfrak{u}\subset\{1,\dots,d\}$.  
		For general weights, the following inequality still holds true, up to a constant depending on $(\gamma_{\mathfrak{u}})_{\mathfrak{u}\subset\{1,\dots,d\}}$. 
		
		Indeed, from the classical asymptotic minimax rate for the Korobov spaces  $\mathcal{N}_{\mathrm{kor},\alpha,1}$ for $d=1$ and $\alpha$ even, 
		(see \cite[Example 1]{MR711898}, also \cite{Efromovich.S_2010_OrthogonalSeriesDensity}, where we note that the definition of $\alpha$ in \cite{MR711898} is different from ours by a factor of $2$), for $M$ sufficiently large we have 
		\begin{align*}
			C_\alpha M^{-1/(1+\frac1{\alpha})}
			&\leq
			\inf_{\hat{f}}
			\sup_{\substack{f\in\mathcal{N}_{\mathrm{kor},\alpha,1} \\ \|f\|_{\mathrm{kor},\alpha,1}\leq1}}
			\bbE\Bigl[\int_\rv{[0,1]}|\hat{f}(x)-f(x)|^{2}\Bigr]\mathrm{d}x\\
			&\leq
			\inf_{\hat{f}}
			\sup_{\substack{f\in\mathcal{N}_{\mathrm{kor},\alpha} \\ \|f\|_{\mathrm{kor},\alpha}\leq1}}
			\bbE\Bigl[\int_\rv{[0,1]^d}|\hat{f}(\bs{x})-f(\bs{x})|^{2}\Bigr]\mathrm{d}\bs{x}\\
			&\leq
			\sup_{\substack{f\in\mathcal{N}_{\mathrm{kor},\alpha} \\ \|f\|_{\mathrm{kor},\alpha}\leq1}}
			\bbE\Bigl[\int_\rv{[0,1]^d}|f_{M,N;\boldsymbol{Y}}^{\lambda}(\bs{x})-f(\bs{x})|^{2}\Bigr]\mathrm{d}\bs{x}\leq C_{\alpha,\delta,d} M^{-1/(1+\frac1{\alpha}+\epsilon)},
		\end{align*}
		where the infimum is taken over all possible estimates $\hat{f}=\hat{f}(Y_1,\dots,Y_M)$. 
		Here, 
		the first inequality follows from \cite[Example 1]{MR711898},   
		in the second inequality we used that $f\in\mathcal{N}_{\mathrm{kor},\alpha,1}$ can be seen as a function $\tilde{f}\in\mathcal{N}_{\mathrm{kor},\alpha}$ on $[0,1]^d$ depending only on one variable with $\|f\|_{\mathrm{kor},\alpha,1}=\|\tilde{f}\|_{\mathrm{kor},\alpha}$ for $\alpha /2\in\mathbb{N}$ with equal weights,  
		and the last inequality is from Corollary~\ref{cor:final}. 
		Hence, we conclude that the rate $\mathcal{O}(M^{-1/(1+\frac{1}{\alpha}+\epsilon)})$ as in Corollary~\ref{cor:final} is asymptotically minimax up to $\epsilon>0$ for $f\in\mathcal{N}_{\mathrm{kor},\alpha}$. 
		Similarly, for $f\in\mathcal{N}_{\widetilde{\mathrm{kor}},\alpha}$ the rate $\mathcal{O}(M^{-1/(1+\frac{1}{2\alpha})})$ is asymptotically minimax, at least for $\alpha$ integer.
	}
	\section{Numerical results\label{sec:numerical}}
	
	We consider the density function
	\begin{equation}
		f(\boldsymbol{y})=\prod_{j=1}^{d}\biggl(1+\frac{1}{j^{4}}B_{4}(y_{j})\biggr),\quad\boldsymbol{y}\in[0,1]^{d},\label{eq:NR-exact}
	\end{equation}
	with respect to the uniform measure. Notice that, from $B_{4}>-1$
	on $[0,1]$ and $\int_{0}^{1}B_{4}(t)\mathrm{d}t=0$ the function
	$f$ is indeed a density function. The dimensions we consider are
	$d=6$ and $d=15$. 
	
	\sloppy{We generate samples $Y_{m}\sim\int_{\cdot}f(\boldsymbol{y})\mathrm{d}\boldsymbol{y}$
		using the Acceptance-Rejection method\rv{; see for example \cite[Section 2.2]{Glasserman.P_2003_Book_MCinFinancialEngineering}.}
		The generating vector for the
		lattice points are obtained by the component-by-component algorithm
		in \cite{Cools.R_etal_2020_LatticeAlgorithmsMultivariate}. The experiments
		were implemented in Julia 1.6.0 \cite{BezansonEtAl.J_2017_JuliaFreshApproach}.
		We approximate the MISE by}
	\begin{align*}
		\bbE\biggl[&\int_\rv{[0,1]^d}|f_{M,N;\boldsymbol{Y}}^{\lambda}(\rv{\bs{x}})-f(\rv{\bs{x}})|^{2}\mathrm{d}\rv{\bs{x}}\biggr]\\
		&\approx\frac{1}{S}\sum_{k=1}^{S}\frac{1}{100N}\sum_{n=1}^{N}\sum_{\ell=1}^{100}|f_{M,N;\boldsymbol{Y}^{(k)}}^{\lambda}(\{\boldsymbol{x}_{n}+\boldsymbol{p}_{\ell}\})-f(\{\boldsymbol{x}_{n}+\boldsymbol{p}_{\ell}\})|^{2},
	\end{align*}
	where $\boldsymbol{x}_{n}$, $n=1,\dots,N$ \rv{are} the lattice points
	used to determine the approximation space, $\boldsymbol{p}_{\ell}$,
	$\ell=1,\dots,100$ are a set of Sobol' points generated by the Julia
	function \texttt{SobolSeq}~\cite{Johnson.S_SobolModuleJulia},
	and the braces around the vector of length $d$ indicate that each
	component of the vector is to be replaced by its fractional part.
	This choice of evaluation points admits a fast evaluation; see \cite{KaarniojaEtAl.V_2021_FastApproximationPeriodic}
	for more details. The number $S$ of Monte Carlo replications $\boldsymbol{Y}^{(1)},\dots,\boldsymbol{Y}^{(S)}$
	is chosen such that the estimated confidence interval is smaller than
	the estimated MISE at least by a factor of $10$. As a kernel, we
	consider the Korobov kernel with product weights $\gamma_{\mathfrak{u}}=\prod_{j\in\mathfrak{u}}\frac{1}{j^{\alpha}}$
	as in \eqref{eq:prod-Kor}, with \rv{$\alpha\in \{2,4\}$}.  
	\rv{Observe that $f$ as in \eqref{eq:NR-exact} satisfies $f\in\mathcal{N}_{\mathrm{kor},\alpha}$ with $\alpha\in  (1,4]$.}
	
	Fig.~\ref{fig:d6lambig} shows a decay of MISE for $N=5,7,11$, with 
	\rv{various values of $\lambda$ between $0.8$ and $0.01$ }
	for $d=6$. Firstly, we report that increasing $N$ beyond $N=11$
	did not help decreasing the error. We interpret this as the projection
	error in Theorem~\ref{thm:periodic-thm} being negligible, and we
	focus, therefore, on the other two error terms in Theorem~\ref{thm:periodic-thm}.
	For the sample size $M$ large ($M=10^{5},10^{6}$), we observe that
	the error decreases as $\lambda$ becomes smaller. This supports Theorem~\ref{thm:periodic-thm},
	where the term $\mathcal{O}(1/(M\lambda^{\tau}))$, $\tau\in(1/\alpha,1)$
	with large $M$ would be negligible, and the term $\mathcal{O}(\lambda)$
	would be dominant. For smaller values of $M$, from Theorem~\ref{thm:periodic-thm}
	we expect that the term $\mathcal{O}(1/(M\lambda^{\tau}))$ is dominant
	relative to $\mathcal{O}(\lambda)$, and that as $M$ increases the
	MISE decays. This is precisely what we observe in Fig.~\ref{fig:d6lambig}.
	We also see that the values of $\alpha$ affect the decay in $M$
	only up to a constant, which again indicates the validity of our theory.
	We report a similar behaviour of the error for $d=15$ in Fig.~\ref{fig:d15lambig},
	which further supports our theory.
	\begin{figure}
		\includegraphics[width=0.99\textwidth]{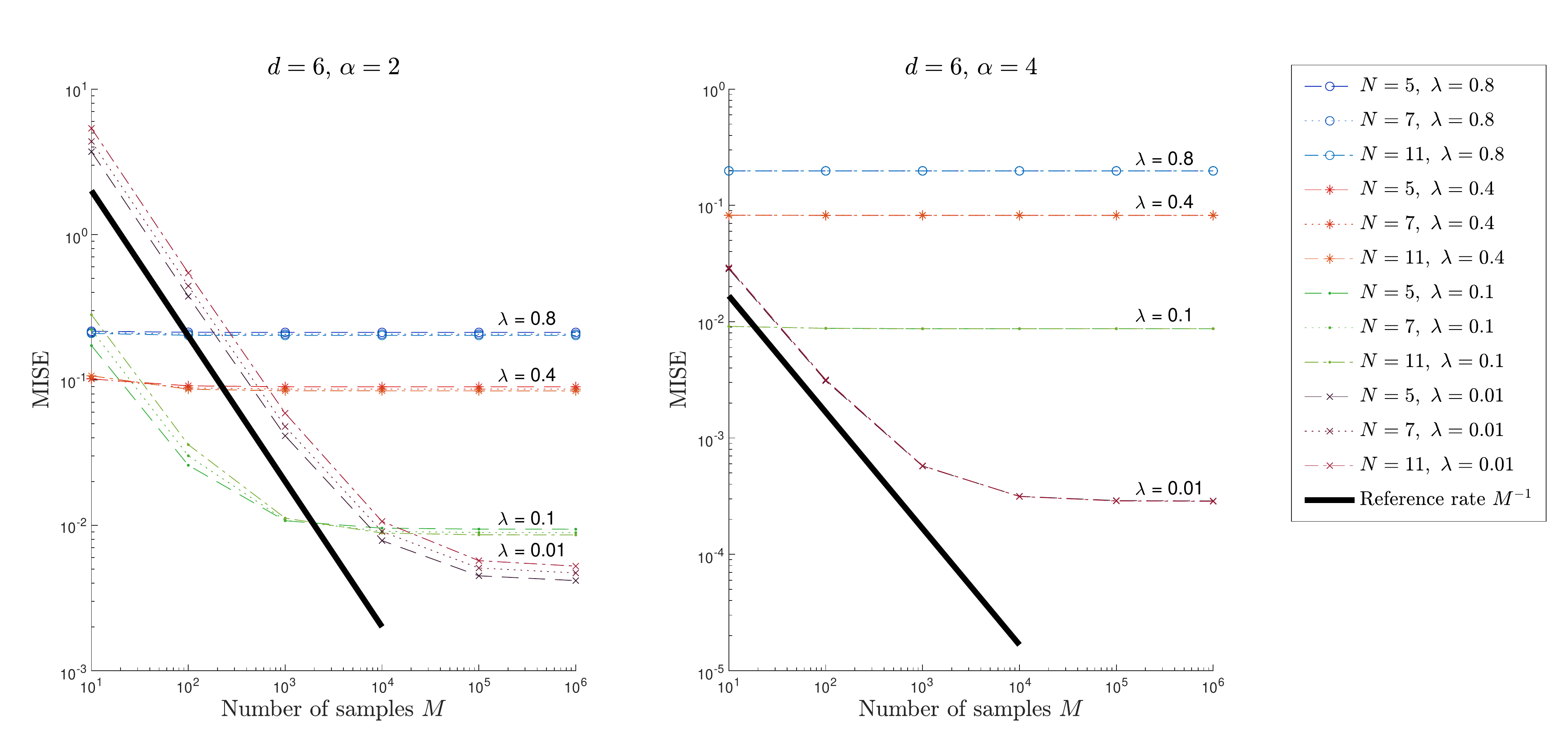}
		
		\caption{Plot of MISE with $d=6$, $\alpha=2\text{ (left)},4\text{ (right)}$
			, and $\lambda=0.8,\dots,0.01$ with varying sample size~$M$}
		
		\label{fig:d6lambig}
	\end{figure}
	\begin{figure}
		\includegraphics[width=0.99\textwidth]{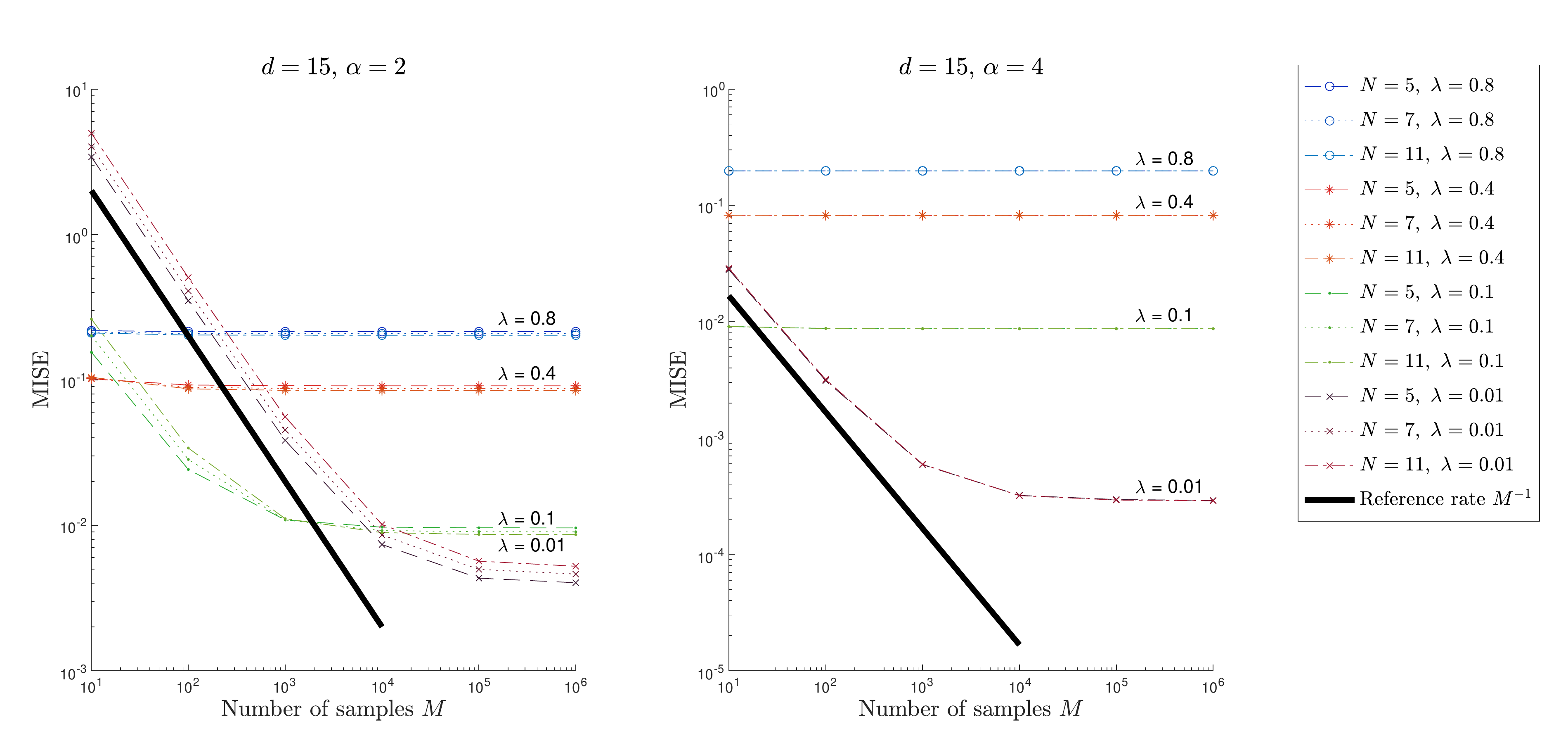}
		
		\caption{Plot of MISE with $d=15$, $\alpha=2\text{ (left)},4\text{ (right)}$,
			and $\lambda=0.8,\dots,0.01$ with varying sample size~$M$}
		
		\label{fig:d15lambig}
	\end{figure}
	
	Next, we will see the behaviour of MISE for even smaller values of
	$\lambda$. Figs.~\ref{fig:d6lamsmall} and~\ref{fig:d15lamsmall}
	show a decay of MISE for 
	\rv{various values of $\lambda$ between $0.1$ and $0.0001$}
	with $N=11$
	and varying sample size $M$. Like in the previous case, we observe
	that the MISE decays in $M$ with rate $M^{-1}$ until it reaches
	a plateau. However, unlike in the previous case, we see that even
	on the plateau the error may be larger for smaller $\lambda$. To
	understand this observation, we next see the behaviour of the MISE
	in $\lambda$, with other parameters being fixed.
	\begin{figure}
		\includegraphics[width=0.99\textwidth]{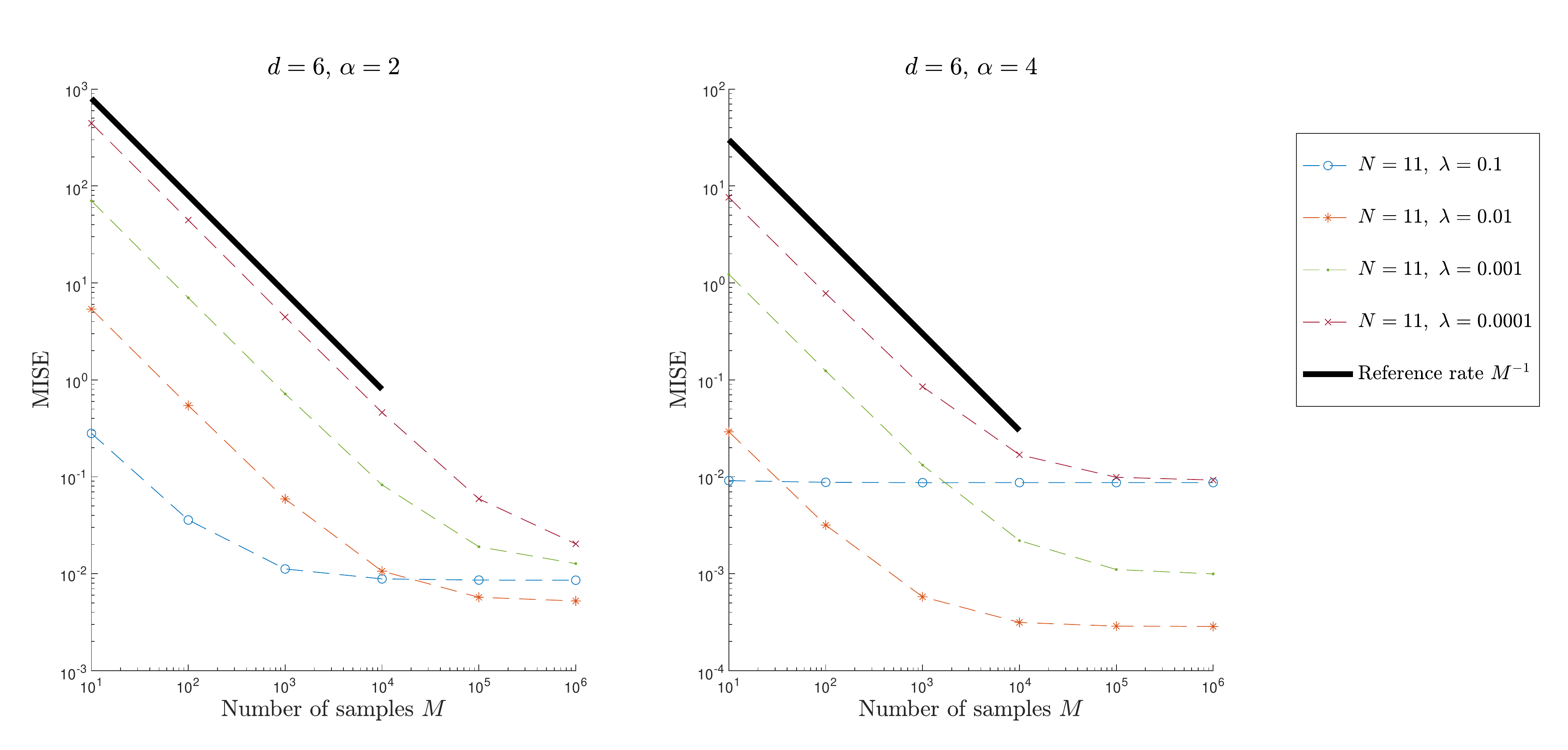}
		
		\caption{Plot of MISE for $d=6$, $\alpha=2\text{ (left)},4\text{ (right)}$,
			and $\lambda=0.1,\dots,0.0001$ with varying sample size~$M$.}
		
		\label{fig:d6lamsmall}
	\end{figure}
	
	\begin{figure}
		\includegraphics[width=0.99\textwidth]{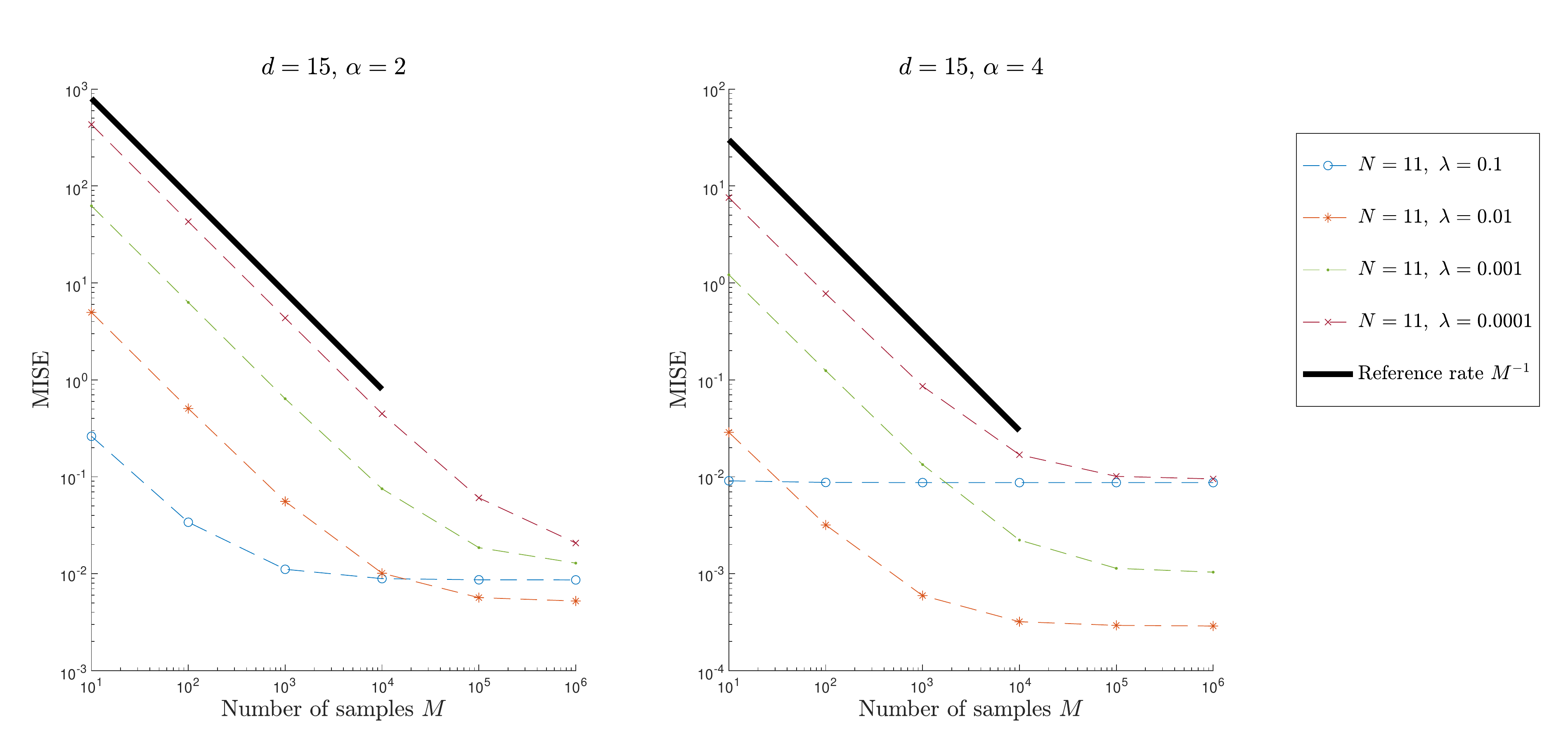}
		
		\caption{Plot of MISE for $d=15$, $\alpha=2\text{ (left)},4\text{ (right)}$,
			and $\lambda=0.1,\dots,0.0001$ with varying sample size~$M$.}
		
		\label{fig:d15lamsmall}
	\end{figure}
	
	Figs.~\ref{fig:changelam-d6} and \ref{fig:changelam-d15} show
	MISE with $N=11$ and $M=10^{4}$, with $\lambda=0.7^{k}$, $k=0,\dots,70$
	for $d=6$ and $15$.
	
	For $d=6$ and $\alpha=2$, from $\lambda=1$ to $\lambda=0.7^{9}\approx0.04$,
	we see the MISE decays almost quadratically in $\lambda$; see the
	graphs on the left in Fig.~\ref{fig:changelam-d6}. Then, the MISE
	increases as $\lambda$ becomes small until it reaches a plateau.
	Although the rate of this increase in the regime of $\lambda=0.7^{10}\approx0.03,\dots,0.7^{25}\approx0.001$
	is faster than the rate $\lambda^{-1/\alpha}$ that is anticipated
	by Theorem~\ref{thm:periodic-thm}, asymptotically the observed rate
	of increase is better than this theoretical rate. This observation
	may suggest that our theoretical rate is not sharp, but it is consistent
	with our theory. For $\alpha=4$ we observe a similar behaviour in
	the graphs on the right in Fig.~\ref{fig:changelam-d6}: the MISE
	decays, and then increases until it reaches a plateau. This observation
	again supports Theorem~\ref{thm:periodic-thm}. For $d=15$, we observe
	similar results, which further supports our theory; see Fig.~\ref{fig:changelam-d15}.
	
	\begin{figure}
		\includegraphics[width=0.99\textwidth]{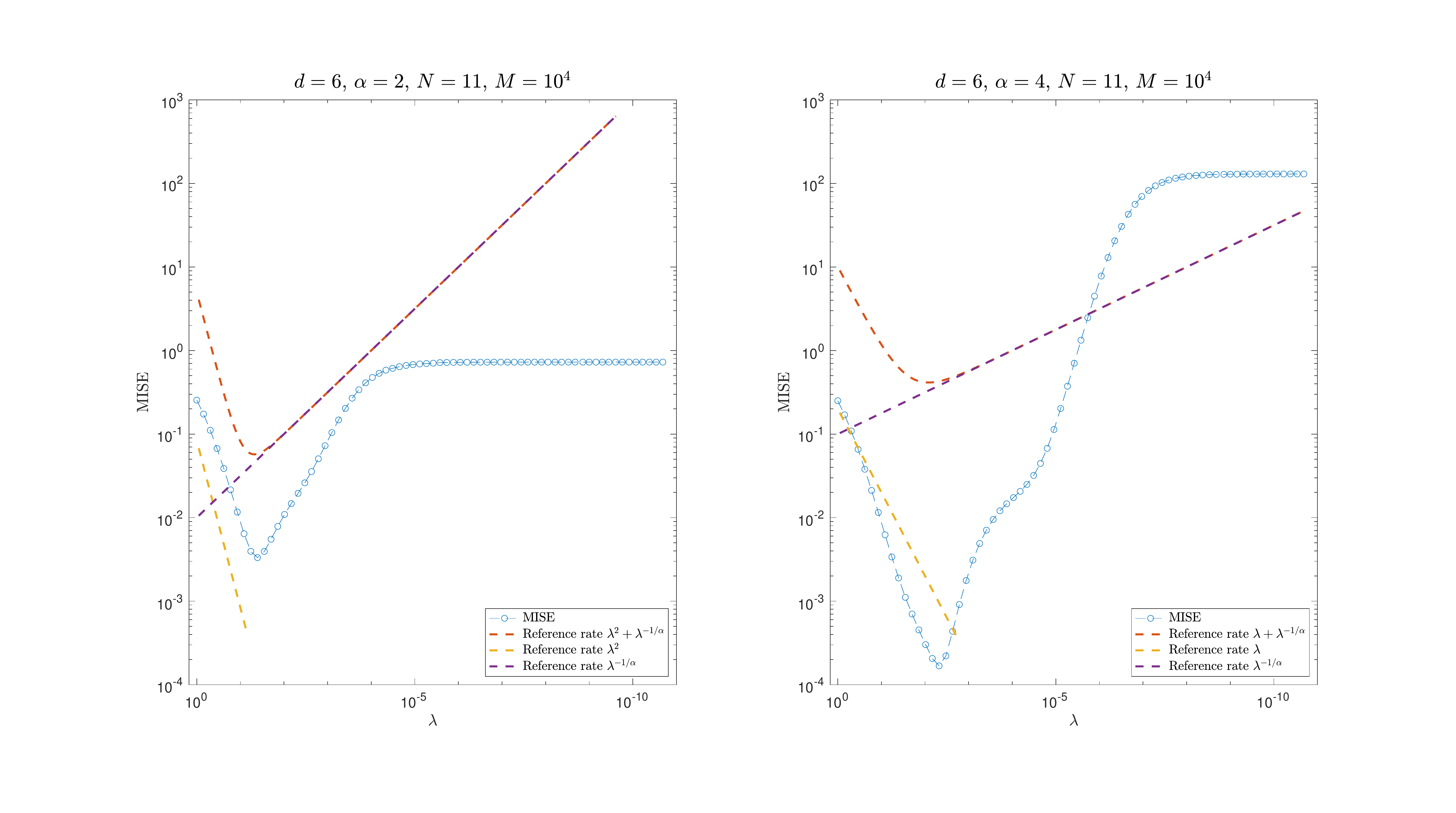}
		
		\caption{Plot of MISE for $d=6$, $\alpha=2\text{ (left)},4\text{ (right)}$,
			$N=11$, and $M=10^{4}$ with $\lambda=0.7^{k},k=0,\dots,70$.}
		
		\label{fig:changelam-d6}
	\end{figure}
	
	\begin{figure}
		\includegraphics[width=0.99\textwidth]{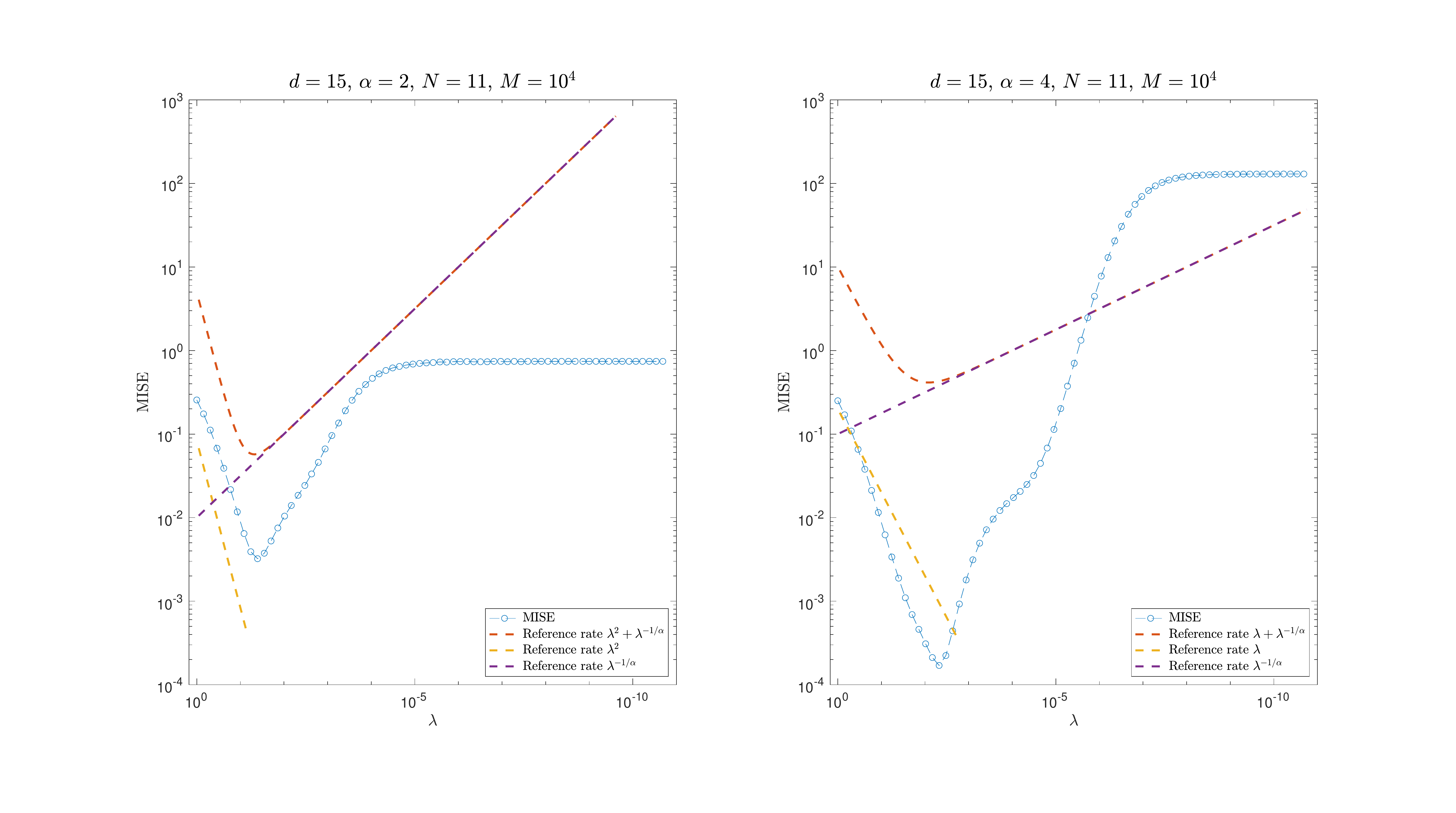}
		
		\caption{Plot of MISE for $d=15$, $\alpha=2\text{ (left)},4\text{ (right)}$,
			$N=11$, and $M=10^{4}$ with $\lambda=0.7^{k},k=0,\dots,70$.}
		
		\label{fig:changelam-d15}
	\end{figure}
	
	Finally, we show in Figs.~\ref{fig:changelam(M)-d6} and \ref{fig:changelam(M)-d15}
	the error decay for $\lambda=\lambda(M,\alpha)$ depending on $M$
	and $\alpha$ as in Corollary~\ref{cor:final}. For $\alpha=2$,
	we let $\lambda=1000M^{-\frac{1}{1+1/\alpha}}$, and for $\alpha=4$
	we let $\lambda=5000M^{-\frac{1}{1+1/\alpha}}$, with $M=10^{k}$,
	$k=3,\dots,7$. In both Figs., we observe that the MISE decays with
	rate essentially $M^{-\frac{1}{1+1/\alpha}}$, which supports our
	theory.
	\begin{figure}
		\includegraphics[width=0.99\textwidth]{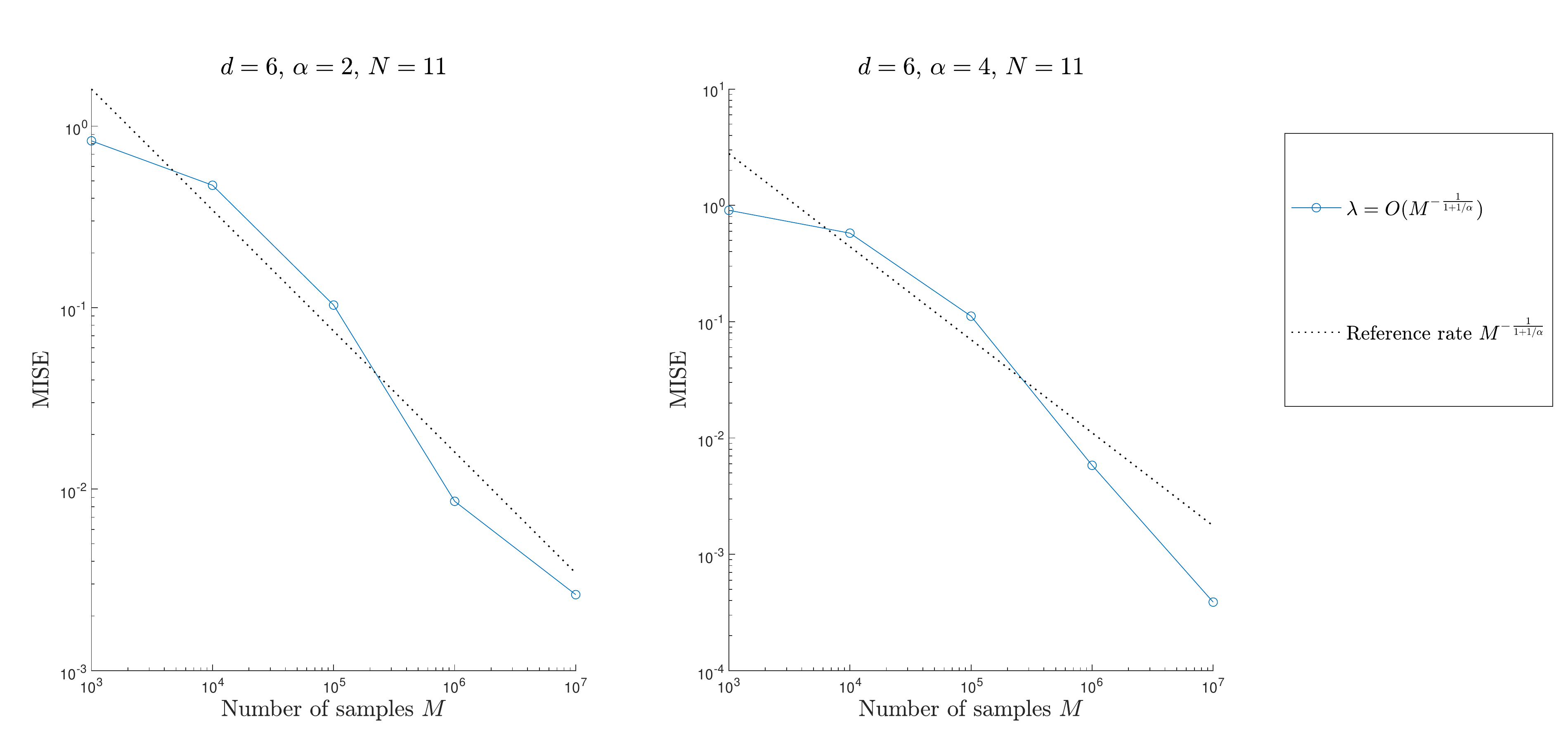}
		
		\caption{Plot of MISE for $d=6$ with $N=11$, $\alpha=2\text{ (left)},4\text{ (right)}$,
			and $\lambda=O(M^{-\frac{1}{1+1/\alpha}})$.}
		
		\label{fig:changelam(M)-d6}
	\end{figure}
	
	\begin{figure}
		\includegraphics[width=0.99\textwidth]{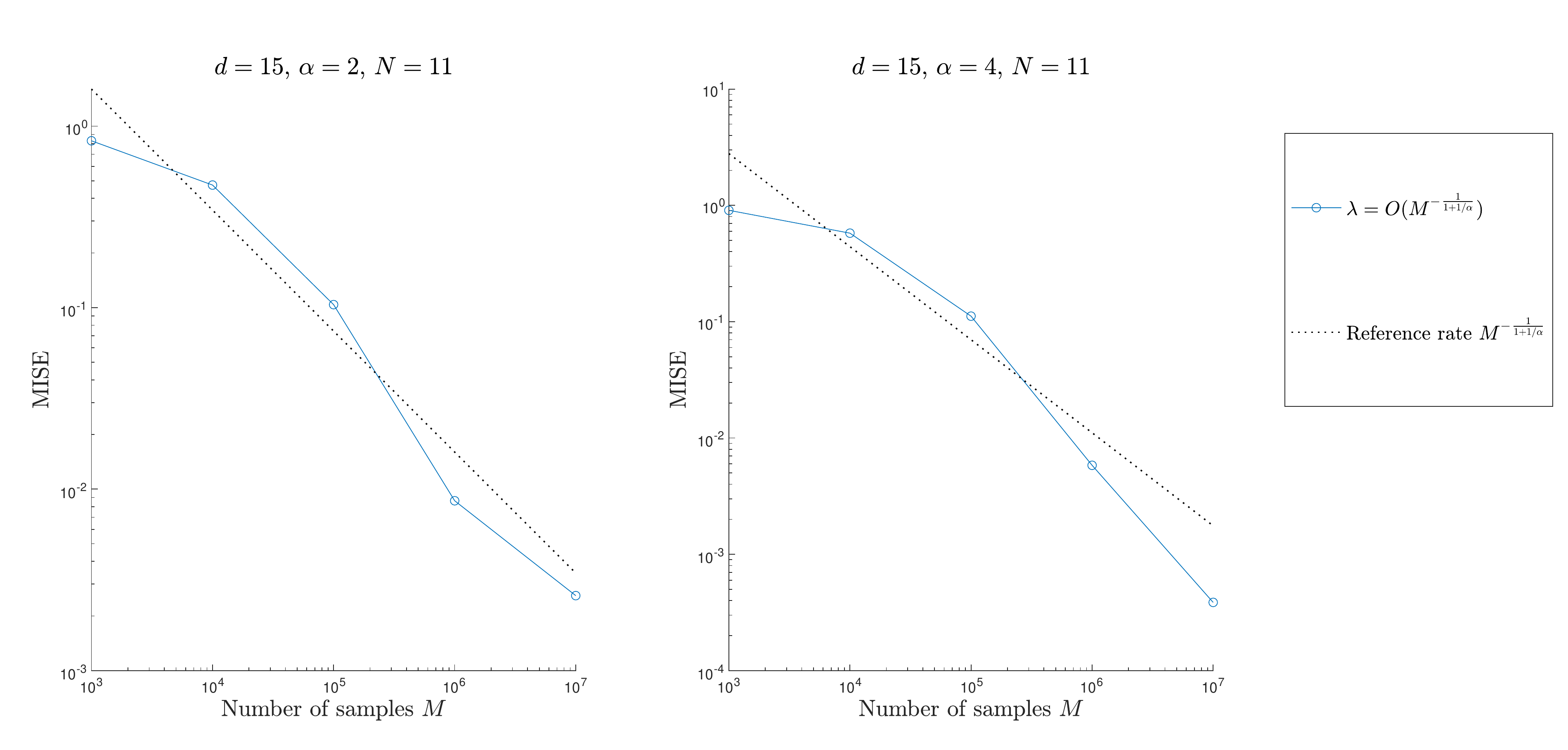}
		
		\caption{Plot of MISE for $d=15$ with $N=11$, $\alpha=2\text{ (left)},4\text{ (right)}$,
			and $\lambda=O(M^{-\frac{1}{1+1/\alpha}})$.}
		
		\label{fig:changelam(M)-d15}
	\end{figure}
	
	\section{Conclusions\label{sec:Conclusions}}
	
	In this paper, we considered a kernel method to approximate probability
	density functions. A major contribution of this paper is to have established
	a dimension-independent error convergence rate. To show this, we first
	developed a theory in a general RKHS setting. We then applied this
	theory to the Korobov-space setting, in which we established a dimension-independent
	error decay rate. 
	\rv{Therein, the implied constant is also dimension independent under suitable assumptions on the underlying weights.} 
	By choosing parameters suitably, we obtained the rate in terms of the MISE arbitrarily close to $1$ in the sample
	size, given that the target density is in the weighted Korobov space
	of arbitrary order. 
	\rv{For the Korobov spaces whose order is an even integer, the rate obtained is asymptotically minimax.} 
	Numerical results supported the theory.

	In closing, we discuss some possible future directions. An important point in the kernel density literature concerns the choice of the bandwidth (see for instance \cite{GoldenshlugerEtAl.A_2011_BandwidthSelectionKernel,GoldenshlugerEtAl.A_2014_AdaptiveMinimaxDensity} for robust adaptive selection, independent on the smoothness of the underlying density)
	In the present setting, our kernel is polynomial and not necessarily localised, so the problem of bandwidth selection does not really apply to our case. On the other hand, our results assume that the target density is in the RKHS corresponding to the kernel we use. Thus, a relevant question is how one should choose the weights in the kernel, as well as the regularization parameter $\lambda$, when the regularity class of the target density is not known. This topic is left for future work. 
	
	Another important point to discuss is the periodicity assumption. The full error estimates with respect to
	the sample size $M$ as in Corollary~\ref{cor:final} presented in
	this paper is limited to the weighted Korobov space setting, in which
	the target density is assumed to admit a smooth periodic extension.
	Needless to say, there is scope for further work to generalise our
	error bounds. In this regard, we stress that the results we established
	in Section~\ref{sec:gen-theory} are general and applicable to many
	other kernel functions. As we demonstrated in Section \ref{sec:Example},
	to obtain precise estimates as in Corollary~\ref{cor:final} for
	such kernels, it suffices to obtain a kernel interpolation estimate.
	Moreover, in view of the optimality of kernel interpolation, it suffices
	to obtain a sampling-based approximation that gives a small error
	in the corresponding reproducing kernel Hilbert space. In turn, this
	paper provides further motivations on fully discrete approximation
	methods.
	
	\section*{Acknowledgments}
	This research includes computations using the facilities of the Scientific IT and Application Support Center of EPFL.
	\printbibliography
\end{document}